\newtheorem{theorem}{Theorem}
\newtheorem{lemma}[theorem]{Lemma}
\newtheorem{corollary}[theorem]{Corollary}
\newtheorem{prop}[theorem]{Proposition}
\newtheorem{defn}[theorem]{Definition}
\newtheorem{remark}[theorem]{Remark}
\newtheorem{conjecture}[theorem]{Conjecture}
\newtheorem{assumption}[theorem]{Assumption}
\numberwithin{equation}{section}
\newcommand{\integer}{\mathbb{Z}}
\newcommand{\rat}{\mathbb{Q}}
\newcommand{\real}{\mathbb{R}}
\newcommand{\N}{\mathbb{N}}
\newcommand{\Z}{\mathbb{Z}}
\newcommand{\R}{\mathbb{R}}
\newcommand{\C}{\mathbb{C}}
\newcommand{\cpx}{\mathbb{C}}
\newcommand{\bb}{\boldsymbol{b}}
\newcommand{\bv}{\boldsymbol{v}}
\newcommand{\bx}{\boldsymbol{x}}
\newcommand{\be}{\boldsymbol{e}}
\newcommand{\consti}{\mathbf{i}\,}
\newcommand{\conste}{\mathbf{e}}
\newcommand{\WT}[1]{\widetilde{#1}}
\newcommand{\proj}{\mathbb{P}}
\newcommand{\CM}{\mathcal{M}}
\newcommand{\CE}{\mathcal{E}}
\newcommand{\CL}{\mathcal{L}}
\newcommand{\CP}{\mathcal{P}}
\newcommand{\bD}{\mathcal{D}}
\newcommand{\BA}{\mathbb{A}}
\newcommand{\Dom}{\mathrm{Dom}}
\begin{document}

\title[Lagrangian Floer superpotentials and crepant resolutions]{Lagrangian Floer superpotentials and crepant resolutions for toric orbifolds}

\author[Chan]{Kwokwai Chan}
\address{Department of Mathematics\\ The Chinese University of Hong Kong\\ Shatin\\ Hong Kong}
\email{kwchan@math.cuhk.edu.hk}

\author[Cho]{Cheol-Hyun Cho}
\address{Department of Mathematical Sciences, Research institute of Mathematics\\ Seoul National University\\ San 56-1, Shinrimdong\\ Gwanakgu \\Seoul 47907\\ Korea}
\email{chocheol@snu.ac.kr}

\author[Lau]{Siu-Cheong Lau}
\address{Department of Mathematics\\ Harvard University\\ One Oxford Street\\ Cambridge \\ MA 02138\\ USA}
\email{s.lau@math.harvard.edu}

\author[Tseng]{Hsian-Hua Tseng}
\address{Department of Mathematics\\ Ohio State University\\ 100 Math Tower, 231 West 18th Ave. \\ Columbus \\ OH 43210\\ USA}
\email{hhtseng@math.ohio-state.edu}

\date{\today}

\begin{abstract}
We investigate the relationship between the Lagrangian Floer superpotentials for a toric orbifold and its toric crepant resolutions. More specifically, we study an open string version of the crepant resolution conjecture (CRC) which states that the Lagrangian Floer superpotential of a Gorenstein toric orbifold $\mathcal{X}$ and that of its toric crepant resolution $Y$ coincide after analytic continuation of quantum parameters and a change of variables. Relating this conjecture with the closed CRC, we find that the change of variable formula which appears in closed CRC can be explained by relations between open (orbifold) Gromov-Witten invariants. We also discover a geometric explanation (in terms of virtual counting of stable orbi-discs) for the specialization of quantum parameters to roots of unity which appears in Y. Ruan's original CRC \cite{Ruan06}. We prove the open CRC for the weighted projective spaces $\mathcal{X}=\proj(1,\ldots,1,n)$ using an equality between open and closed orbifold Gromov-Witten invariants. Along the way, we also prove an open mirror theorem for these toric orbifolds.
\end{abstract}

\maketitle

\tableofcontents

\section{Introduction}

The {\em crepant resolution conjecture} (abbreviated as CRC) \cite{Ruan06, Bryan-Graber09, CIT09, Coates-Ruan} has attracted a lot of attention in the last ten years, and much evidence has been found, especially in toric cases \cite{Perroni07, BGP08, BMP09, CIT09, Coates09, CIJ}. This conjecture arises from string theory: if $\mathcal{X}$ is a Gorenstein orbifold and $Y$ is a crepant resolution, then $\mathcal{X}$ and $Y$ correspond to two large radius limit points (or cusps) in the so-called {\em stringy K\"ahler moduli space} $\CM_A$ which parametrizes a family of topological string theories (A-model) whose chiral rings should be given by the small quantum (orbifold) cohomology ring of the corresponding target space near each cusp. Hence it is natural to expect that the quantum cohomology rings $QH_\mathrm{orb}^*(\mathcal{X})$ and $QH^*(Y)$ are closely related.

Ruan \cite{Ruan06} wrote down the first precise statement which asserts that $QH^*(Y)$ is isomorphic to $QH_\mathrm{orb}^*(\mathcal{X})$ after analytic continuation of the quantum parameters of $Y$ and specializing the exceptional ones to roots of unity. Later, Bryan and Graber \cite{Bryan-Graber09} proposed a significant strengthening of this, asserting that, if $\mathcal{X}$ satisfies a {\em Hard Lefschetz condition}, then even the {\em big} quantum cohomology rings are isomorphic after analytic continuation and specialization of quantum parameters. At around the same time, Coates, Iritani and Tseng \cite{CIT09} (see also Coates-Ruan \cite{Coates-Ruan}) presented a rather different and more general formulation of the conjecture using Givental's Lagrangian cones and symplectic formalism \cite{Coates-Givental07,Givental04}. Their conjecture is expected to hold even without the Hard Lefschetz assumption on $\mathcal{X}$. See Subsection \ref{closedCRC_vs_openCRC} and Conjecture \ref{closed_CRC} below for more details.

In this paper, we study how the Lagrangian Floer superpotential of a Gorenstein toric orbifold and that of its toric crepant resolution are related under analytic continuation of quantum parameters. We propose an open string version of the CRC in the toric case. A compact toric manifold $Y$ has a Landau-Ginzburg (LG) mirror \cite{hori00}, which can be constructed using Lagrangian Floer theory (due to Fukaya, Oh, Ohta and Ono \cite{FOOO1}). More precisely, the {\em Lagrangian Floer superpotential} $W^{LF}_Y$, which is part of the data in the LG mirror of $Y$, is defined by virtual counting of stable holomorphic discs in $Y$ bounded by Lagrangian torus fibers of the moment map. In general, the coefficients of $W^{LF}_Y$, which are generating functions of genus zero open Gromov-Witten (GW) invariants, are only formal power series with values in the Novikov ring $\Lambda_0:=\{\sum_{k=1}^\infty C_k T^{\lambda_k} \mid C_k\in\rat, \lambda_k\in\R_{\geq0}, \lim_{k\to\infty}\lambda_k=\infty \}$, where $T$ is a formal parameter. In case these formal power series are convergent, this produces a family of holomorphic functions on the algebraic torus $(\C^*)^n$ ($n=\mathrm{dim}(Y)$) over a neighborhood $U_Y$ of the cusp in $\CM_A$ corresponding to $Y$.

Recently, the second author and Poddar \cite{CP} developed Lagrangian Floer theory for moment map fibers in compact toric orbifolds. They classified all holomorphic orbifold discs in a compact toric orbifold $\mathcal{X}$ bounded by these Lagrangian tori and defined open orbifold GW invariants by virtual counting of stable holomorphic orbi-discs. In particular, they defined a Lagrangian Floer superpotential $W$ using the counting of smooth holomorphic discs, and also a bulk deformed superpotential $W^{\frak b}$. The latter is defined by the virtual counting of stable orbifold discs where the bulk deformation $\frak b$ (i.e. insertion at interior orbifold marked points) is given by fundamental classes of twisted sectors.

In this paper, we define the Lagrangian Floer superpotential $W^{LF}_\mathcal{X}$ of $\mathcal{X}$, which is different from $W$ or $W^{\frak b}$, as a formal power series whose coefficients are generating functions of certain open orbifold GW invariants. Assuming convergence, this gives a family of holomorphic functions on $(\C^*)^n$ over a neighborhood $U_\mathcal{X}$ of the cusp in $\CM_A$ corresponding to $\mathcal{X}$.

We can now state our open CRC (same as Conjecture \ref{open_CRC}):
\begin{conjecture}[Open Crepant Resolution Conjecture]
Let $X$ be a toric variety with at worse Gorenstein quotient singularities. Let $\mathcal{X}$ be the canonical toric orbifold with $X$ as its coarse moduli space (see \cite[Section 7]{BCS}). And let $Y$ be a toric crepant resolution of $X$.  The flat coordinates on the K\"ahler moduli of $\mathcal{X}$ and $Y$ are denoted as $q$ and $Q$ respectively.  Let $l$ be the dimension of the K\"ahler moduli of $\mathcal{X}$ (which is equal to that of $Y$).

The Lagrangian Floer superpotential $W^{LF}_\mathcal{X}(q)$ of $\mathcal{X}$ is a Laurent series over the Novikov ring in $q$.  Similarly the Lagrangian Floer superpotential $W^{LF}_Y(Q)$ of $Y$ is a Laurent series over the Novikov ring in $Q$.  Then there exists
\begin{enumerate}
\item $\epsilon>0$;
\item a coordinate change $Q(q)$, which is a holomorphic map $(\Delta(\epsilon)-\real_{\leq 0})^{l} \to (\cpx^\times)^l$, and $\Delta(\epsilon)$ is an open disc of radius $\epsilon$ in the complex plane;
\item a choice of analytic continuation of coefficients of the Laurent series $W^{LF}_Y(Q)$ to the target of the holomorphic map $Q(q)$,
\end{enumerate}
such that $W^{LF}_Y(Q(q))$ defines a holomorphic family of Laurent series over a small neighborhood of $q = 0$, and
\begin{equation*}
W^{LF}_\mathcal{X}(q)=W^{LF}_Y(Q(q)).
\end{equation*}
\end{conjecture}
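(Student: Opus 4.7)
The plan is to reduce the open CRC to the closed CRC via an \emph{open mirror theorem} on each side. Concretely, I would (i) express $W^{LF}_\mathcal{X}(q)$ as the combinatorial Hori-Vafa potential $W^{HV}_\mathcal{X}$ evaluated at the inverse mirror map of $\mathcal{X}$, and analogously write $W^{LF}_Y(Q)$ as $W^{HV}_Y$ evaluated at the inverse mirror map of $Y$; (ii) invoke the already-established closed CRC between $\mathcal{X}$ and $Y$ to identify the two mirror maps, after analytic continuation, via a change of variable $Q(q)$; (iii) check that under this same change of variable the coefficients of $W^{HV}_Y(Q)$ attached to exceptional rays are precisely the analytic continuations of the orbifold correction terms in $W^{HV}_\mathcal{X}(q)$. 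Assembling these three ingredients yields the desired identity.

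\textbf{Step (i): open mirror theorems.} For the resolution $Y$, semi-Fano toric mirror theorems (due to Fukaya-Oh-Ohta-Ono and Chan-Lau-Leung-Tseng) already supply $W^{LF}_Y(Q)=W^{HV}_Y(\check{q}(Q))$ with $\check{q}$ the SYZ/inverse mirror map of $Y$. The analogous statement for $\mathcal{X}$ requires, for each disc class bounded by a moment-map Lagrangian with a single insertion at a twisted sector, equating its generating function of open orbifold GW invariants (a count of stable orbi-discs) with a component of the $J$-function of $\mathcal{X}$. In the case $\mathcal{X}=\proj(1,\dots,1,n)$ singled out in the paper, one expects to obtain this through the equality between open and closed orbifold GW invariants advertised in the abstract; combined with a toric-orbifold mirror theorem on the closed side, this gives $W^{LF}_\mathcal{X}(q)=W^{HV}_\mathcal{X}(\check{q}_\mathcal{X}(q))$.

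\textbf{Steps (ii) and (iii): matching via closed CRC.} The Coates-Iritani-Tseng form of the closed CRC produces an analytic continuation that identifies the $J$-functions of $\mathcal{X}$ and $Y$; extracting the first-order components yields an identification of the two mirror maps up to a holomorphic coordinate change $Q(q)$ on a domain of the form $(\Delta(\epsilon)\setminus\R_{\leq 0})^l$. This coordinate change is precisely the one asked for in the conjecture, and automatically implements Ruan's ``roots of unity'' prescription on the exceptional directions. Having fixed $Q(q)$, the remaining task is to analytically continue the Hori-Vafa potential: monomials in $Q$ indexed by exceptional rays of $Y$ become, under $Q(q)$, generating functions in $q$ that must be identified with the virtual orbi-disc counts sitting in $W^{HV}_\mathcal{X}$. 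This is where the discussion of Subsection \ref{closedCRC_vs_openCRC} does the real work: relations between open (orbifold) GW invariants of $\mathcal{X}$ and of $Y$ force the coefficient-by-coefficient match.

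\textbf{Main obstacle.} The hard part is step (i) on the orbifold side. Virtual counts of stable orbi-discs are not in general computable by toric localization alone, and the open-closed equality that works for $\proj(1,\dots,1,n)$ exploits the fact that all orbifold twisted sectors are supported at a single fixed point, which collapses the open moduli to a closed one. For a general Gorenstein toric orbifold this reduction fails, and one would need either a wall-crossing/degeneration argument for open orbifold invariants or a direct extension of the toric mirror theorem to the open setting; the latter is also where the (so far only formal) convergence of $W^{LF}_\mathcal{X}$ would have to be addressed before the holomorphic statement of the conjecture makes sense.
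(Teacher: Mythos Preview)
Your strategy matches the paper's own approach exactly: this is the content of Theorem \ref{closedCRC_implies_openCRC}, which shows that the open mirror theorems for $\mathcal{X}$ and $Y$ together with the closed CRC (in the Coates--Iritani--Tseng form, with $\mathbb{U}(I_\mathcal{X})=I_Y$) imply the open CRC, and the diagram in the Introduction summarizes precisely your steps (i)--(iii). You have also correctly located the gap: the statement is a \emph{conjecture}, and the paper does not prove it in general---the orbifold open mirror theorem (Conjecture \ref{open_mirror_thm}) is only established here for $\proj(1,\ldots,1,n)$ via the open-closed equality of Theorem \ref{THM_open_closed}, so the full argument goes through only in that case (Theorem \ref{open_CRC_wPn}).
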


Indeed the above conjecture is part of the global picture given by the stringy K\"ahler moduli which is not mathematically defined yet.  The stronger conjectural global statement (for toric varieties) may be formulated as follows: There exists
\begin{enumerate}
\item a manifold $\CM_A$, the so-called stringy K\"ahler moduli;
\item a holomorphic family of Laurent series $W^{LF}$ over $\CM_A$;
\item a coordinate patch $(U_\mathcal{X},q)$ of $\CM_A$ such that $q^*(W^{LF})$ equals to the Lagrangian Floer superpotential of $\mathcal{X}$;
\item a coordinate patch $(U_Y,Q)$ of $\CM_A$ such that $Q^*(W^{LF})$ equals to the Lagrangian Floer superpotential of $Y$.
\end{enumerate}

Since we do not have a global construction of the stringy K\"ahler moduli space $\CM_A$ and also the chiral rings over points far away from the cusps, analytic continuation is required in all the crepant resolution conjectures. In practice, in order to prove the open or closed CRC, one first constructs the $B$-model moduli space $\CM_B$ (in toric cases, this is simply given by the toric orbifold associated to the secondary fan of the crepant resolution $Y$). Mirror symmetry will identify the neighborhoods $U_\mathcal{X}$ and $U_Y$ with neighborhood of certain cusps in $\CM_B$. Since the $B$-model moduli space is global, one can then perform analytic continuation over $\CM_B$, and (by applying mirror symmetry again) obtain the change of variables.

A remarkable feature of our open CRC is that it predicts equalities between generating functions of open GW invariants for $\mathcal{X}$ and $Y$ after analytic continuation and a change of variable. See the equalities \eqref{openmatch1}, \eqref{openmatch2} and the discussion after Conjecture \ref{open_CRC} at the end of Subsection \ref{sec:formulation_openCRC}.

Our open CRC also sheds new light on the study of the closed CRC. First of all, one may infer from our open CRC that the change of variable formula needed in the closed CRC actually originates from the geometric data of open GW invariants of an orbifold and its crepant resolution (by the equalities \eqref{openmatch1}, \eqref{openmatch2}). Furthermore, we discover a geometric explanation for the specialization of quantum parameters to roots of unity which appeared in Ruan's conjecture. Namely, we show that the specialization corresponds precisely to the vanishing of coefficients of $W^{LF}_Y$ which count stable holomorphic discs meeting the exceptional divisors in $Y$. See Theorem \ref{thm_specialization} for the precise statement and Subsection \ref{sec:specialization} for more details.

Indeed, we expect that the open and closed crepant resolution conjectures are closely related to each other since the Jacaobian ring of the Lagrangian Floer superpotential should be isomorphic to the small quantum cohomology ring. For toric manifolds, this was proved by Fukaya, Oh, Ohta and Ono in their recent work \cite{FOOO10b}:\footnote{In fact, they proved a much stronger result: the big quantum cohomology ring of $Y$ is isomorphic to the Jacobian ring of the {\em bulk-deformed} Lagrangian Floer superpotential.}
$$QH^*(Y)\cong Jac(W^{LF}_Y).$$
We plan to investigate the analogous story on the orbifold side in a subsequent work. What we expect to be true is the following:
\begin{conjecture} \label{conj:iso}
There is an isomorphism
$$QH_\mathrm{orb}^*(\mathcal{X})\cong Jac(W^{LF}_\mathcal{X}).$$
\end{conjecture}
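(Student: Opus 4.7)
The plan is to adapt the strategy of Fukaya--Oh--Ohta--Ono \cite{FOOO10b} (which establishes the manifold case $QH^*(Y)\cong Jac(W^{LF}_Y)$) to the toric orbifold setting, making essential use of the orbifold Floer theory of Cho--Poddar \cite{CP} and the classification of holomorphic orbi-discs therein. Concretely, I would construct a Kodaira--Spencer type map
\begin{equation*}
\mathrm{ks}: QH_\mathrm{orb}^*(\mathcal{X}) \longrightarrow Jac(W^{LF}_\mathcal{X})
\end{equation*}
defined through open-closed orbifold Gromov--Witten invariants: for $\alpha\in H^*_\mathrm{orb}(\mathcal{X})$, set $\mathrm{ks}(\alpha)=\sum_\beta n_{\alpha,\beta}\, z^\beta$, where the sum runs over basic (smooth or orbi-) disc classes and $n_{\alpha,\beta}$ counts stable orbi-discs of class $\beta$ with one interior insertion of $\alpha$ and boundary on a moment-map Lagrangian torus fiber. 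Then I would show this map is a ring homomorphism and an isomorphism.

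The key steps, in order, are as follows. First, verify that $\mathrm{ks}$ is well-defined, that is, the formal series lands in the Jacobian ring modulo the partial-derivative ideal $\langle z_i\partial_{z_i}W^{LF}_\mathcal{X}\rangle$; this uses the divisor-like behaviour of the toric boundary classes on the disc moduli, extended to twisted sectors. Second, establish the ring property by an open-closed degeneration (splitting a disc with two interior insertions into a configuration producing either the quantum orbifold product on the closed side or a Jacobian relation on the open side), which in the manifold case is a consequence of the $A_\infty$ structure on the bulk-deformed Floer complex and should extend to the orbifold $A_\infty$ structure of \cite{CP}. Third, prove bijectivity by matching bases: the orbifold cohomology decomposes according to box elements in the Borisov--Chen--Smith sense, and Cho--Poddar's classification of basic orbi-discs produces precisely one disc class per box element together with the usual smooth disc classes, giving a rank match $\dim QH_\mathrm{orb}^*(\mathcal{X})=\dim Jac(W^{LF}_\mathcal{X})$; surjectivity of $\mathrm{ks}$ then follows from hitting all the relevant Laurent monomials in the Jacobian ring, and rank equality upgrades this to an isomorphism.

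The main obstacle will be the moduli-theoretic input required to make the above rigorous. One must construct a suitable Kuranishi structure on the open-closed orbi-disc moduli spaces (with boundary decompositions compatible with both factorizations needed to derive the ring property), prove the appropriate gluing and forgetful-compatibility statements in the orbifold category, and either establish convergence of the relevant generating series in a neighbourhood of the large radius limit or develop the theory formally over $\Lambda_0$. A secondary but nontrivial subtlety is matching the particular superpotential $W^{LF}_\mathcal{X}$ defined in this paper (whose coefficients are specific combinations of open orbifold GW invariants) with an object to which the FOOO-style open-closed argument applies directly; this may require reinterpreting $W^{LF}_\mathcal{X}$ as a bulk deformation $W^{\mathfrak b}$ of Cho--Poddar for a canonical choice of $\mathfrak b$ built from fundamental classes of twisted sectors, after which the remaining argument parallels \cite{FOOO10b} with orbifold modifications.
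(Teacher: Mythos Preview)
The statement you are attempting to prove is labelled as a \emph{conjecture} in the paper and is not proven there. Immediately after stating Conjecture~\ref{conj:iso}, the authors write only that it ``should follow from an open mirror theorem'' (Conjecture~\ref{open_mirror_thm}) combined with the Gonzalez--Woodward result $QH^*_\mathrm{orb}(\mathcal{X})\cong Jac(W^{HV}_\mathcal{X})$, and that ``alternatively, we expect that Conjecture~\ref{conj:iso} can be proven by following the strategy of \cite{FOOO10b}.'' No proof is given; both routes are left as expectations.

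Your proposal is a reasonable and detailed fleshing-out of the second suggested route (the FOOO Kodaira--Spencer approach), and you correctly identify the main technical obstacles: constructing compatible Kuranishi structures on open-closed orbi-disc moduli with the right boundary factorizations, and reconciling the specific $W^{LF}_\mathcal{X}$ of this paper with a bulk-deformed potential $W^{\mathfrak b}$ to which the FOOO machinery applies. These are genuine difficulties, not mere bookkeeping, and the paper does not claim to have resolved them. Note also that the paper's \emph{other} suggested route---via the open mirror theorem---is itself conjectural in general (Conjecture~\ref{open_mirror_thm} is only established here for $\mathbb{P}(1,\ldots,1,n)$), so neither approach yields a proof within the paper. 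In short: there is nothing to compare your argument against, because the paper offers no proof; your sketch is in the spirit of one of the two strategies the authors themselves propose, but it remains a program rather than a proof, as does theirs.
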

Combining these two isomorphisms with the open CRC, we conclude that
$$QH^*(Y)\cong QH_\mathrm{orb}^*(\mathcal{X}),$$
via analytic continuation in quantum parameters and a change of variables. If we specialize the exceptional parameters to suitable values (not necessarily roots of unity), this will imply the ``quantum corrected" version of Ruan's conjecture as formulated by Coates and Ruan \cite{Coates-Ruan}. See Subsection \ref{sec:specialization} below for more details.

By the recent result \cite[Theorem 1.16]{G-W} of Gonzalez and Woodward, the quantum cohomology ring of $\mathcal{X}$ is isomorphic to the (appropriately defined) Jacobian ring of the potential function $W^{HV}$ defined in Definition \ref{W^HV} below. Therefore Conjecture \ref{conj:iso} should follow from an open mirror theorem (see Conjecture \ref{open_mirror_thm} below), which compares $W^{HV}$ and $W^{LF}$. Alternatively, we expect that Conjecture \ref{conj:iso} can be proven by following the strategy of \cite{FOOO10b}.

In this paper, we will prove the open CRC for the weighted projective spaces $\mathcal{X}=\proj(1,\ldots,1,n)$:
\begin{theorem}[=Theorem \ref{open_CRC_wPn}]\label{open_CRC_wPn_intro}
For the weighted projective space $\mathcal{X}=\proj(1,\ldots,1,n)$ whose crepant resolution is given by $Y=\proj(K_{\proj^{n-1}}\oplus\mathcal{O}_{\proj^{n-1}})$, the open CRC is true.
\end{theorem}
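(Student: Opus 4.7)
The plan is to prove the open CRC by factoring through the Landau-Ginzburg B-model on both sides: I would establish an \emph{open mirror theorem} for $\mathcal{X} = \proj(1,\ldots,1,n)$ identifying $W^{LF}_\mathcal{X}$ with the Hori-Vafa potential $W^{HV}_\mathcal{X}$ (after applying an orbifold mirror map), invoke the corresponding open mirror theorem for $Y = \proj(K_{\proj^{n-1}}\oplus \mathcal{O}_{\proj^{n-1}})$ (this $Y$ is a toric Calabi-Yau compactification, so $W^{LF}_Y$ is already within reach of the open mirror theorem for toric Calabi-Yau manifolds), and then compare the two Hori-Vafa potentials via analytic continuation on the stringy K\"ahler moduli space $\CM_B$ associated to the secondary fan of the pair $(\mathcal{X}, Y)$.

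The crucial intermediate step is to put $W^{LF}_\mathcal{X}$ in closed form. By Cho-Poddar's classification, every basic holomorphic orbi-disc bounded by a Lagrangian moment-map fiber of $\mathcal{X}$ is either a smooth basic disc (one per toric irreducible divisor, giving the classical monomial part of $W^{LF}_\mathcal{X}$) or a twisted basic disc labelled by a twisted sector of age less than two. For $\mathcal{X}=\proj(1,\ldots,1,n)$ all such twisted sectors are concentrated at the single $\Z/n$ orbifold point and are indexed by $k/n$ with $1 \leq k \leq n-1$. I would then apply the advertised open/closed equality: up to explicit combinatorial factors, each one-pointed orbi-disc virtual count equals a one-point descendant closed orbifold Gromov-Witten invariant of $\mathcal{X}$, which in turn is read off from the known $J$-function of $\proj(1,\ldots,1,n)$ via the Coates-Corti-Iritani-Tseng toric mirror theorem. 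Summing over basic classes yields the open mirror identity $W^{LF}_\mathcal{X}(q) = W^{HV}_\mathcal{X}(\check q(q))$ for an explicit orbifold inverse mirror map $\check q$. The Laurent monomials of $W^{HV}_\mathcal{X}$ and $W^{HV}_Y$ live in the same algebraic torus $(\cpx^\times)^n$, and their coefficients are algebraic functions on $\CM_B$ whose expansions around the two large-radius cusps corresponding to $\mathcal{X}$ and $Y$ admit a common analytic continuation; composing this continuation with the two inverse mirror maps produces the required change of variables $Q(q)$ and the desired identity $W^{LF}_\mathcal{X}(q) = W^{LF}_Y(Q(q))$.

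The main obstacle will be the open/closed equality itself. Proving it requires identifying the moduli space of one-pointed stable orbi-discs with boundary on a Lagrangian torus fiber with an appropriate $S^1$-fixed locus in the moduli space of closed orbifold stable maps to $\mathcal{X}$, and then matching the two Kuranishi/obstruction theories so that the virtual fundamental classes agree. The relative simplicity of $\proj(1,\ldots,1,n)$ — a single cyclic orbifold stratum, twisted sectors parametrized by $k/n$, and all relevant contributions coming from disc classes of low Maslov index — should keep this reduction clean and make the identification with one-point closed descendants transparent. Once this ingredient is in place, the remaining B-side analytic continuation is an explicit computation with Laurent and hypergeometric series that is made essentially mechanical by the well-understood secondary fan geometry of the pair $(\mathcal{X}, Y)$.
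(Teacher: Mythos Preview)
Your high-level architecture matches the paper's exactly: open mirror theorem for $\mathcal{X}$ via an open/closed comparison plus the CCIT closed mirror theorem, open mirror theorem for $Y$ from \cite{CLLT11,CLLT12}, then analytic continuation over the secondary-fan B-moduli to produce $Q(q)$. So the strategy is correct.

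However, your description of the key technical step---the open/closed equality---does not match the paper and, as stated, would not obviously go through. You propose to realize the orbi-disc moduli as an $S^1$-fixed locus inside a closed moduli space and then match obstruction theories via localization. The paper does something quite different (and more elementary): given the basic orbi-disc class $\beta$ with $\partial\beta=\bb_0$, it adds the ray $\bb_\infty=-\bb_0$ to the fan to form a toric orbifold $\bar{\mathcal{X}}$ and \emph{caps off} each stable orbi-disc by gluing the unique smooth basic disc of class $\beta_\infty$ along the boundary circle, producing a closed stable orbi-map of class $\bar\beta=\beta+\beta_\infty$ in $\bar{\mathcal{X}}$. For $\mathcal{X}=\proj(1,\ldots,1,n)$ one has $\bb_\infty=\bb_{n+1}$, so $\bar{\mathcal{X}}=\mathcal{X}$. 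The identification of moduli is then a direct set-theoretic bijection (Proposition~\ref{Prp_disk_form} shows every stable disc is the unique basic disc $u_0$ with a constant rational tail attached at the orbifold point), and since $u_0$ and its cap $\bar u_0$ are both unobstructed, the obstruction bundles on the two moduli spaces literally coincide---no virtual localization is invoked. The resulting identity is
\[
n_{1,l,\beta_{1/n}}^{\mathcal{X}}([\mathrm{pt}]_L;\mathbf{1}_{1/n},\ldots,\mathbf{1}_{1/n})
=\langle [\mathrm{pt}]_{\mathcal{X}},\mathbf{1}_{1/n},\ldots,\mathbf{1}_{1/n}\rangle^{\mathcal{X}}_{0,l+1,\bar\beta},
\]
a \emph{primary} $(l{+}1)$-pointed closed invariant, not a one-point descendant as you wrote; these are then read off from the $H^0$-part of the $1/z^2$-coefficient of $I_\mathcal{X}$.

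Two smaller corrections. First, $Y=\proj(K_{\proj^{n-1}}\oplus\mathcal{O})$ is a compact semi-Fano toric manifold, not a toric Calabi-Yau; the relevant input is the open mirror theorem of \cite{CLLT11,CLLT12} for semi-Fano toric manifolds. Second, only the twisted sector $\mathcal{X}_{1/n}$ (age $1$) enters the extended stacky fan and hence $W^{LF}_\mathcal{X}$; the sectors $\mathcal{X}_{k/n}$ with $k\geq 2$ have age $\geq 2$ and do not contribute, so the instanton corrections involve arbitrarily many insertions of $\mathbf{1}_{1/n}$ rather than one insertion from each $k/n$.
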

We prove this by first establishing a formula relating open and closed orbifold GW invariants for Gorenstein toric Fano orbifolds (Theorem \ref{THM_open_closed}); this generalizes the formula in \cite{Chan10,LLW10} to the orbifold setting. Then, we use the toric orbifold mirror theorem (for closed theories) recently proved by Coates, Corti, Iritani and Tseng \cite{CCIT13} to deduce an open toric mirror theorem for $\proj(1,\ldots,1,n)$ (Theorem \ref{open_mirror_thm_wPn}) and at the same time establish the convergence of the Lagrangian Floer superpotential $W^{LF}_\mathcal{X}$. We expect that this open toric mirror theorem (Conjecture \ref{open_mirror_thm}), which is an orbifold version of the one formulated in Chan-Lau-Leung-Tseng \cite{CLLT11}, is in general true for any compact toric K\"ahler orbifold (see Subsection \ref{sec:open_mirror_thm}).

Now the open CRC follows from this open mirror theorem and analytic continuation of the mirror maps for $\mathcal{X}$ and $Y$ (the convergence of the Lagrangian Floer superpotential $W^{LF}_Y$ is already proved in \cite{CLLT12}). We remark that the analytic continuation process was also done in the construction of the symplectic transformation $\mathbb{U}$ which appeared in Coates-Iritani-Tseng's formulation of the closed CRC \cite{CIT09}. We will discuss how the open toric mirror theorem is related to the open CRC in general (see Subsection \ref{closedCRC_vs_openCRC}).

Our strategy for proving Theorem \ref{open_CRC_wPn_intro} above is expected to work more generally in all semi-Fano cases. More precisely, we consider a compact simplicial toric variety $X$ which is semi-Fano in the sense of Definition \ref{defn:semiFano}. In this case  the canonical toric orbifold $\mathcal{X}$ is also semi-Fano. If $Y$ is a toric crepant resolution of $X$, then $Y$ is also semi-Fano. The strategy may be summarized in the following diagram:
\begin{center}
\setlength{\unitlength}{.5cm}
\begin{picture}(10,8) (1,-1)
\thicklines
\put(0,0){$W_\mathcal{X}^{LF}(q)$}
\put(10,0){$W_\mathcal{X}^{HV}(y)$}
\put(0,5){$W_Y^{LF}(Q)$}
\put(10,5){$W_Y^{HV}(U)$}
\put(3.3,.3){\line(1,0){6.5}}
\put(3.3,5.3){\line(1,0){6.5}}
\put(1,1.3){\line(0,1){3}}
\put(11,1.3){\line(0,1){3}}
\put(5,6.4){\scriptsize open }
\put(3.8,5.7){\scriptsize mirror theorem }
\put(4,1.4){\scriptsize orbifold open }
\put(3.8,0.7){\scriptsize mirror theorem }
\put(-4.0,3.1){\scriptsize open crepant}
\put(-5.1,2.4){\scriptsize resolution conjecture }
\put(12.6,3.1){\scriptsize  }
\put(11.5,2.4){\scriptsize   }
\end{picture}
\end{center}
On the right hand side we have the {\em Hori-Vafa superpotentials} $W_Y^{HV}$ and $W_\mathcal{X}^{HV}$ which are combinatorial in nature, see Definitions \ref{W^HV} and \ref{W^{HV}_Y}. On the top part of the diagram, the open mirror theorem for compact semi-Fano toric manifolds (Theorem \ref{open_mirror_thm_Y}\footnote{Theorem \ref{open_mirror_thm_Y} was first proposed and proved under a convergence assumption in \cite{CLLT11}, and was later proved {\em unconditionally} by an entirely different and much more geometric method in \cite{CLLT12}.}), relates $W^{LF}_Y$ and $W^{HV}_Y$:
$$W^{LF}_Y(Q)=W^{HV}_Y(U(Q)),$$
where $U=U(Q)$ is the inverse mirror map. On the bottom part of the diagram, the open mirror theorem for compact semi-Fano toric orbifolds (Conjecture \ref{open_mirror_thm}) relates $W^{LF}_\mathcal{X}$ and $W^{HV}_\mathcal{X}$:
$$W^{LF}_\mathcal{X}(q)=W^{HV}_\mathcal{X}(y(q)),$$
where $y=y(q)$ is the inverse of the mirror map $q=q(y)$. One can patch $W^{HV}_Y$ and $W^{HV}_\mathcal{X}$ to form a global family of functions by analyzing the toric data. Open CRC for $\mathcal{X}$ and $Y$ can then be deduced by a suitable analytic continuation of the (inverse) mirror map of $Y$.

\noindent \textbf{Example: $\mathcal{X}=\proj(1,1,2)$.} To illustrate our results, let us consider the $n=2$ case of Theorem \ref{open_CRC_wPn_intro}. Let $N=\Z^2$. The weighted projective plane $\mathcal{X}=\proj(1,1,2)$ is a Gorenstein toric Fano orbifold whose coarse moduli space is the toric variety defined by the simplicial fan $\Sigma_\mathcal{X}$ in $N_\R=\R^2$ generated by
\begin{align*}
\bb_1 = & (1,0), \bb_2 = (-1,2), \bb_3=(0,-1) \in N.
\end{align*}
There is a unique isolated $\Z_2$-singularity at the point corresponding to the cone generated by $\bb_1$ and $\bb_2$.

A crepant resolution of $\mathcal{X}$ is given by the Hirzebruch surface $Y=\mathbb{F}_2$ which is defined by the fan $\Sigma_Y$ in $N_\R$ generated by
\begin{align*}
\bb_1 = & (1,0), \bb_2 = (-1,2), \bb_3=(0,-1), \bb_4=(0,1) \in N.
\end{align*}
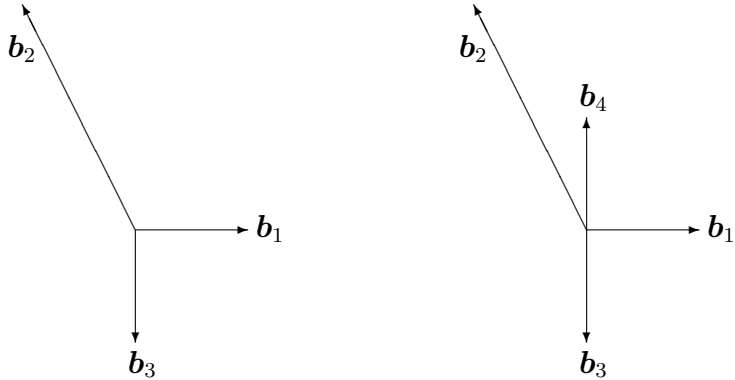
\begin{figure}[ht]
\setlength{\unitlength}{1mm}
\begin{picture}(100,60)
\put(20,18){\vector(1,0){15}} \put(36,17){$\bb_1$} \put(20,18){\vector(0,-1){15}} \put(19,-1){$\bb_3$} \put(20,18){\vector(-1,2){15}} \put(3,41){$\bb_2$}
\put(80,18){\vector(1,0){15}} \put(96,17){$\bb_1$} \put(80,18){\vector(0,-1){15}} \put(79,-1){$\bb_3$} \put(80,18){\vector(-1,2){15}} \put(63,41){$\bb_2$} \put(80,18){\vector(0,1){15}} \put(79,34.5){$\bb_4$}
\end{picture}
\caption{The fans $\Sigma_\mathcal{X}$ (left) and $\Sigma_Y$ (right).}\label{fig1}
\end{figure}

The Lagrangian Floer superpotential $W^{LF}_Y$ was first computed by Auroux \cite{auroux09} using degeneration method and wall-crossing formulas. Different proofs appeared later in \cite{Chan10,FOOO10}. The result is the following
\begin{equation}\label{LF_F2}
W^{LF}_Y(Q_1,Q_2)=z_1+z_2+\frac{Q_1Q_2^2}{z_1z_2^2}+\frac{Q_2(1+Q_1)}{z_2}
\end{equation}
where $z_1,z_2$ are the standard coordinates on $(\C^*)^2$ and $Q_1,Q_2$ are coordinates in the neighborhood $U_Y$ of the cusp corresponding to $Y$ in the stringy K\"ahler moduli space $\CM_A$. $Q_1$ corresponds to the exceptional $(-2)$-curve in $\mathbb{F}_2$ while $Q_2$ corresponds to the fiber class if we view $\mathbb{F}_2$ as a $\proj^1$-bundle over $\proj^1$.

On the other hand, we define the Lagrangian Floer superpotential $W^{LF}_\mathcal{X}$ using counting of Maslov index two smooth holomorphic discs in $\mathcal{X}$ as well as (virtual) counting of orbi-disc with possibly multiple $\tau_2$ orbifold insertions. Here, $\tau_2$ is the orbifold parameter which corresponds to the twisted sector $\mathcal{X}_{1/2}$ supported at the isolated $\Z_2$-singularity. We prove a relation between open and closed orbifold GW invariants (Theorem \ref{THM_open_closed}), and from this we can compute the Lagrangian Floer superpotential $W^{LF}_\mathcal{X}$:
\begin{equation}\label{LF_P(112)}
W^{LF}_\mathcal{X}(q_1,q_2)=z_1+z_2+\frac{q_1}{z_1z_2^2}+\frac{2q_1^{1/2}\sin\frac{\tau_2}{2}}{z_2}
\end{equation}
where $q_1,q_2:=\conste^{\tau_2}$ are coordinates in the neighborhood $U_\mathcal{X}\subset\CM_A$ of the cusp corresponding to $\mathcal{X}$. Here, $q_1$ corresponds to the hyperplane class in $\proj(1,1,2)$.

\begin{figure}[htp]
\begin{center}
\includegraphics[height=2in]{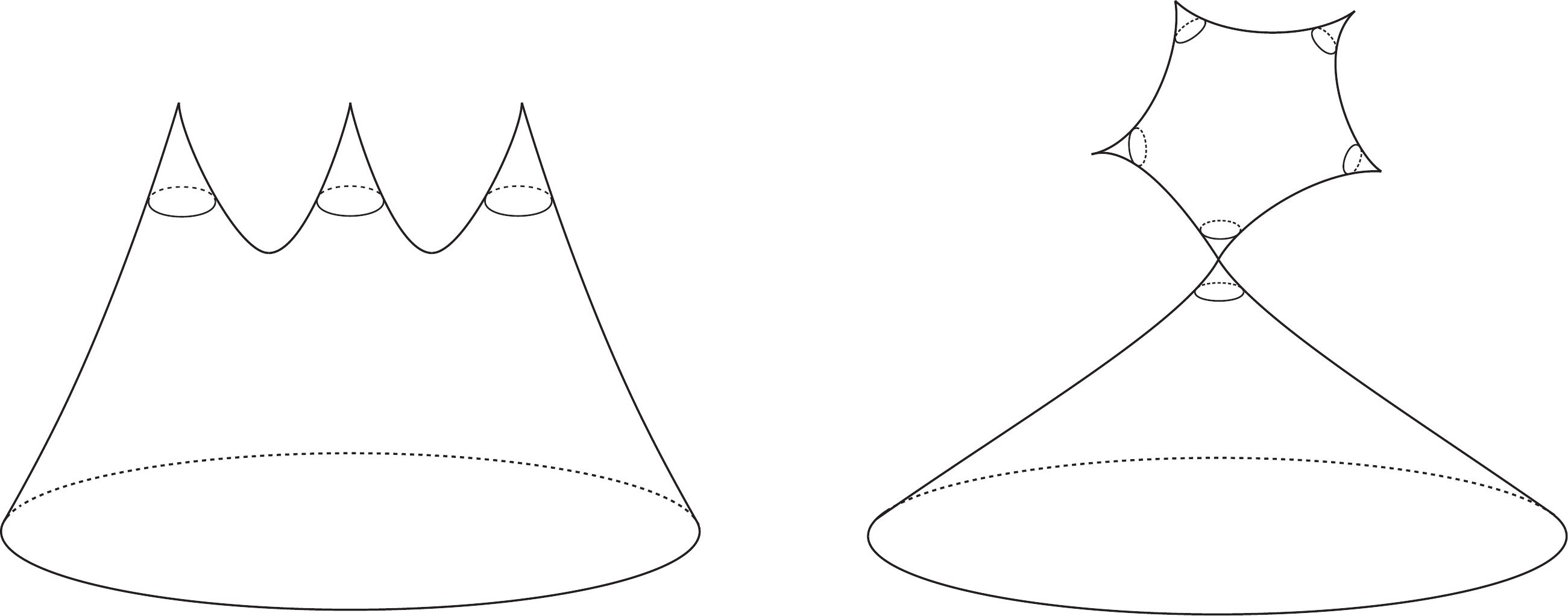}
\caption{An orbi-disc with three orbifold point (left) and a stable orbi-disc (right).}\label{orbidisc}
\end{center}
\end{figure}

The coefficient
$$2\sin\frac{\tau_2}{2}=\tau_2-\frac{\tau_2^3}{3!\cdot 2^2}+\frac{\tau_2^5}{5!\cdot 2^4}-\frac{\tau_2^7}{7!\cdot 2^6}+\cdots$$
is the generating function of the (virtual) counts of stable holomorphic orbi-discs with interior orbifold marked points mapped to the twisted sector $\mathcal{X}_{1/2}$. The first term $\tau_2$ corresponds to the basic orbi-disc classified in \cite{CP}, and the subsequent contributions with multiple $\tau_2$-insertions all come from the {\em same} minimal homotopy class of the basic orbi-disc. Namely the latter is from the virtual perturbation of the orbi-disc attached with constant orbi-sphere bubble as shown in the right-hand-side of Figure \ref{orbidisc}; actual holomorphic orbi-discs with more than one $\tau_2$ insertions do {\em not} have this minimal homotopy class.

A glance at the formulas (\ref{LF_F2}) and (\ref{LF_P(112)}) immediately shows that the substitution
\begin{equation}\label{change_of_variables_P(112)}
Q_1=\conste^{-\mathbf{i}(\pi-\tau_2)},\ Q_2=q_1^{1/2}\conste^{\mathbf{i}(\pi-\tau_2)/2}
\end{equation}
will give the open CRC in this example:
$$W^{LF}_Y(Q_1(q_1,q_2),Q_2(q_1,q_2))=W^{LF}_\mathcal{X}(q_1,q_2).$$
We emphasis that there is an analytic continuation hidden here: a priori the Lagrangian Floer superpotential $W^{LF}_Y(Q_1,Q_2)$ is defined only when the quantum parameters $Q_1,Q_2$ are small, say $|Q_1|,|Q_2|\ll 1$, so we need to analytically continue $W^{LF}_Y(Q_1,Q_2)$ to places where $|Q_1|=1$.

Notice that the change of variables (\ref{change_of_variables_P(112)}) is affine linear. Hence it preserves the canonical flat structures near the cusps. In fact, it was shown in \cite{CIT09} that the Frobenius manifolds defined by the genus 0 Gromov-Witten theory for the orbifold $\proj(1,1,2)$ and its resolution $\mathbb{F}_2$ are isomorphic after analytic continuation of quantum parameters. This is true in general for any toric orbifold with the Hard Lefschetz property.

Now the specialization
$$Q_1=-1,\ Q_2=\mathbf{i}q_1^{1/2},$$
which corresponds to setting the orbi-parameter $\tau_2=0$, gives the isomorphism
$$QH^*(\mathbb{F}_2)\cong QH^*(\proj(1,1,2))$$
asserted by Ruan's CRC (see \cite[Theorem 1.1]{CIT09}). From the point of view of Lagrangian Floer theory, this specialization corresponds to turning off orbifold parameters $\tau_2=0$, or equivalently, the vanishing of the term $\frac{Q_2(1+Q_1)}{z_2}$ in $W^{LF}_Y$ which counts stable discs in $Y$ which meet the exceptional $(-2)$-curve in $Y=\mathbb{F}_2$. This gives a new geometric interpretation of the specialization. \hfill $\square$\\

\begin{remark}
\hfill
\begin{enumerate}
\item
We point out that for $3$-dimensional toric Calabi-Yau geometry, one can consider open Gromov-Witten invariants with respect to Lagrangian submanifolds of Aganagic-Vafa type \cite{ag_va}. The open crepant resolution conjecture in this setting has also been studied; see Cavalieri-Ross \cite{Ca_Ro} for the case $[\C^2/\Z_2] \times \C$ and the recent vast generalization in Brini-Cavalieri-Ross \cite{Br_Ca_Ro}. We remark that the open orbifold GW invariants in \cite{Ca_Ro, Br_Ca_Ro} (and related works) are defined using localization formulas instead of directly by constructing moduli spaces of orbi-discs.

\item
It is expected that open Gromov-Witten theories of an orbifold and its crepant resolution are related even beyond the toric case. In the toric case, the open Gromov-Witten theory is encoded in the superpotential. In more general cases, one must work with more general objects such as the Fukaya category. It is natural to speculate that a relation between open Gromov-Witten theories of an orbifold and its crepant resolution would take the form of an equivalence between (suitable variants of) their Fukaya categories, after analytic continuation.
\end{enumerate}
\end{remark}

The rest of this paper is arranged as follows. In Section \ref{sec:prelim}, we briefly go through the theory of Maslov index for orbifolds (following the recent work of Cho-Shin \cite{CS}) and open orbifold GW theory for toric orbifolds (following Cho-Poddar \cite{CP}). These are prerequisites for defining the Lagrangian Floer superpotentials and hence LG models mirror to toric orbifolds, which we introduce in Section \ref{sec:LG_mirrors}, where we also state the open toric mirror theorem. In Section \ref{sec:open_CRC}, we formulate the open CRC, discuss its relations with the closed CRC and explain a new geometric meaning of the specialization of quantum parameters in Ruan's conjecture. Section \ref{comparison_thm} contains the proof of the equality between open and closed orbifold GW invariants (Theorem \ref{THM_open_closed}). In Section \ref{examples}, by applying the open-closed equality, we establish the open mirror theorem and deduce the open CRC for the weighted projective space $\mathcal{X}=\proj(1,\ldots,1,n)$.

\section*{Acknowledgment}
We are grateful to Conan Leung for encouragement and related collaborations, and for many useful conversations. We thank Yong-Geun Oh and Dongning Wang for related collaborations. We would also like to thank Hansol Hong for drawing Figure \ref{orbidisc}. The work of K. Chan described in this paper was substantially supported by a grant from the Research Grants Council of the Hong Kong Special Administrative Region, China (Project No. CUHK404412). The work of C.-H. Cho was supported by the National Research Foundation of Korea (NRF) grant funded by the Korea Government (MEST) (No. 2012-0000795 and No. 2012R1A1A2003117). The work of S.-C. Lau was supported by Harvard University and Kavli IPMU, and he sincerely thanks Kaoru Ono and Hiroshi Iritani for useful discussions on Lagrangian Floer theory and toric orbifolds. Part of this work was carried out when the authors met at the Kavli Institute for the Physics and Mathematics of the Universe in June 2012. It is a pleasure to thank them for hospitality and support.  We also thank the referee for helpful comments and suggestions.

\section{Preliminaries}\label{sec:prelim}

In this section, we review the Chern-Weil Maslov index for orbifolds introduced by Cho-Shin \cite{CS} and also the classification of holomorphic orbifold discs and the definition of open orbifold Gromov-Witten (GW) invariants for toric orbifolds following Cho-Poddar \cite{CP}.

\subsection{Maslov index}\label{sec:Maslov}
Given a real $2n$-dimensional symplectic vector bundle $\CE$ over a Riemann surface $\Sigma$ and a Lagrangian subbundle $\CL$ over the boundary $\partial\Sigma$, one can associate a Maslov index to the bundle pair $(\CE,\CL)$, which is defined as the rotation number of $\CL$ in a symplectic trivialization $\CE\cong\Sigma\times\R^{2n}$.

To extend the notion of Maslov index to the orbifold setting, the second author and Shin \cite{CS} introduced its Chern-Weil definition as follows. Let $J$ be a compatible complex structure of $\CE$. A unitary connection $\nabla$ of $\CE$ is called $\CL$-orthogonal if $\CL$ is preserved by the parallel transport via $\nabla$ along the boundary $\partial\Sigma$; see \cite[Definition 2.3]{CS} for the precise definition).
\begin{defn}[\cite{CS}, Definition 2.8]\label{def:cw}
The {\em Chern-Weil Maslov index} of the bundle pair $(\CE,\CL)$ is defined by
$$\mu_{CW}(\CE,\CL)=\frac{\mathbf{i}}{\pi}\int_\Sigma{tr(F_\nabla)}$$
where $F_\nabla\in\Omega^2(\Sigma,End(\CE))$ is the curvature induced by an $\CL$-orthogonal connection $\nabla$.
\end{defn}
It was proved in \cite[Section 3]{CS} that the Chern-Weil definition agrees with the usual one.

Now let $\Sigma$ be a bordered orbifold Riemann surface with interior orbifold marked points $z_1^+,\ldots,z_l^+\in\Sigma$ such that the orbifold structure at each marked point $z_i^+$ is given by a branched covering map $z\mapsto z^{m_i}$ for some positive integer $m_i$. For an orbifold vector bundle $\CE$ over $\Sigma$ and a Lagrangian subbundle $\CL\to\partial\Sigma$, the Chern-Weil Maslov index $\mu_{CW}(\CE,\CL)$ of the pair $(\CE,\CL)$ is defined  by taking an $\CL$-orthogonal connection $\nabla$, which is invariant under the local group action, and evaluating the integral in Definition \ref{def:cw} in an orbifold sense (see \cite[Definition 6.4]{CS}). It was shown in \cite[Proposition 6.5]{CS} that the Maslov index $\mu_{CW}(\CE,\CL)$ is independent of both the choice of the orthogonal unitary connection $\nabla$ and the choice of a compatible complex structure.

In \cite{CP}, the second author and Poddar have introduced yet another orbifold Maslov index, the so-called {\em desingularized Maslov index} $\mu^{de}$, defined by the desingularization process introduced by Chen-Ruan \cite{CR01}. Instead of recalling its definition (for which we refer the reader to \cite[Section 3]{CP}), let us recall the following result relating the Chern-Weil and the desingularized Maslov indices:
\begin{prop}[\cite{CS}, Proposition 6.10]
We have
\begin{equation}\label{eq:1}
\mu_{CW}(\CE,\CL) = \mu^{de}(\CE,\CL) + 2 \sum_{i=1}^l \iota(\CE;z_i^+),
\end{equation}
where $\iota(\CE;z_i^+)$ is the degree shifting number associated to the $\integer_{m_i}$-action on $\CE$ at the $i$-th marked point $z_i^+\in\Sigma$.
\end{prop}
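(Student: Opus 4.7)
The plan is to express both Maslov indices as Chern--Weil curvature integrals and localize their discrepancy to small discs around the orbifold marked points. By the orbifold definition, $\mu_{CW}(\CE,\CL)=\frac{\consti}{\pi}\int_\Sigma\tr(F_\conn)$, where the integral on each uniformizing chart $\tilde D_i\to D_i$ around $z_i^+$ is weighted by $1/m_i$. The desingularization $|\CE|$ is an ordinary Hermitian vector bundle on the underlying Riemann surface $|\Sigma|$ that agrees with $\CE$ away from the orbifold points, and the desingularized Maslov index admits an analogous non-orbifold Chern--Weil formula $\mu^{de}(\CE,\CL)=\frac{\consti}{\pi}\int_{|\Sigma|}\tr(F_{\conn^{de}})$ for any $\CL$-orthogonal unitary connection $\conn^{de}$ on $|\CE|$. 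Choosing $\conn^{de}$ so that it coincides with the pushforward of $\conn$ along the quotient map outside $\bigsqcup_i D_i$, the bulk contributions cancel and the problem reduces to showing that for each $i$,
$$\frac{\consti}{\pi}\left[\frac{1}{m_i}\int_{\tilde D_i}\tr(F_\conn)-\int_{D_i}\tr(F_{\conn^{de}})\right]=2\iota(\CE;z_i^+).$$

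By averaging $\conn$ over the local group and decomposing $\CE|_{z_i^+}$ into eigenlines of the $\Z_{m_i}$-action, both sides of the desired equality become additive in this decomposition, so it suffices to handle a single orbifold line bundle of weight $k$ with $0\le k<m$, for which $\iota=k/m$. On the uniformizing chart with coordinate $z$ and equivariant frame $e$ (so that $\zeta\in\Z_m$ acts on $e$ by $\zeta^k$), the section $\hat e:=z^k e$ is $\Z_m$-invariant and, via the Chen--Ruan desingularization recipe, descends to a smooth frame of $|\CE|$ over $D_i$. Writing $\conn=d+A$ in the $e$ frame, the gauge change $e\mapsto\hat e$ transforms the connection form to $\hat A=A+k\,d\log z$. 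In the coarse coordinate $w=z^m$, this pushes down to $A_w+(k/m)\,d\log w$ in the $\hat e$ frame on $D_i\setminus\{z_i^+\}$, and this is precisely the connection form of $\conn^{de}$ in the smooth frame $\hat e$ near the boundary $\partial D_i$.

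Applying Stokes' theorem to both curvature integrals in the invariant frame, the smooth piece $A_w$ contributes identically to both terms and cancels, whereas the logarithmic correction $(k/m)\,d\log w$ contributes only to $\int_{D_i}\tr(F_{\conn^{de}})$ with residue
$$\oint_{\partial D_i}\frac{k}{m}\,d\log w=\frac{k}{m}\cdot 2\pi\consti.$$
Hence the bracket equals $-(k/m)\cdot 2\pi\consti$, and multiplying by $\consti/\pi$ yields $2k/m=2\iota(\CE;z_i^+)$. Summing over the eigenline decomposition and over all orbifold marked points gives the claimed identity.

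The main obstacle will be justifying the manipulation carefully, since $\conn$ and $\conn^{de}$ live on different bundles and the naive pushforward of $\conn$ from $\tilde D_i$ does not extend smoothly across $z_i^+$ whenever some weight $k_j$ is nonzero. The key technical point is recognizing that the Chen--Ruan invariant frame $\hat e=z^k e$ provides a common reference in which the discrepancy between the two connections is precisely the logarithmic gauge factor $k\,d\log z$; once this is fixed, the rest is a standard residue computation with signs pinned down by the orientation of the uniformizing chart boundary and the conventions of \cite{CP}.
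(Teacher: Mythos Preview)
The paper itself does not prove this proposition; it is quoted without proof from Cho--Shin \cite{CS}. There is therefore no ``paper's own proof'' against which to compare your argument.

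That said, your argument is correct and is essentially the standard proof (and, as far as one can tell, the one in \cite{CS}). The localization of $\mu_{CW}-\mu^{de}$ to the discs $D_i$, the reduction to eigenlines, the passage to the Chen--Ruan invariant frame $\hat e = z^k e$, and the residue computation via the logarithmic gauge term $k\,d\log z$ are all sound. The final residue computation yields $(\consti/\pi)\cdot(-(k/m)\cdot 2\pi\consti)=2k/m=2\iota(\CE;z_i^+)$ as claimed.

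One expository point deserves tightening. When you ``apply Stokes' theorem to both curvature integrals in the invariant frame,'' the orbifold integral $\frac{1}{m_i}\int_{\tilde D_i}\tr(F_\conn)$ should be handled in the \emph{equivariant} frame $e$, in which the connection form $A$ is smooth on all of $\tilde D_i$; the invariant-frame connection $\hat A=A+k\,d\log z$ is singular at the origin, so applying Stokes in that frame on $\tilde D_i$ would introduce a spurious residue at $z=0$. After Stokes in the $e$-frame gives $\frac{1}{m_i}\oint_{\partial\tilde D_i}A$, one uses the free $m_i$-fold cover $\partial\tilde D_i\to\partial D_i$ to rewrite this as $\oint_{\partial D_i}A_w$, and the cancellation against the smooth piece of $A^{de}|_{\partial D_i}=A_w+(k/m)\,d\log w$ follows. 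This is implicitly what you are doing, and it is exactly the subtlety you flag in your final paragraph about the pushforward of $\conn$ not extending smoothly across $z_i^+$; it would be worth making the frame in which each Stokes step is carried out explicit.
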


\subsection{Toric orbifolds}\label{sec:toricorbifold}
A compact toric manifold is constructed from a complete fan of smooth rational polyhedral cones (see the books \cite{Au, Fu}). Analogously, a compact toric orbifold can be constructed from a combinatorial object called a {\em stacky fan}, which consists of a complete fan $\Sigma$ of simplicial rational polyhedral cones together with the choice of a multiplicity (or equivalently, a choice of lattice vector) for each 1-dimensional cone in $\Sigma$.

Consider the lattice $N= \Z^n$ and its dual lattice $M=Hom_\Z(N,\Z)$. For any $\Z$-module $R$, we denote $N_R = N \otimes_\Z R$, $M_R = M\otimes_\Z R$ and by $\langle\cdot,\cdot\rangle:M_R\times N_R\to R$ the natural pairing. Let $\Sigma$ be a fan of simplicial rational polyhedral cones. We denote by $\Sigma^{(k)}$ the set of all $k$-dimensional cones in $\Sigma$. The minimal lattice generators of 1-dimensional cones $\Sigma^{(1)}$ are labelled as $G(\Sigma):=\{\bv_1,\ldots,\bv_m\}$, where $\bv_j = (v_{j1},\ldots,v_{jn})\in N$. For each $j$, fix a lattice vector $\bb_j=c_j\bv_j\in N$ where $c_j$ is a positive integer. We call $\{\bb_1,\ldots,\bb_m\}$ the stacky vectors, and denote $\bb=(\bb_1,\ldots,\bb_m)$. The data $(\Sigma,\bb)$ is called a stacky fan, and this defines a toric orbifold as follows (for more details, see Borisov-Chen-Smith \cite{BCS}\footnote{Note that the construction in \cite{BCS} is more general since toric Deligne-Mumford stacks considered there can have non-trivial generic stabilizers. We do not need this generality.}).

Recall that a subset $\CP=\{\bv_{i_1},\ldots,\bv_{i_p}\}\subset G(\Sigma)$ is called a {\em primitive collection} if $\{\bv_{i_1},\ldots,\bv_{i_p}\}$ does not span a $p$-dimensional cone in $\Sigma$, while any $k$-element subset of $\CP$ for $0 \leq k < p$,  spans a $k$-dimensional cone in $\Sigma$. For a primitive collection $\CP = \{\bv_{i_1},\ldots, \bv_{i_p}\}$ in $G(\Sigma)$, we denote
$$\BA(\CP) = \{(z_1,\ldots,z_m)\in\C^m \mid z_{i_1}= \cdots=z_{i_p}=0\}.$$
Consider $Z(\Sigma)=\cup_{\CP} \BA(\CP)$, the closed algebraic subset  in $\C^m$, where $\CP$ runs over every primitive collections in $G(\Sigma)$. We define $U(\Sigma) = \C^m\setminus Z(\Sigma).$

Consider the map $\beta:\Z^m\to N$ which sends the basis vectors $e_i$ to $\bb_i$ for $i=1,\ldots,m$. Note that $\beta$ may not be surjective. We obtain the following exact sequences
by tensoring with $\R$ and $\mathbb{C}^*$:
\begin{equation}\label{kexact2}
 0 \to \mathfrak{k} \to \R^m \stackrel{\beta}{\to} N_\R \to 0.
\end{equation}
\begin{equation}\label{kexact4}
 0 \to K_\C \to (\C^*)^m \stackrel{\beta_{\C^*}}{\to} N_{\C^*} \to 0.
\end{equation}
Here the map $\beta_{\C^*}$ is given by
$$\beta_{\C^*}(\lambda_1,\ldots,\lambda_m) = \left(\prod_j \lambda_j^{b_{j1}},\ldots, \prod_j \lambda_j^{b_{jn}}\right).$$
For a complete stacky fan $(\Sigma, \bb)$, the algebraic torus $K_\C$ acts effectively on $U(\Sigma)$ with finite isotropy groups.  Then, the global quotient
\begin{equation}
\mathcal{X}_\Sigma = U(\Sigma)/K_\C
\end{equation}
is called the {\em compact toric orbifold} associated to $(\Sigma, \bb)$.

Consider a $d$-dimensional cone $\sigma$ in $\Sigma$ generated by $\bb_\sigma=(\bb_{i_1}, \ldots, \bb_{i_d})$. Define
$$\mathrm{Box}_{\bb_\sigma} =\left\{\nu \in N \mid \nu=\sum_{k=1}^d t_{k} \bb_{i_k},\ t_k \in [0,1)\cap\rat\right\}.$$
Note that $\mathrm{Box}_{\bb_\sigma}$ is in a one-to-one correspondence with the finite group $G_{\bb_{\sigma}} = N/N_{\bb_{\sigma}}$,
where $N_{\bb_\sigma}$ is the submodule of $N$ generated by lattice vectors $\{ \bb_{i_1}, \ldots, \bb_{i_d}\}$. It is easy to see that if $\tau \prec \sigma$, then  we have $\mathrm{Box}_{\bb_\tau}\subset \mathrm{Box}_{\bb_\sigma}$. Define
$$\mathrm{Box}_{\bb_\sigma}^{\circ} = \mathrm{Box}_{\bb_\sigma} - \bigcup_{\tau \precneqq \sigma} \mathrm{Box}_{\bb_\tau},$$
and
\begin{equation*}
\mathrm{Box} = \bigcup_{\sigma \in \Sigma^{(n)}} \mathrm{Box}_{\bb_\sigma} = \bigsqcup_{\sigma \in \Sigma} \mathrm{Box}_{\bb_\sigma}^{\circ}.
\end{equation*}
We set $\mathrm{Box}'=\mathrm{Box}\setminus\{0\}$. Then $\mathrm{Box}'$ is in a one-to-one correspondence with the {\em twisted sectors}, i.e. non-trivial connected components of the inertia orbifold of $\mathcal{X}_\Sigma$. We refer the readers to \cite{BCS} for more explanations (see also \cite[Section 3.1]{iritani09} for an excellent review on the essential ingredients of toric orbifolds). For $\nu\in\mathrm{Box}$, we denote by $\mathcal{X}_\nu$ the corresponding twisted sector of $\mathcal{X}$. Note that $\mathcal{X}_0=\mathcal{X}$.

Twisted sectors were used by Chen-Ruan \cite{CR04} to define a cohomology theory for orbifolds. For the toric orbifold $\mathcal{X}$, the {\em Chen-Ruan orbifold cohomology} $H^*_\mathrm{orb}(\mathcal{X};\rat)$ is given by
$$H^d_\mathrm{orb}(\mathcal{X};\rat)=\bigoplus_{\nu\in\mathrm{Box}}H^{d-2\iota(\nu)}(\mathcal{X}_\nu;\rat),$$
where $\iota(\nu)$ is the degree shifting number of the twisted sector $\mathcal{X}_\nu$ and the cohomology groups on the right hand side are singular cohomology groups. In \cite{CR04}, Chen and Ruan introduced a product structure which gives $H^*_\mathrm{orb}(\mathcal{X};\rat)$ a Frobenius algebra structure under the {\em orbifold Poincar\'e pairing}.

By a theorem of Delzant, a symplectic toric manifold is completely determined, up to equivariant symplectomorphisms, by its moment polytope. Lerman and Tolman \cite{LT} generalized this to the orbifold case, showing that a symplectic toric orbifold is completely determined by a simple rational convex polytope together with a positive integer attached to each of its facets.

More precisely, let $P$ be a simple rational convex polytope in $M_\R=\R^n$ with $m$ facets $F_1,\ldots,F_m$. Denote by $\bv_j\in N$ ($j=1,\ldots,m$) the inward normal vector to $F_j$ which is the minimal lattice vector. If we label each facet $F_j$ by a positive integer $c_j$ and set $\bb_j=c_j\bv_j$, then the data $(P,\bb)$ is called a {\em labeled polytope}, where we denote $\bb=(\bb_1,\ldots,\bb_m)$. By choosing suitable $\lambda_j \in \R$, the polytope $P$ can be written as
\begin{equation}\label{def:polytope}
P = \bigcap_{j=1}^m \{ u \in M_\R \mid \langle u,\bb_j \rangle \geq \lambda_j \},
\end{equation}
For each stacky vector $\bb_j$, we define the linear functional $\ell_j:M_\R\to\R$ by
\begin{equation}\label{eq:delldef}
\ell_j(u) = \langle u, \bb_j \rangle - \lambda_j,
\end{equation}
Then, we have
$$P = \bigcap_{j=1}^m\{ u \in M_\R \mid \ell_j(u) \geq 0\}.$$
Let $\Sigma(P)$ be the normal fan of $P$. Then the stacky fan $(\Sigma(P),\bb)$ defines a compact toric orbifold $\mathcal{X}_{\Sigma(P)}$ as explained above.

We can now state the theorem of Lerman and Tolman as follows.
\begin{theorem}[\cite{LT}, Theorem 1.5]\label{thm:LeTo}$\mbox{}$
\begin{enumerate}
\item
Let $(\mathcal{X},\omega)$ be a compact symplectic toric orbifold with moment map $\pi:\mathcal{X}\to M_\R$. Then the moment map image $P:=\pi(\mathcal{X})$ is a simple rational convex polytope in $M_\R$, and for each facet $F_j$ of $P$, there exists a positive integer $c_j$ (the label of $F_j$) such that the structure group of every $p \in \pi^{-1}(\mathrm{int}(F_j))$ is $\Z/c_j\Z$.
\item
Two compact symplectic toric orbifolds are equivariantly symplectomorphic (with respect to a fixed torus acting on both orbifolds) if and only if their associated labeled polytopes are isomorphic.
\item
Every labeled polytope arises from a compact symplectic toric orbifold $(\mathcal{X},\omega)$.
\end{enumerate}
\end{theorem}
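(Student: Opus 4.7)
The plan is to follow the strategy Delzant used for smooth symplectic toric manifolds and extend each step to the orbifold setting, where the labels $c_j$ record the cyclic isotropy along codimension-one strata. The three parts correspond to: convexity and rationality of the moment image for (1), an orbifold Delzant-type symplectic reduction for (3), and a uniqueness argument via equivariant local normal forms for (2).

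For part (1), I would first invoke an orbifold version of the Atiyah-Guillemin-Sternberg convexity theorem to conclude that $P=\pi(\mathcal{X})$ is a compact convex polytope whose vertices are images of $T$-fixed points. Near each fixed point, an equivariant symplectic slice exhibits $\mathcal{X}$ as locally modeled on the quotient of a linear $T$-representation by a finite group, and the moment map in this chart has the standard form. Reading off this local model gives: $P$ is simple at every vertex (exactly $n$ facets meet), the edges at a vertex span a rational cone in $M_\R$, and the generic stabilizer on $\pi^{-1}(\mathrm{int}(F_j))$ is cyclic because the effective $T$-action on the one-dimensional normal direction to $\pi^{-1}(F_j)$ must be faithful. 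The order of this cyclic group is the label $c_j$.

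For part (3), the construction is a Delzant-type symplectic reduction adapted to the labeled data. Given $(P,\bb)$, consider the map $\beta\colon\Z^m\to N$ sending $e_j\mapsto\bb_j$ and let $K\subset(S^1)^m$ be the kernel of the induced homomorphism $(S^1)^m\to N\otimes_\Z S^1$. Since $\beta$ need not be surjective, $K$ can be disconnected, and this discrete part is precisely what forces the reduced space to be an orbifold rather than a manifold. Performing symplectic reduction of $\C^m$ at a level determined by the $\lambda_j$ appearing in \eqref{def:polytope}, the result $\mathcal{X}:=\mu_K^{-1}(0)/K$ carries a residual $T^n=(S^1)^m/K$-action whose moment polytope is $P$, and the stabilizer at a point in $\pi^{-1}(\mathrm{int}(F_j))$ is $\Z/c_j\Z$ by construction, so the labeled polytope is reproduced.

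For part (2), uniqueness follows from an orbifold equivariant Darboux theorem: any two symplectic toric orbifolds with the same labeled polytope admit matching equivariant local models over each open face stratum, and these glue by a Moser-type interpolation respecting the torus action. The main obstacle I anticipate is the bookkeeping in the equivariant normal form near codimension $\geq 2$ strata, where one must verify that the (possibly non-cyclic) isotropy group there is reconstructed from the labels of the adjacent facets alone; this is precisely what forces $P$ to be simple and rational and makes the assignment $(P,\bb)\mapsto\mathcal{X}$ canonical.
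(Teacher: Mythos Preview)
The paper does not prove this theorem; it is quoted from Lerman--Tolman \cite{LT} as background and stated without proof. There is therefore nothing in the paper to compare your proposal against.

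That said, your outline is a reasonable sketch of the Lerman--Tolman argument: convexity via an orbifold Atiyah--Guillemin--Sternberg theorem, existence via a Delzant-type reduction where the possibly disconnected kernel $K$ accounts for the orbifold structure, and uniqueness via equivariant local models and a Moser argument. If you were asked to actually prove the result rather than cite it, the main technical burden you flag---the local normal form near higher-codimension strata and the reconstruction of isotropy from facet labels---is indeed the heart of the matter, and you would need to carry it out in detail rather than just anticipate it.
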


\subsection{Holomorphic (orbi-)discs}

Let $(\mathcal{X},\omega)$ be a compact K\"ahler toric orbifold of complex dimension $n$, equipped with the standard complex structure $J_0$. Denote by $(P,\bb)$ the associated labeled polytope, where $\bb=(\bb_1,\ldots,\bb_m)$ and $\bb_j=c_j\bv_j$. The polytope $P$ is defined as in \eqref{def:polytope}. We let $D_j$ be the toric prime divisor associated to $\bb_j$.

Let $L \subset\mathcal{X}$ be a Lagrangian torus fiber\footnote{Throughout this paper Lagrangian torus fibers are chosen to be general, i.e.  fibers of the moment map over general points in the interior of the moment polytope.} of the moment map $\pi:\mathcal{X}\to P$, and fix a relative homotopy class $\beta \in \pi_2(\mathcal{X},L) = H_2(\mathcal{X},L;\integer)$. We are interested in holomorphic (orbifold) discs in $\mathcal{X}$ bounded by $L$ and representing the class $\beta$.

Let $(\bD, z_1^+,\ldots,z_l^+)$ be an orbifold disc with interior orbifold marked points $z_1^+,\ldots,z_l^+$. Here $\bD$ is analytically the disc $D^2\subset\C$, together with orbifold data at each marked point $z_i^+$ for $i=1,\ldots,l$. For each $i$, the orbifold data at $z_i^+$ is given by a disc neighborhood of $z_i^+$ which is uniformized by a branched covering map $br:z \to z^{m_i}$ for some positive integer $m_i$. If $m_i=1$, we regard $z_i^+$ as a smooth interior marked point.

An {\em orbifold holomorphic disc} in $\mathcal{X}$ with boundary in $L$ is a continuous map
$$w:(\bD,\partial\bD) \to (\mathcal{X},L)$$
such that for any $z_0 \in \bD$, there is a disc neighborhood of $z_0$ with a branched covering map $br:z \to z^m$, and there is a local chart $(V_{w(z_0)},G_{w(z_0)},\pi_{w(z_0)})$ of $\mathcal{X}$ at $w(z_0)$ and a local holomorphic lifting $\WT{w}_{z_0}$ of $w$ satisfying
$$w \circ br = \pi_{w(z_0)} \circ \WT{w}_{z_0}.$$
We additionally assume that the map $w$ is {\em good} (in the sense of Chen-Ruan \cite{CR01}) and {\em representable}. In particular, for each marked point $z_i^+$, we have an associated {\em injective} homomorphism
\begin{equation}\label{eq:locgphi}
h_i:\Z_{m_i}\to G_{w(z_i^+)}
\end{equation}
between local groups which makes $\WT{w}_{z_i^+}$ equivariant. Denote by $\nu_i\in\mathrm{Box}$ the image of the generator $1$ of $\Z_{m_i}$ under $h_i$ and let $\mathcal{X}_{\nu_i}$ be the twisted sector of $\mathcal{X}$ corresponding to $\nu_i$. Such a map is said to be of {\em type} $\bx := (\mathcal{X}_{\nu_1},\ldots, \mathcal{X}_{\nu_l})$.

We recall the following classification theorem due to the second author and Poddar:
\begin{theorem}[\cite{CP}, Theorem 6.2]\label{classification_orbidiscs}
Let $\mathcal{X}$ be a symplectic toric orbifold corresponding to $(\Sigma(P),\bb)$ and $L$ a Lagrangian torus fiber. Consider a fixed orbit $\WT{L}\subset\C^m \setminus Z(\Sigma)$ of the real $m$-torus $T^m$ which projects to $L$. Suppose $w:(\bD,\partial \bD) \to (\mathcal{X},L)$ is a holomorphic map with orbifold singularities at interior marked points $z_1^+,\ldots,z_l^+ \in \bD$. Then
\begin{enumerate}
\item For each orbifold marked point $z_i^+$, we have a twisted sector $\nu_i=\sum_{j=1}^m t_{ij} \bb_{i_j} \in \mathrm{Box}$, obtained as in \eqref{eq:locgphi}.
\item For an analytic coordinate $z$ on $D^2=|\bD|$, the map $w$ can be lifted to a holomorphic map
$$\WT{w}:(D^2,\partial D^2) \to ((\C^m \setminus Z(\Sigma))/K_\C ,\WT{L}/K_\C \cap T^m),$$
so that the homogeneous coordinate functions (modulo $K_\C$-action) $\WT{w}=(\WT{w}_1,\ldots, \WT{w}_m)$ are given by
\begin{equation}\label{geneq}
\WT{w}_j = a_j \cdot \prod_{s=1}^{d_j} \frac{z-\alpha_{j,s}}{1-\overline{\alpha}_{j,s}z} \prod_{i=1}^{l} \left(\frac{z-z_i^+}{1-\overline{z_i^+}z}\right)^{t_{ij}}
\end{equation}
for  $d_j \in \Z_{\geq 0}$ ($j=1,\ldots,m$) and $\alpha_{j,s}\in\mathrm{int}(D^2)$, $a_j\in \C^*$.
\item The map $w$ whose lift is given as \eqref{geneq} satisfies
 $$\mu_{CW}(w) = \sum_{j=1}^m 2d_j+\sum_{i=1}^l 2\iota(\nu_i),$$
where $\iota_{\nu_i}$ is the degree shifting number associated to the twisted sector $\mathcal{X}_{\nu_i}$.
\end{enumerate}
\end{theorem}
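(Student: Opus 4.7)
The plan is to follow the classical strategy for classifying holomorphic discs in a toric manifold (due in its modern form to Cho--Oh), lifting to the homogeneous coordinate presentation of $\mathcal{X}_\Sigma = U(\Sigma)/K_\C$, and then supplementing it with a local analysis at the orbifold marked points. Since $L$ is a general fiber $\pi^{-1}(u)$ of the moment map with $u \in \mathrm{int}(P)$, its preimage $\WT{L} \subset U(\Sigma)$ under the quotient $U(\Sigma)\to\mathcal{X}$ is a free $T^m$-orbit (modulo $K_\C \cap T^m$) on which each homogeneous coordinate function $|z_j|$ takes a constant value $|a_j|>0$. My first step is to lift the given orbifold map $w:(\bD,\partial\bD)\to(\mathcal{X},L)$ to a holomorphic map $\WT{w}:(D^2,\partial D^2)\to(U(\Sigma)/K_\C,\,\WT{L}/(K_\C\cap T^m))$; because $L$ is disjoint from the orbifold locus and $D^2$ is simply connected, such a lift exists and is unique up to a $K_\C$-action which can be absorbed into the constants $a_j$. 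Passing to the homogeneous components, each $\WT{w}_j$ is then a meromorphic function on $D^2$ satisfying $|\WT{w}_j(\zeta)|=|a_j|$ for $\zeta\in\partial D^2$.

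Next I analyze the local behavior at each orbifold marked point $z_i^+$. By definition of a representable good map, a disc neighborhood of $z_i^+$ is uniformized by $\tau\mapsto\tau^{m_i}=:z-z_i^+$, and there is a $\Z_{m_i}$-equivariant holomorphic lift $\WT{w}_{z_i^+}$ to an orbifold chart at $w(z_i^+)$ with respect to the homomorphism $h_i$ of \eqref{eq:locgphi}. Writing $\nu_i = h_i(1) = \sum_{j=1}^m t_{ij}\bb_{i_j}\in\mathrm{Box}$, the $\Z_{m_i}$-action through $h_i$ rotates the $j$-th homogeneous coordinate by $e^{2\pi\consti t_{ij}}$. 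Equivariance of the local lift then forces $\WT{w}_j(\tau)=\tau^{m_i t_{ij}}\cdot g_j(\tau)$ for a nowhere-vanishing holomorphic germ $g_j$ (invariant under the $\Z_{m_i}$-action), which on the underlying disc means that $\WT{w}_j$ contributes a ``fractional zero'' of order $t_{ij}$ at $z=z_i^+$. Dividing out by $\left(\tfrac{z-z_i^+}{1-\overline{z_i^+}z}\right)^{t_{ij}}$ absorbs this fractional zero while preserving the boundary modulus $|a_j|$ on $\partial D^2$. After doing this at every $z_i^+$, the quotient becomes a genuine holomorphic function on $D^2$ of constant modulus on the boundary with only finitely many interior zeros; the classical Blaschke product factorization (or equivalently the argument principle for the unit disc with constant boundary modulus, together with the maximum principle applied to the quotient) produces the finite product $a_j\prod_{s=1}^{d_j}\tfrac{z-\alpha_{j,s}}{1-\overline{\alpha}_{j,s}z}$. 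This yields the explicit form \eqref{geneq}, proving (1) and (2) simultaneously.

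For (3), the Maslov index computation, I would apply the Chern-Weil definition together with the relation \eqref{eq:1}, $\mu_{CW}(w)=\mu^{de}(w)+2\sum_{i=1}^l \iota(\nu_i)$. The desingularized bundle pair $(w^*T\mathcal{X},w^*TL)^{de}$ is constructed by modifying $w^*T\mathcal{X}$ at each orbifold marked point so that the pulled-back bundle is smooth, which effectively erases the orbifold singularities of the underlying map. The resulting smooth bundle pair is isomorphic to that of a holomorphic disc in the toric \emph{manifold} obtained by resolving the singularities along the image, whose coordinate-wise Blaschke factorization has exactly $d_j$ smooth zeros in the $j$-th component. By the standard manifold computation (splitting $T\mathcal{X}$ stably into line bundles corresponding to the toric prime divisors $D_j$ and using $\int_w c_1(\mathcal{O}(D_j))=d_j$), one obtains $\mu^{de}(w)=\sum_j 2d_j$. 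Substituting into the Chern-Weil--desingularized relation gives the asserted formula.

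The most delicate step is the local analysis in the middle paragraph: one must verify that the $\Z_{m_i}$-equivariance genuinely constrains $\WT{w}_j$ to vanish with the prescribed \emph{fractional} order $t_{ij}$, and then that the resulting factor $\left(\tfrac{z-z_i^+}{1-\overline{z_i^+}z}\right)^{t_{ij}}$, though multivalued on $D^2$, combines consistently across all $j$ to give a single-valued map to the quotient $U(\Sigma)/K_\C$ (the monodromies around $z_i^+$ of the various factors must cancel in $K_\C$ precisely because $\nu_i\in N$ lifts the cotangent data). Checking this compatibility is the heart of the classification and the step where the data of the stacky fan $(\Sigma,\bb)$ enters decisively.
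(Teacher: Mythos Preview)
The paper does not prove this theorem: it is quoted verbatim as \cite[Theorem 6.2]{CP} and used as input, with no argument given. So there is no ``paper's own proof'' to compare against; your proposal is a sketch of what the proof in Cho--Poddar \cite{CP} actually does.

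As such a sketch, your outline is essentially correct and follows the same route as \cite{CP}: lift to the homogeneous-coordinate presentation \`a la Cho--Oh, perform a local $\Z_{m_i}$-equivariance analysis at each interior orbifold marked point to extract the fractional Blaschke factor $\bigl(\tfrac{z-z_i^+}{1-\overline{z_i^+}z}\bigr)^{t_{ij}}$, then apply the classical Blaschke factorization to the remaining single-valued piece, and finally compute $\mu_{CW}$ via $\mu_{CW}=\mu^{de}+2\sum_i\iota(\nu_i)$ together with the manifold computation $\mu^{de}=\sum_j 2d_j$. You also correctly flag the genuine subtlety, namely that the fractional powers are individually multivalued and one must check that their monodromies around each $z_i^+$ assemble to an element of $K_\C$ (this is exactly where $\nu_i\in N$ and the exact sequence \eqref{kexact4} enter). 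One small imprecision: in part (3) you describe the desingularized bundle pair as ``isomorphic to that of a holomorphic disc in the toric \emph{manifold} obtained by resolving the singularities along the image''; the desingularization in \cite{CR01,CP} is a purely bundle-theoretic modification over the domain, not a geometric resolution of the target, so it is cleaner to argue directly that the desingularized pair splits as a sum of line-bundle pairs with partial indices $2d_j$.
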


In the above theorem, if we set $l=0$ and $d_j=0$ for all $j$ except for one $j_0$ where $d_{j_0}=1$, then the corresponding holomorphic disc is smooth and intersects the associated toric divisor $D_{j_0}\subset X$ with multiplicity one; its homotopy class is denoted as $\beta_{j_0}$. On the other hand, given $\nu\in\mathrm{Box}'$, if we set $l=1$ and $d_j=0$ for all $j$, then we obtain a holomorphic orbi-disc, whose homotopy class is denoted as $\beta_\nu$.

We have the following lemma from Cho-Poddar \cite{CP}:
\begin{lemma}[\cite{CP}, Lemma 9.1]
For $\mathcal{X}$ and $L$ as above,  the relative homotopy group $\pi_2(\mathcal{X},L)$ is generated by the classes $\beta_j$ for $j=1,\ldots,m$ together with $\beta_\nu$ for $\nu\in\mathrm{Box}'$.
\end{lemma}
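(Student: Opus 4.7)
The plan is to prove generation in two steps: first, represent any class in $\pi_2(\mathcal{X},L)$ by a holomorphic orbi-disc of the explicit form classified in Theorem \ref{classification_orbidiscs}; second, decompose such a representative, via a boundary-connect-sum argument, as a $\Z$-combination of the basic classes $\{\beta_j\}$ and $\{\beta_\nu\}$.

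For the first step, I would argue that a relative orbifold homotopy class is determined by combinatorial data: the intersection multiplicities $d_j\in\Z_{\geq 0}$ with the toric divisors $D_j$ and the twisted-sector labels $\nu_i\in\mathrm{Box}'$ at its interior orbifold marked points. In the smooth toric case ($\mathrm{Box}'=\emptyset$) this reduces to the classical identification $\pi_2(X,L)\cong\Z^m$ via divisor intersection numbers; this identification follows from the symplectic-quotient description $X=U(\Sigma)/K_\C$, the vanishings $\pi_2(L)=0$ and $\pi_1(X)=0$, and the fact that every multi-degree is realized by a product of Blaschke factors. In the orbifold setting the same quotient description, combined with the Chen-Ruan branched-cover model at each orbifold marked point, extends the identification to include the twisted-sector data, and Theorem \ref{classification_orbidiscs} supplies a holomorphic representative of the form (\ref{geneq}) for any admissible combinatorial data.

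For the second step, I would deform such a representative to localize each Blaschke factor and each orbifold factor onto disjoint small sub(orbi-)discs. Concretely, each factor $(z-\alpha_{j,s})/(1-\overline{\alpha}_{j,s}z)$ appearing in the lift $\WT{w}_j$ can be isotoped, by moving $\alpha_{j,s}$ in the interior of $D^2$ and suitably reparametrizing the domain, to a basic disc representing $\beta_j$ supported near a single interior point; and each orbifold factor $\left((z-z_i^+)/(1-\overline{z_i^+}z)\right)^{t_{ij}}$ in $\WT{w}_j$ can be concentrated near $z_i^+$ to form a basic orbi-disc representing $\beta_{\nu_i}$ with $\nu_i=\sum_j t_{ij}\bb_{i_j}$. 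This boundary-connect-sum procedure yields
$$[w]=\sum_{j=1}^m d_j\,\beta_j+\sum_{i=1}^l\beta_{\nu_i}\in\pi_2(\mathcal{X},L),$$
exhibiting $[w]$ as a $\Z$-combination of the proposed generators.

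The main obstacle is justifying the second step rigorously inside the orbifold category: the localization homotopy must preserve the representable branched-cover structure and the injective local-group homomorphism $h_i:\Z_{m_i}\to G_{w(z_i^+)}$ at each orbifold marked point. This amounts to path-connectedness of the space of orbifold holomorphic discs with fixed combinatorial data $(d_j,\{\nu_i\})$; the smooth factor reduces to the classical connectedness of the space of degree-$d_j$ Blaschke products, and the orbifold factor follows from the discreteness of the twisted-sector assignment together with the local constancy of each $\nu_i$ under deformation.
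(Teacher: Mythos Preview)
The paper does not supply its own proof of this lemma; it is quoted verbatim from Cho--Poddar \cite{CP} and used as a black box. So there is no in-paper argument to compare against, and your proposal must stand on its own.

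Your argument has a genuine gap in Step~1. You claim that any class in $\pi_2(\mathcal{X},L)$ can be represented by a holomorphic orbi-disc of the form given in Theorem~\ref{classification_orbidiscs}, with intersection data $d_j\in\Z_{\geq 0}$. But Theorem~\ref{classification_orbidiscs} classifies \emph{holomorphic} maps, and holomorphicity forces $d_j\geq 0$ (a holomorphic disc meets each toric divisor with non-negative local multiplicity). An arbitrary class in $\pi_2(\mathcal{X},L)$---for instance $-\beta_1$, or more generally any class whose intersection number with some $D_j$ is negative---admits no holomorphic representative at all, so your Step~1 simply does not apply to it. Consequently Step~2, which decomposes a holomorphic representative, never gets off the ground for such classes, and you have only shown that the \emph{semigroup} of holomorphic disc classes lies in the non-negative span of the basic classes, not that the full group $\pi_2(\mathcal{X},L)$ is generated.

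The repair is to abandon holomorphic representatives and argue topologically. In the smooth case you already cite the identification $\pi_2(X,L)\cong\Z^m$; this comes from the long exact sequence of the pair $(X,L)$ together with $\pi_2(L)=0$, and the $\beta_j$ form a $\Z$-basis (not merely a generating set for a sub-semigroup) because their intersection pairing with the divisors $D_j$ is the identity matrix. The orbifold statement is the analogous computation for the orbifold relative homotopy group (equivalently $H_2(\mathcal{X},L;\Z)$ as the paper writes), where the extra generators $\beta_\nu$ account for the torsion or fractional classes coming from the orbifold strata. Your Blaschke-factor decomposition is a correct and useful way to see that a holomorphic disc class equals $\sum d_j\beta_j+\sum_i\beta_{\nu_i}$, but it is a consequence of the lemma, not a proof of it.
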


We call these generators of $\pi_2(\mathcal{X},L)$ the {\em basic disc classes}. They are the analogue of Maslov index two classes in toric manifolds. Recall that the Maslov index two holomorphic discs in toric manifolds are minimal, in the sense that every non-trivial holomorphic disc bounded by a Lagrangian torus fiber has Maslov index at least two. Also, such discs play a prominent role in the Lagrangian Floer theory of Lagrangian torus fibers in toric manifolds, namely, the Floer cohomology of Lagrangian torus fibers are determined by them. Basic disc classes were used in \cite{CP} to define the leading order bulk orbi-potential, and it can be used to determine Floer homology of torus fibers with suitable bulk deformations.

We recall the classification of basic discs from \cite{CP}:
\begin{corollary}[\cite{CP}, Corollaries 6.3 and 6.4]$\mbox{}$
\begin{enumerate}
\item The smooth holomorphic discs of Maslov index two (modulo $T^n$-action and automorphisms of the domain) are in a one-to-one correspondence with the stacky vectors $\{\bb_1,\ldots,\bb_m\}$.
\item The holomorphic orbi-discs with one interior orbifold marked point and desingularized Maslov index zero (modulo $T^n$-action and automorphisms of the domain) are in a one-to-one correspondence with the twisted sectors $\nu \in \mathrm{Box}'$ of the toric orbifold $\mathcal{X}$.
\end{enumerate}
\end{corollary}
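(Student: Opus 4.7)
My plan is to deduce both claims as direct specializations of Theorem \ref{classification_orbidiscs}, followed by a normalization using the combined action of $PSL(2,\R)$ (automorphisms of the domain) and the compact torus $T^n$ acting on $\mathcal{X}$. For part (1), the absence of orbifold marked points ($l=0$) reduces the Maslov index formula to $\mu_{CW}(w)=2\sum_j d_j$, so the hypothesis $\mu_{CW}(w)=2$ forces $d_{j_0}=1$ for exactly one index $j_0$ and $d_j=0$ for all $j\neq j_0$. For part (2), the combination $l=1$ and $\mu^{de}(w)=0$ gives $\mu_{CW}(w)=2\iota(\nu_1)$ via \eqref{eq:1}, and Theorem \ref{classification_orbidiscs}(3) then forces every $d_j$ to vanish. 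In each case the formula \eqref{geneq} collapses to a very restricted shape: in part (1) only one component is a non-constant Blaschke factor, while in part (2) the components are powers of a single Blaschke factor with exponents $t_{1j}$ encoding a box element $\nu_1$, which must be nonzero because $z_1^+$ is a genuine orbifold point and hence $\nu_1\in\mathrm{Box}'$.

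Next I would carry out the gauge-fixing. The condition that $\WT{w}|_{\partial D^2}$ lies in $\WT{L}/K_\C\cap T^m$ fixes the modulus $|a_j|$ in terms of the moment value of $L$, because the Blaschke factors are unimodular on $\partial D^2$. Thus the only remaining continuous moduli are the interior point ($\alpha$ or $z_1^+$) and the $m$-tuple of phases of the $a_j$'s modulo $K$. I would use $PSL(2,\R)$ to move the interior point to $0\in D^2$, and then absorb the residual rotational freedom together with the remaining phase moduli using the compact $T^n\cong T^m/K$ action on $\mathcal{X}$. What survives is purely discrete data: the index $j_0$ in part (1), which establishes the bijection with $\{\bb_1,\ldots,\bb_m\}$, and the box element $\nu_1$ in part (2), which establishes the bijection with $\mathrm{Box}'$.

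The step I expect to require the most care is confirming that each discrete choice actually yields a representable good map whose lift lands in $U(\Sigma)=\C^m\setminus Z(\Sigma)$ rather than meeting the forbidden locus. For a basic smooth disc, only $\WT{w}_{j_0}$ vanishes, and only at the normalized point $z=0$, so no primitive collection is hit simultaneously. For a basic orbi-disc, the components $\WT{w}_j$ with $t_{1j}>0$ all vanish at $z_1^+$, but the indices with positive exponents are precisely the generators of the unique minimal cone $\sigma\in\Sigma$ whose relative interior contains $\nu_1$; since generators of a single cone do not form a primitive collection, the image avoids $Z(\Sigma)$. One must also construct an equivariant uniformizing chart at $z_1^+$ so that the branched cover $z\mapsto z^{m_1}$ intertwines with the injection $\Z_{m_1}\hookrightarrow G_{w(z_1^+)}$ required by \eqref{eq:locgphi}; the exponents $t_{1j}$ supplied by \eqref{geneq} are exactly what makes this chart compatible, and this is the point at which the stacky structure of the fan is genuinely used.
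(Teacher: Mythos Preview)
Your proposal is correct and follows exactly the natural route: both statements drop out of Theorem~\ref{classification_orbidiscs} by specializing to $l=0$ (so $\mu_{CW}=2\sum_j d_j$ forces a single $d_{j_0}=1$) and to $l=1$ with $\mu^{de}=0$ (so comparing \eqref{eq:1} with part~(3) of the classification forces all $d_j=0$), after which the $PSL(2,\R)\times T^n$ gauge-fixing you describe reduces the moduli to the discrete data $j_0$ or $\nu_1$. The paper itself does not supply a proof here---the corollary is quoted from \cite{CP}---so there is nothing further to compare against; your argument is precisely the derivation implicit in calling this a corollary of the classification theorem.
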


For each $\nu \in \mathrm{Box}'$, we introduce the linear functional $\ell_\nu:M_\R\to\R$,
 defined as
\begin{equation}\label{eq:delnudef}
\ell_\nu(u) = \langle u, \nu \rangle - \lambda_\nu,
\end{equation}
which is analogous to \eqref{eq:delldef} for stacky vectors.
Here, $\lambda_\nu$ is the unique constant which makes $\ell_\nu (\pi(\mathcal{X}_\nu)) \equiv 0$.

Alternatively, \eqref{eq:delnudef} can be defined as follows. For $\nu = \sum_{j=1}^m t_j \bb_j$, we can define $$\ell_\nu = \sum_{j=1}^m t_j \ell_j.$$
In this case, we have $\lambda_\nu = \sum_{j=1}^m t_j \lambda_j$. Thus, for any $u \in P$, $\ell_a(u) \geq 0$ for  $a \in \{1,\ldots, m\} \cup \mathrm{Box}'$. Geometrically, $\ell_a(u)$ is the symplectic area (up to a multiple of $2\pi$) of the basic disc class $\beta_a$ bounded by the Lagrangian torus fiber $L(u)$ over $u \in\mathrm{int}(P)$, where $\mathrm{int}(P)$ denotes the interior of the polytope $P$ (see \cite[Lemma 7.1]{CP}).

\subsection{Moduli spaces of holomorphic (orbi-)discs}

Consider the moduli space $\CM^{main}_{k+1,l}(L,\beta,\bx)$ of good representable stable maps from bordered orbifold Riemann surfaces of genus zero with $k+1$ boundary marked points $z_0,z_1\ldots,z_k$ and $l$ interior (orbifold) marked points $z_1^+,\ldots,z_l^+$ in the homotopy class $\beta$ of type $\bx = (\mathcal{X}_{\nu_1},\ldots, \mathcal{X}_{\nu_l})$. Here, the superscript ``$main$" indicates that we have chosen a connected component on which the boundary marked points respect the cyclic order of $S^1=\partial D^2$. Let $\CM^{main,reg}_{k+1,l}(L,\beta,\bx)$ be its subset consisting of all maps from an (orbi-)disc (i.e. without (orbi-)sphere and (orbi-)disc bubbles). It was shown in \cite{CP} that $\CM^{main}_{k+1,l}(L,\beta,\bx)$ has a Kuranishi structure of real virtual dimension
\begin{equation}\label{eq:dimes}
n + \mu^{de}(\beta,\bx) + k + 1 + 2l - 3 = n + \mu_{CW}(\beta) + k + 1 + 2l -3 - 2\sum_{i=1}^l\iota(\nu_i).
\end{equation}



The following proposition was proved in \cite{CP}.
\begin{prop}[\cite{CP}, Proposition 9.4]\label{prop:moduli}
$\mbox{}$
\begin{enumerate}
\item  Suppose that $\mu^{de}(\beta,\bx)<0$. Then, $\CM^{main,reg}_{k+1,l}(L,\beta,\bx)$ is empty.

\item For $\beta$ satisfying  $\mu^{de}(\beta,\bx)=0$ and  $\beta\neq\beta_\nu$ for any $\nu\in\mathrm{Box}$,  the moduli space $\CM^{main,reg}_{k+1,1}(L,\beta,\bx)$ is empty.

\item For any $\beta$,  $\CM_{k+1,1}^{main,reg}(L,\beta,\bx)$ is Fredholm regular. Moreover, the evaluation map $ev_0:\CM^{main,reg}_{k+1,1}(L,\beta,\bx) \to L$ (at the boundary marked point $z_0$) is a submersion.

\item If $\CM^{main}_{1,1}(L,\beta)$ is non-empty and if $\partial\beta\notin N_{\bb}$, then there exist $\nu\in \mathrm{Box}$, $k_j\in\N$ ($j=1,\ldots,m$) and $\alpha_i\in H_2(X;\Z)$ such that
    $$\beta = \beta_\nu + \sum_{j=1}^m k_j \beta_j + \sum_i \alpha_i,$$
    where each $\alpha_i$ is realized by a holomorphic (orbi-)sphere. 

\item For $\nu\in \mathrm{Box}'$, we have
    $$\CM^{main,reg}_{1,1}(L,\beta_\nu) =\CM^{main}_{1,1}(L,\beta_\nu).$$
    The moduli space $\CM^{main}_{1,1}(L,\beta_\nu)$ is Fredholm regular and the evaluation map $ev_0$ is an orientation preserving diffeomorphism.
\end{enumerate}
\end{prop}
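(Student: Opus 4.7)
The plan is to leverage Theorem \ref{classification_orbidiscs} as the main tool, since it gives an entirely explicit description of holomorphic orbi-discs in homogeneous coordinates. Parts (1) and (2) follow almost directly: combining the Maslov formula of Theorem \ref{classification_orbidiscs}(3) with (\ref{eq:1}) yields $\mu^{de}(w) = 2\sum_{j=1}^{m} d_j$, where each $d_j \in \Z_{\geq 0}$. Hence no holomorphic representative can have $\mu^{de}(\beta, \bx) < 0$, proving part (1). For part (2), $\mu^{de}(\beta, \bx) = 0$ forces every $d_j = 0$, so with $l = 1$ the explicit formula (\ref{geneq}) produces only the basic orbi-disc $\beta_{\nu_1}$; thus if $\beta$ is not any such $\beta_\nu$, the moduli space is empty.

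For part (3), I would prove Fredholm regularity by splitting the pullback bundle. Given a disc $w$ of the explicit form (\ref{geneq}), the bundle $w^*T\mathcal{X}$ decomposes (after passing to the desingularization at orbifold marked points) via toric homogeneous coordinates as a $K_\C$-quotient of a direct sum of holomorphic line bundles $\bigoplus_{j=1}^m L_j$ on $(D^2,\partial D^2)$, where each $L_j$ carries a totally real boundary condition and has degree $d_j \geq 0$. Such bundles have vanishing $H^1$ on the closed disc by the standard Riemann-Roch/Atiyah-Bott computation for totally real boundary conditions of non-negative Maslov index, so the linearized Cauchy-Riemann operator is surjective. For the submersion property of $ev_0$, the real torus $T^n$ acts freely and transitively on $L$; the associated holomorphic vector fields on $\mathcal{X}$ produce variations of $w$ whose differentials under $ev_0$ already span $T_{ev_0(w)}L$.

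For part (4), a stable map in $\CM^{main}_{1,1}(L,\beta)$ decomposes as a tree of (orbi-)disc components---each classified by Theorem \ref{classification_orbidiscs}---together with (orbi-)sphere components. The total boundary class $\partial\beta \in N$ is the sum of the disc boundaries $\sum_j d_j \bb_j + \nu$ (where $\nu \in \mathrm{Box}$ records any orbifold marked point on the component); since $\partial\beta \notin N_\bb$ by hypothesis and smooth disc bubbles contribute only to $N_\bb$, at least one orbi-disc component with $\nu \in \mathrm{Box}'$ must occur, with the remaining components pooling into smooth basic classes $\beta_j$ and (orbi-)sphere classes $\alpha_i$, yielding the asserted decomposition. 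Part (5) is then obtained by specializing part (4) to $\beta = \beta_\nu$: the decomposition $\beta_\nu = \beta_{\nu'} + \sum_j k_j \beta_j + \sum_i \alpha_i$ combined with Maslov index additivity ($0 = 0 + 2\sum k_j + 2\sum c_1(\alpha_i)$, using non-negativity of $c_1$ on sphere classes), symplectic area positivity, and injectivity of $H_2(\mathcal{X}) \to H_2(\mathcal{X},L)$ (since $H_1(\mathcal{X}) = 0$) forces $\nu' = \nu$, all $k_j = 0$, and no sphere bubbles. Hence $\CM^{main}_{1,1}(L,\beta_\nu) = \CM^{main,reg}_{1,1}(L,\beta_\nu)$; Fredholm regularity and submersion of $ev_0$ then follow from part (3), and since the virtual dimension equals $\dim L = n$, with $ev_0$ globally injective from the explicit $K_\C$-parametrization of the coefficients $(a_j)$ in (\ref{geneq}), the map $ev_0$ is an orientation-preserving diffeomorphism.

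I expect the main obstacle to be part (3): controlling the linearized $\dbar$ operator in the presence of orbifold marked points requires careful bookkeeping of $\Z_{m_i}$-equivariant sections at each orbifold insertion, and verifying that the desingularization procedure identifies the cokernel with that of a genuinely smooth bundle over the desingularized disc. A secondary subtlety is the use of non-negativity of $c_1$ on holomorphic sphere classes in part (5); this is where the K\"ahler structure enters and must be justified via the explicit classification of holomorphic spheres in compact toric (orbi-)varieties.
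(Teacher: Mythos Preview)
The paper does not prove this proposition; it is quoted verbatim from Cho--Poddar \cite{CP} as a cited result. So there is no ``paper's own proof'' to compare against, and your sketch should be judged on its own merits as a reconstruction of the argument in \cite{CP}.

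Your outline for parts (1)--(4) is essentially the correct strategy and matches what one finds in \cite{CP}: the classification theorem (Theorem~\ref{classification_orbidiscs}) drives (1) and (2) directly, and the Cho--Oh style splitting of $w^*T\mathcal{X}$ into line bundles of non-negative degree handles (3). Your identification of the equivariance bookkeeping in (3) as the main technical obstacle is accurate.

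There is, however, a genuine gap in your argument for part (5). You invoke ``non-negativity of $c_1$ on sphere classes'' to conclude from
\[
0 = 0 + 2\textstyle\sum_j k_j + 2\sum_i c_1(\alpha_i)
\]
that all $k_j$ and all $\alpha_i$ vanish. But the proposition carries no semi-Fano hypothesis, and compact toric orbifolds can easily contain holomorphic rational curves with $c_1 < 0$ (e.g.\ the $(-n)$-section in $\mathbb{F}_n$ for $n\geq 3$). Your proposed justification via ``the explicit classification of holomorphic spheres in compact toric (orbi-)varieties'' does not rescue this: that classification does not yield $c_1\geq 0$.

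The correct mechanism is symplectic \emph{area}, not Chern number. Write $\beta_\nu = \beta_{\nu'} + \sum_j k_j\beta_j + \alpha$ and evaluate areas at a point $u$ in the moment-map image of the twisted sector $\mathcal{X}_\nu$, so that $\ell_\nu(u)=0$. Since every holomorphic component has non-negative area, each term on the right vanishes; in particular $\omega(\alpha)=0$ forces $\alpha=0$, and $\ell_{\nu'}(u)=\ell_j(u)=0$ forces $\nu',\bb_j$ to lie in the same cone as $\nu$. Then $\nu=\nu'+\sum_j k_j\bb_j$ with $\nu\in\mathrm{Box}$ forces all $k_j=0$ and $\nu'=\nu$. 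This is exactly the argument appearing later in the paper as Lemma~\ref{LEM_prim-basic}(ii), and it is what \cite{CP} uses.
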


\subsection{Open orbifold Gromov-Witten invariants}

We are now ready to introduce open orbifold GW invariants following Cho-Poddar \cite[Section 12]{CP}.

First of all, fix $l$ twisted sectors $\mathcal{X}_{\nu_1}, \ldots, \mathcal{X}_{\nu_l}$ of the toric orbifold $\mathcal{X}$. Consider the moduli space $\CM^{main}_{1,l}(L,\beta,\bx)$ of good representable stable maps from bordered orbifold Riemann surfaces of genus zero with $1$ boundary marked points and $l$ interior orbifold marked points of type $\bx = (\mathcal{X}_{\nu_1},\ldots, \mathcal{X}_{\nu_l})$ representing the class $\beta$. By \cite[Lemma 12.5]{CP}, for each given $E>0$, there exists a system of multisections $\mathfrak{s}_{\beta,1,l,\bx}$ on $\CM^{main}_{1,l}(L,\beta,\bx)$ for $\beta\cap\omega<E$ which are transversal to 0 and invariant under the $T^n$-action.

Hence, if the virtual dimension of the moduli space is less than $n$, then the perturbed moduli spaces
$\CM^{main}_{1,l}(L,\beta,\bx)^{\mathfrak{s}_{\beta,1,l,\bx}}$ is empty. From the dimension formula \eqref{eq:dimes}, the virtual dimension of the moduli space
$\CM^{main}_{1,l}(L,\beta,\bx)$ is equal to $n$ if and only if
\begin{equation}\label{critmaslov}
\mu_{CW}(\beta) =  2 + \sum_{j=1}^l (2\iota(\nu_i)-2).
\end{equation}

Now let $\beta\in\pi_2(\mathcal{X},L)$ be a class with Maslov index satisfying \eqref{critmaslov}. Then the virtual fundamental chain
$$[\CM_{1,l}(L,\beta,\bx)]^{vir}:=[\CM^{main}_{1,l}(L,\beta,\bx)^{\mathfrak{s}_{\beta,1,l,\bx}}]$$
becomes a {\em cycle} because it has no real codimension one boundaries (because of $T^n$ equivariant perturbation). Hence we can define the following {\em open orbifold GW invariant}:
\begin{defn}
Let $\beta\in\pi_2(\mathcal{X},L)$ be a class with Maslov index $\mu_{CW}(\beta)=2+\sum_{i=1}^l (2\iota(\nu_i)-2)$. Then we define $n_{1,l,\beta}^\mathcal{X}([\mathrm{pt}]_{L};\mathbf{1}_{\nu_1},\ldots,\mathbf{1}_{\nu_l})\in\rat$ by the push-forward
$$n_{1,l,\beta}^\mathcal{X}([\mathrm{pt}]_{L};\mathbf{1}_{\nu_1},\ldots,\mathbf{1}_{\nu_l})=ev_{0*}([\CM_{1,l}(L,\beta,\bx)]^{vir})\in H_n(L;\rat)\cong\rat,$$
where $ev_0:\CM^{main}_{1,l}(L,\beta,\bx)\to L$ is evaluation at the boundary marked point, $[\mathrm{pt}]_{L} \in H^n(L;\rat)$ is the point class of the Lagrangian torus fiber $L$, and $\mathbf{1}_{\nu_i} \in H^0(\mathcal{X}_{\nu_i};\rat)\subset H^{2\iota(\nu_i)}_{\mathrm{orb}}(\mathcal{X};\rat)$ denotes the fundamental class of the twisted sector $\mathcal{X}_{\nu_i}$.
\end{defn}

By \cite[Lemma 12.7]{CP}, the numbers $n_{1,l,\beta}^\mathcal{X}([\mathrm{pt}]_{L};\mathbf{1}_{\nu_1},\ldots,\mathbf{1}_{\nu_l})$ are independent of the choice of the system of multisections used to perturb the moduli spaces, so they are indeed invariants.

Suppose that $\iota(\nu_i)=1$ for all $i$, then $\mu_{CW}(\beta)=2$  satisfies the condition \eqref{critmaslov} for any number of interior orbifold marked points. Thus we can possibly have infinitely many nonzero invariants associated to a given relative homotopy class in this situation. These invariants, as we will see in examples, are quite non-trivial, and it is in sharp contrast with the manifold case. In the case of manifolds, the virtual counting of discs with repeated insertions of interior marked points, which are required to pass through divisors (analogous to $\iota(\nu_i)=1$), are determined by an open analogue of the divisor equation (see \cite{Cho05,FOOO2}). Note that, however, the divisor equation does not hold for interior orbifold marked points.

\begin{remark}
Here we only consider bulk deformations from the fundamental classes of twisted sectors (which is why we use the notation $n_{1,l,\beta}^\mathcal{X}([\mathrm{pt}]_{L};\mathbf{1}_{\nu_1},\ldots,\mathbf{1}_{\nu_l})$). This is because for the purpose of this paper we only need bulk deformations from $H^{\leq2}_\textrm{orb}(\mathcal{X})$.

Consider a cycle $A_{i}$  in $\mathcal{X}_{\nu_i}$, and $\tau_i$ the Poincar\'e dual of $A_{i}$ in $\mathcal{X}_{\nu_i}$. As a cohomology class in $\mathcal{X}_{\nu_i}$, $\tau_i$ is of degree $2d_i:=\textrm{dim}_\R(\mathcal{X}_{\nu_i})-\textrm{dim}_\R(A_{i})$, i.e. $\tau_i\in H^{2d_i}(\mathcal{X}_{\nu_i})\subset H^{2d_i+2\iota(\nu_i)}_\textrm{orb}(\mathcal{X})$. So the condition $\tau_i\in H^{\leq2}_\textrm{orb}(\mathcal{X})$ forces $\tau_i$ to have cohomological degree $0$ in $\mathcal{X}_{\nu_i}$, i.e. we must have $d_i=0$ or $\textrm{dim}_\mathbb{R}(\mathcal{X}_{\nu_i})=\textrm{dim}_\mathbb{R}(A_i)$. This explains why we only consider bulk deformations from fundamental classes of twisted sectors and the invariants $n_{1,l,\beta}^\mathcal{X}([\mathrm{pt}]_{L};\mathbf{1}_{\nu_1},\ldots,\mathbf{1}_{\nu_l})$.
We hope to discuss the general case elsewhere.
\end{remark}

\begin{corollary}\label{openGW_basic}
For $\nu \in \mathrm{Box}'$, we have
$$ n_{1,1,\beta_\nu}^\mathcal{X}([\mathrm{pt}]_{L};\mathbf{1}_\nu) =1.$$
For $j\in \{1,\ldots,m\}$, we have
$$n_{1,0,\beta_j}^\mathcal{X}([\mathrm{pt}]_{L}) =1.$$
\end{corollary}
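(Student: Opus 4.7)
The plan is to show that in both cases the moduli space entering the virtual count is, without any non-trivial perturbation, identified with $L$ via the boundary evaluation $ev_0$, and that this identification is orientation-preserving; the invariant then reduces to the topological degree of $ev_0$, which equals $1$ under $H_n(L;\rat)\cong\rat$. Both classes meet the criticality condition \eqref{critmaslov}: for $\beta_\nu$ with insertion $\mathbf{1}_\nu$ one has $\mu_{CW}(\beta_\nu)=2\iota(\nu)$ and $l=1$, while for $\beta_j$ one has $\mu_{CW}(\beta_j)=2$ and $l=0$, so the dimension formula \eqref{eq:dimes} gives virtual dimension $n$ in each case.

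For the orbi-disc invariant $n_{1,1,\beta_\nu}^\mathcal{X}([\mathrm{pt}]_L;\mathbf{1}_\nu)$ the result will be a direct consequence of Proposition \ref{prop:moduli}(5). That proposition packages exactly the three ingredients I need: first, the equality $\CM^{main,reg}_{1,1}(L,\beta_\nu)=\CM^{main}_{1,1}(L,\beta_\nu)$ rules out (orbi-)sphere and (orbi-)disc bubbling in the stable-map compactification; second, the moduli space is Fredholm regular, so I may take the identically-zero multisection $\mathfrak{s}_{\beta_\nu,1,1,(\mathcal{X}_\nu)}$ and the virtual cycle agrees with the honest fundamental class; third, $ev_0$ is an orientation-preserving diffeomorphism onto $L$. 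Combining these gives $ev_{0*}[\CM_{1,1}(L,\beta_\nu,(\mathcal{X}_\nu))]^{\mathrm{vir}}=[L]$, which is the value $1$.

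For the smooth basic-disc invariant $n_{1,0,\beta_j}^\mathcal{X}([\mathrm{pt}]_L)$ my plan is to use the classification Theorem \ref{classification_orbidiscs}, specialized to $l=0$, $d_j=1$ and $d_{j'}=0$ for $j'\neq j$: every holomorphic representative has the explicit lift $\tilde w_{j'}=a_{j'}$ for $j'\neq j$ and $\tilde w_j=a_j(z-\alpha)/(1-\bar\alpha z)$, so in particular the image stays in a single smooth affine chart of $\mathcal{X}$ (no orbifold point is touched). The whole problem therefore reduces to the toric-manifold setting of Cho--Oh: after quotienting by disc automorphisms fixing the boundary marked point $z_0$ and by the residual $T^n$-action, the moduli space is a smooth manifold diffeomorphic to $L$ via $ev_0$, Fredholm regularity is inherited from the manifold case, and $ev_0$ is orientation-preserving. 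The main obstacle and the only genuinely non-formal step is ruling out bubbling in the stable-map compactification of $\CM^{main}_{1,0}(L,\beta_j)$; I expect to handle this by the standard minimality argument — a stable splitting of $\beta_j$ would exhibit at least one non-constant (orbi-)sphere summand and/or another disc summand, and since each holomorphic (orbi-)disc has strictly positive symplectic area with non-negative Chern--Weil Maslov index (by Proposition \ref{prop:moduli}(1)), a splitting whose total Maslov index is $2$ forces the configuration to be trivial on a generic torus fiber $L$. Putting all of this together yields $ev_{0*}[\CM_{1,0}(L,\beta_j)]^{\mathrm{vir}}=[L]$ and hence $n_{1,0,\beta_j}^\mathcal{X}([\mathrm{pt}]_L)=1$.
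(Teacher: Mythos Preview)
Your proposal is correct and follows essentially the same approach as the paper: the paper's proof simply says the count is one up to sign from the classification theorem, with the sign coming from the orientation analysis in \cite{Cho06} for toric manifolds carried over to the orbifold setting. Your argument is a more explicit unpacking of this, in particular invoking Proposition~\ref{prop:moduli}(5) directly for the orbi-disc case (which the paper's proof leaves implicit) and spelling out the reduction to Cho--Oh for the smooth basic disc; the bubbling discussion you add for $\beta_j$ is not in the paper's proof but is a reasonable elaboration, though your citation of Proposition~\ref{prop:moduli}(1) for ``non-negative Chern--Weil Maslov index'' is slightly off since that item concerns $\mu^{de}$ rather than $\mu_{CW}$.
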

\begin{proof}
It is not hard to see that the count is one up to sign from the classification theorem. But the sign has been carefully computed in \cite{Cho06} in the toric manifold case, and the orientations in this orbifold case is completely analogous.
\end{proof}

\noindent\textbf{Example: orbifold sphere with two orbifold points.} To illustrate the importance of dimension counting, let us consider an orbifold sphere with two orbifold points with $\Z_p, \Z_q$ singularities.

Let $\mathcal{X}=\proj^1_{p,q}$ for $p, q \in \integer_{>0}$ and a circle fiber $L \subset \proj^1_{p,q}$. There are two orbifold points $x \cong [\{0\}/\integer_p]$ and $x' \cong [\{\infty\}/\integer_q]$. The twisted sectors are given by:
\begin{align*}
& \mathcal{X}_0 = \mathcal{X},\\
& \mathcal{X}_{1/p}, \ldots, \mathcal{X}_{(p-1)/p} \textrm{ supported at $x$, and}\\
& \mathcal{X}_{1/q}, \ldots, \mathcal{X}_{(q-1)/q} \textrm{ supported at $x'$}.
\end{align*}
The total orbifold cohomology ring
$$H_{\mathrm{orb}}^*(\proj^1_{p,q}) = H_{\mathrm{orb}}^0 \oplus H_{\mathrm{orb}}^{2/p} \oplus \ldots \oplus H_{\mathrm{orb}}^{(2p-2)/p} \oplus H_{\mathrm{orb}}^{2/q} \oplus \ldots \oplus H_{\mathrm{orb}}^{(2q-2)/q} \oplus H_{\mathrm{orb}}^2$$
is generated by
$$\mathbf{1}_{\mathcal{X}}, \mathbf{1}_{1/p}, \ldots, \mathbf{1}_{(p-1)/p}, \mathbf{1}'_{1/q}, \ldots, \mathbf{1}'_{(q-1)/q}, [\mathrm{pt}].$$
Here, $\mathbf{1}_{\mathcal{X}}$ and $[\mathrm{pt}]$ have degree shifting numbers equal to zero, while $\mathbf{1}_{i/p}$ and $\mathbf{1}'_{j/q}$ have degree shifting numbers $i/p$ and $j/q$ respectively.

Any disc class $\beta$ is generated by the basic disc classes.  In this case they consist of basic smooth disc classes $\beta_0$ and $\beta'_0$, and basic orbi-disc classes $\beta_{i/p}$ which pass through $\mathcal{X}_{i/p}$ for $i = 1, \ldots, p-1$, and $\beta'_{j/q}$ which pass through $\mathcal{X}_{j/q}$ for $j = 1, \ldots, q-1$.  $\beta_0$ and $\beta'_0$ have Maslov index two, while $\beta_{i/p}$ and $\beta'_{j/q}$ have Maslov index $2i/p$ and $2j/q$ respectively.

Let $\tau_i$ be one of the classes $\mathbf{1}_{1/p}, \ldots, \mathbf{1}_{(p-1)/p}, \mathbf{1}'_{1/q}, \ldots, \mathbf{1}'_{(q-1)/q}$ for $i = 1, \ldots, l$.  By dimension counting, $n_{\beta}([\mathrm{pt}]_{L};\tau_1,\ldots,\tau_l) \neq 0$ only when
$$ \mu_{CW}(\beta) = 2 - \sum_{i=1}^l (2-2\iota_{\nu_i}).$$
Notice that the right hand side is always smaller than or equal to two.

The above equality is satisfied either when $\beta$ is a basic smooth disc class $\beta_0$ or $\beta'_0$, in which case $\mu_{CW}(\beta) = 2$ and $l= 0$, or when $\beta$ is one of the basic orbi-disc class $\beta_{i/p}$ or $\beta'_{j/q}$, in which case $l=1$, $\mu_{CW}(\beta) = 2i/p$ or $2j/q$ and $\tau_1 = \mathbf{1}_{i/p}$ or $\mathbf{1}'_{j/q}$ respectively. For all these basic classes, the open orbifold GW invariants are equal to one.  All other disc classes cannot satisfy the above equality, since the left hand side must increase for other (non-trivial) disc classes, while the right hand side must decrease when the number of interior orbifold marked points increases. \hfill $\square$

\section{LG models as mirrors for toric orbifolds}\label{sec:LG_mirrors}

The Landau-Ginzburg (LG) models mirror to compact toric manifolds have been written down by Hori and Vafa \cite{hori00}.\footnote{The prediction that the mirrors for toric manifolds (or more generally non-Calabi-Yau manifolds) are given by LG models was made even earlier (perhaps implicitly) in the work of Batyrev, Givental and Kontsevich.} Their recipe is combinatorial in nature. In \cite{Cho06}, the second author and Oh gave a geometric construction of the LG mirrors for compact toric Fano manifolds using Lagrangian Floer theory of the moment map fiber tori and counting of holomorphic discs bounded by them. This was later generalized to any compact toric manifolds by the work of Fukaya, Oh, Ohta and Ono \cite{FOOO1,FOOO2,FOOO10b}. In fact the two constructions are related by mirror maps \cite{CLLT11}; this is the statement of the open mirror theorem.

In this section, we shall introduce the LG models which are mirror to compact toric orbifolds and formulate an orbifold version of the open toric mirror theorem.

\subsection{Extended K\"ahler moduli}

Let $(\mathcal{X},\omega)$ be a compact toric K\"ahler orbifold of complex dimension $n$ associated to a labeled polytope $(P,\bb)$, where $\bb=(\bb_1,\ldots,\bb_m)$ and $\bb_j=c_j\bv_j$ denote the stacky vectors. Let $(\Sigma(P),\bb)$ be the corresponding stacky fan.
\begin{defn}\label{defn:semiFano}
A complex orbifold $\mathcal{X}$ is called {\em (semi-)Fano} if for every non-trivial rational (orbi-)curve $\mathcal{C}\subset \mathcal{X}$, $c_1^{CW}(\mathcal{C}) > 0$ ($\geq 0$).
\end{defn}
From now on, we assume that the following conditions are satisfied (cf. Iritani \cite[Remark 3.4]{iritani09}):
\begin{assumption}\label{assumption}$\mbox{}$
\begin{enumerate}
\item[(1)] $\mathcal{X}$ is semi-Fano, and\\
\item[(2)] the set $\{\bb_1,\ldots,\bb_m\} \cup \{\nu\in\textrm{Box}' \mid \iota(\nu)\leq 1\}$ generates the lattice $N$ over $\Z$.
\end{enumerate}
\end{assumption}
In this case, we enumerate the set $\{\nu\in\textrm{Box}' \mid \iota(\nu)\leq 1\}$ as
$$\{\nu\in\textrm{Box}' \mid \iota(\nu)\leq 1\}=\{\bb_{m+1},\ldots,\bb_{m'}\},$$
where each $\bb_j$ ($j=m+1,\ldots,{m'}$) is of the form
$$\bb_j=\nu_j=\sum_{k=1}^m t_{jk}\bb_i\in N,\ t_{jk}\in[0,1)\cap\rat.$$
The stacky fan together with these extra vectors $\bb_{m+1},\ldots,\bb_{m'}$ constitute an {\em extended stacky fan} (in the sense of Jiang \cite{jiang08}).

Consider the map $\beta^e:\Z^{m'}\to N$ sending the basis vectors $e_j$ to $\bb_j$ for $j=1,\ldots,m'$. By (2) of our assumption, this map is surjective. Hence we have the exact sequence
\begin{equation}\label{kexact5}
 0 \to \mathbb{L} \stackrel{\iota}{\to} \Z^{m'} \stackrel{\beta^e}{\to} N \to 0,
\end{equation}
where $\mathbb{L}:=\mathrm{Ker}(\beta^e)$. Let $r':=m'-n$ denote the rank of $\mathbb{L}$ and let $r:=m-n$ denote the rank of $H_2(\mathcal{X};\rat)$ so that $r'=r+(m'-m)$. We choose an integral basis
$$d_a=\sum_{j=1}^{m'} d_{aj}e_j\in\Z^{m'},\ a=1,\ldots,r'$$
of $\mathbb{L}$ such that $d_{aj}=0$ when $1\leq a\leq r$ and $m+1\leq j\leq m'$, and $\{d_1,\ldots,d_r\}$ provides a positive basis of $H_2(\mathcal{X};\rat)$.

Let $\{p_1,\ldots,p_{r'}\}$ be the basis of $\mathbb{L}^\vee$ dual to $\{d_1,\ldots,d_{r'}\}$. Then the images of $p_1,\ldots,p_r$ in $H^2(\mathcal{X};\R)$ is a nef basis $\{\bar{p}_1,\ldots,\bar{p}_r\}$ of $H^2(\mathcal{X};\R)$ and those of $p_{r+1},\ldots,p_{r'}$ are zero. Define elements $D_j\in\mathbb{L}^\vee$ ($j=1,\ldots,m'$) by
$$D_j=\sum_{a=1}^{r'} d_{aj}p_a$$
so that the map $\iota$ in \eqref{kexact5} is given by $\iota=(D_1,\ldots,D_{m'})$. Over the rational numbers, we have (cf. \cite[Section 3.1.2]{iritani09})
\begin{align*}
H_2(\mathcal{X};\rat) & \cong \mathrm{Ker}((D_{m+1},\ldots,D_{m'}):\mathbb{L}\otimes\rat\to\rat^{m'-m})\\
H^2(\mathcal{X};\rat) & \cong \mathbb{L}^\vee\otimes\rat\Big\slash \bigoplus_{j=m+1}^{m'}\rat D_j.
\end{align*}
We can also identify $\mathbb{L}^\vee\otimes\rat$ with the subspace
$$H^2(\mathcal{X})\oplus\bigoplus_{j=m+1}^{m'} H^0(\mathcal{X}_{\bb_j})\subset H^{\leq2}_\mathrm{orb}(\mathcal{X})$$
where $D_j$ is corresponding to $\mathbf{1}_{\nu_j}$ for $j=m+1,\ldots,m'$.

We denote by $\bar{D}_j$ the image of $D_j$ in $H^2(\mathcal{X};\R)$. Note that for $j=1,\ldots,m$, $\bar{D}_j$ is the Poincar\'e dual of the corresponding toric divisor $D_j\subset\mathcal{X}$, i.e.
$$\bar{D}_j=\sum_{a=1}^r d_{aj}\bar{p}_a=\mathrm{PD}(D_j)\in H^2(\mathcal{X};\R);$$
while $\bar{D}_j=0\in H^2(\mathcal{X};\R)$ for $j=m+1,\ldots,m'$.

Now let $K_\mathcal{X}\subset H^2(\mathcal{X};\R)=H^{1,1}(\mathcal{X};\R)$ be the K\"ahler cone of $\mathcal{X}$.
\begin{defn}
The {\em extended K\"ahler cone} of $\mathcal{X}$ is defined by
$$\widetilde{K}_\mathcal{X}:=K_\mathcal{X}\oplus\bigoplus_{j=m+1}^{m'}\R_{>0}D_j\subset\mathbb{L}^\vee\otimes\R.$$
\end{defn}

\subsection{Landau-Ginzburg mirrors}

The mirror of a toric orbifold $\mathcal{X}$ is given by a Landau-Ginzburg (LG) model $(\check{\mathcal{X}},W)$ consisting of a noncompact K\"ahler manifold $\check{\mathcal{X}}$ together with a holomorphic function $W:\check{\mathcal{X}}\to\C$. The manifold $\check{\mathcal{X}}$ is simply given by the bounded domain $\check{\mathcal{X}}:=\mathrm{int}(P)\times M_\R/M$ in the algebraic torus $M_{\C^*}\cong(\C^*)^n$. The holomorphic function $W$, usually called the {\em superpotential} of the LG model, can be constructed in two ways, one is combinatorial and the other is geometric. The open toric mirror theorem says that these two constructions are related by a mirror map.

First of all, Let $\be_1,\ldots,\be_n$ be the standard basis of $N=\Z^n$. Then each $\be_k$ defines a coordinate function
$$z_k:=\exp(2\pi\consti\langle \cdot, \be_k\rangle):M_{\C^*}\to\C.$$
Let $\CM^\mathcal{X}_B:=\mathbb{L}^\vee\otimes\C^*$ be the {\em $B$-model moduli space} for $\mathcal{X}$. The basis $d_1,\ldots,d_{r'}$ of $\mathbb{L}$ defines $\C^*$-valued coordinates $y_1,\ldots,y_{r'}$ on $\CM^\mathcal{X}_B$.
\begin{defn}\label{W^HV}
The {\em extended Hori-Vafa superpotential} of $\mathcal{X}$ is the function $W^{HV}_\mathcal{X}:\check{\mathcal{X}} \to \C$ defined by
\begin{eqnarray*}
W^{HV}_\mathcal{X}=\sum_{j=1}^{m'} C_j z^{\bb_j},
\end{eqnarray*}
where $z^v$ denotes the monomial $z_1^{v^1}\cdots z_n^{v^n}$ if $v=\sum_{k=1}^n v^k\be_k \in N$ and the coefficients $C_j$ are subject to the following constraints
$$y_a=\prod_{j=1}^{m'} C_j^{d_{aj}},\ a=1,\ldots,r'.$$
This defines a family of functions $\{W^{HV}_\mathcal{X}(y)\}$ parametrized by $y=(y_1,\ldots,y_{r'})\in \CM^\mathcal{X}_B$
\end{defn}

On the other hand, by identifying $\mathbb{L}^\vee\otimes\C$ with the subspace
$$H^2(\mathcal{X})\oplus\bigoplus_{j=m+1}^{m'} H^0(\mathcal{X}_{\bb_j})\subset H^{\leq2}_\mathrm{orb}(\mathcal{X}),$$
we will regard $\CM^\mathcal{X}_A:=\mathbb{L}^\vee\otimes\C^*$ also as the {\em $A$-model moduli space} for $\mathcal{X}$. We equip $\CM^\mathcal{X}_A$ with another set of $\C^*$-valued coordinates $q_1,\ldots,q_{r'}$ corresponding to the same basis $d_1,\ldots,d_{r'}\in\mathbb{L}$. Since $\mathbb{L}^\vee\otimes\C=H^2(\mathcal{X})\oplus\bigoplus_{j=m+1}^{m'} H^0(\mathcal{X}_{\bb_j})$, we can write an element $\tau\in\mathbb{L}^\vee\otimes\C$ as $\tau=\tau_{0,2}+\tau_\mathrm{tw}$ where
\begin{align*}
\tau_{0,2} & = \sum_{a=1}^r\tau_a \bar{p}_a\in H^2(\mathcal{X}),\\
\tau_\mathrm{tw} & =\sum_{a=r+1}^{r'}\tau_a\mathbf{1}_{\bb_{m+a-r}}\in\bigoplus_{j=m+1}^{m'} H^0(\mathcal{X}_{\bb_j}).
\end{align*}
This defines the coordinates $q_a=\exp(\tau_a)$ for $a=1,\ldots,r$ and $\tau_a$ for $a=r+1,\ldots,r'$ on $\CM^\mathcal{X}_A$. Note that the coordinates in the orbifold directions are {\em not} exponentiated coordinates.

We can now define a LG superpotential using Lagrangian Floer theory in terms of the open orbifold GW invariants $n_{1,l,\beta}^\mathcal{X}([\mathrm{pt}]_{L};\mathbf{1}_{\nu_1},\ldots,\mathbf{1}_{\nu_l})$. For $u\in\mathrm{int}(P)$, let $L:=L(u)\subset \mathcal{X}$ be the corresponding Lagrangian torus fiber of the moment map.
\begin{defn}\label{W^LF}
The {\em Lagrangian Floer superpotential} of $\mathcal{X}$ is the function $W^{LF}_\mathcal{X}:\check{\mathcal{X}} \to \C$ defined by
\begin{align*}
W^{LF}_\mathcal{X} & = \sum_{\beta\in\pi_2(\mathcal{X},L)}\sum_{l\geq0}
\frac{1}{l!}n_{1,l,\beta}^\mathcal{X}([\mathrm{pt}]_{L};\tau_\mathrm{tw},\ldots,\tau_\mathrm{tw})Z_\beta\\
& =\sum_{\beta\in\pi_2(\mathcal{X},L)}\sum_{l\geq0}\sum_{a_1,\ldots,a_l}
\frac{\tau_{a_1}\cdots\tau_{a_l}}{l!}n_{1,l,\beta}^\mathcal{X}([\mathrm{pt}]_{L};\mathbf{1}_{\nu_{m+a_1-r}},\ldots,\mathbf{1}_{\nu_{m+a_l-r}})Z_\beta,
\end{align*}
where $Z_\beta$ is the monomial given by
\begin{eqnarray*}
Z_\beta(u,\theta)=\exp\left(-\int_\beta\omega+2\pi\consti\langle \partial\beta,\theta\rangle\right),
\end{eqnarray*}
the third summation is over all $a_1,\ldots,a_l\in\{r+1,\ldots,r'\}$. The superscript ``LF" refers to ``Lagrangian Floer".
\end{defn}
Here, if $\beta=\sum_{j=1}^m k_j\beta_j+\sum_{j=m+1}^{m'}k_j\beta_{\nu_j}+d$ where $d\in H_2^\mathrm{eff}(\mathcal{X})$, then $\partial\beta=\sum_{j=1}^m k_j\bb_j+\sum_{\nu\in\mathrm{Box}'}k_\nu\nu\in N$, so that $Z_\beta=q^d\prod_{j=1}^{m'}Z_j^{k_j}$, where $q^d=q_1^{\langle\bar{p}_1,d\rangle}\cdots q_r^{\langle\bar{p}_r,d\rangle}$ and $Z_j=C_jz^{\bb_j}$ for $j=1,\ldots,m'$, are monomials such that the coefficients $C_j$ are subject to the following constraints:
\begin{enumerate}
\item[(1)] For $a=1,\ldots,r$, the element $d_a\in\mathbb{L}$ is a class in $H_2(\mathcal{X};\rat)$ and the constraint is given by
    $$q_a=\prod_{j=1}^{m'} C_j^{d_{aj}}=\prod_{j=1}^m C_j^{d_{aj}}.$$
\item[(2)] For $a=r+1,\ldots,r'$, the element $d_a\in\mathbb{L}$ corresponds to the relation $\sum_{j=1}^{m'}d_{aj}\bb_j=0$. For $j=m+1,\ldots,m'$, write $\bb_j=\sum_{k=1}^m t_{jk}\bb_k$. Then the previous relation can be rewritten as
    $$\sum_{j=1}^m\left(d_{aj}+\sum_{k=m+1}^{m'}d_{ak}t_{kj}\right)\bb_j=0.$$
    This corresponds to a class $\tilde{d}_a\in H_2(\mathcal{X};\rat)$, and the constraint is given by
    $$q^{\tilde{d}_a}=\prod_{j=1}^{m'} C_j^{d_{aj}}.$$
\end{enumerate}
We emphasize that the coefficients $C_j$'s only depend on the exponentiated coordinates $q_1,\ldots,q_r$, but {\em not} the orbifold parameters $\tau_{r+1},\ldots,\tau_{r'}$. Also note that we need to choose the branches of fractional powers of $q_a$ for $a=1,\ldots,r$ due to the orbifold structure near the cusp in $\CM^\mathcal{X}_A$. Altogether this defines a family of functions $\{W^{LF}_\mathcal{X}(q)\}$ parametrized by $q=(q_1,\ldots,q_r,\tau_{r+1},\ldots,\tau_{r'})\in \CM^\mathcal{X}_A$.

Throughout this paper, we assume that the infinite sum on the right hand side of the above definition converges. Strictly speaking, the above just defines a $\Lambda_0$-valued function where $\Lambda_0$ is the Novikov ring. Assuming convergence, then both $W^{HV}_\mathcal{X}$ and $W^{LF}_\mathcal{X}$ are holomorphic functions on $\check{\mathcal{X}}$ and can be analytically continued to the whole $M_{\C^*}$.

For each $\beta\in\pi_2(\mathcal{X},L)$, the coefficient of $Z_\beta$ is the generating function
$$\sum_{l\geq0}\sum_{a_1,\ldots,a_l}
\frac{1}{l!}n_{1,l,\beta}^\mathcal{X}([\mathrm{pt}]_{L};\mathbf{1}_{\nu_{m+a_1-r}},\ldots,\mathbf{1}_{\nu_{m+a_l-r}})\tau_{a_1}\cdots\tau_{a_l}$$
of open orbifold GW invariants. When $l=0$, $n_{1,0,\beta}([\mathrm{pt}]_{L})$ counts the virtual number of stable smooth holomorphic discs representing $\beta$; when $l=1$,  $n_{1,1,\beta}([\mathrm{pt}]_{L};\mathbf{1}_\nu)$ counts the virtual number of stable holomorphic orbi-discs with one interior orbifold marked point mapping to the twisted sector $\mathcal{X}_\nu$ representing $\beta$.

Following \cite{CP}, we define the {\em leading order superpotential} $W^{LF}_{\mathcal{X},0}$ to be
\begin{equation*}
W^{LF}_{\mathcal{X},0}:=\sum_{j=1}^m n_{1,0,\beta_j}^\mathcal{X}([\mathrm{pt}]_{L})Z_{\beta_j}+\sum_{j=m+1}^{m'} n_{1,1,\beta_{\nu_j}}^\mathcal{X}([\mathrm{pt}]_{L};\mathbf{1}_{\nu_j})\tau_{r+j-m}Z_{\beta_{\nu_j}};
\end{equation*}
By Corollary \ref{openGW_basic}, $W^{LF}_{\mathcal{X},0}$ can be written as
\begin{align*}
W^{LF}_{\mathcal{X},0} = \sum_{j=1}^m Z_{\beta_j}+\sum_{j=m+1}^{m'}\tau_{r+j-m}Z_{\beta_{\nu_j}} = \sum_{j=1}^m C_j z^{\bb_j}+\sum_{j=m+1}^{m'} C_j \tau_{r+j-m}z^{\bb_j},
\end{align*}
where the coefficients $C_j$ ($j=1,\ldots,m'$) are subject to the following constraints
$$q^{\tilde{d}_a}=\prod_{j=1}^{m'} C_j^{d_{aj}},\ a=1,\ldots,r'.$$
Note that the terms in the extended Hori-Vafa superpotential $W^{HV}_\mathcal{X}$ are in a one-to-one correspondence with those in $W^{LF}_{\mathcal{X},0}$. In view of this, we may regard both $W^{HV}_\mathcal{X}$ and $W^{LF}_{\mathcal{X},0}$ as counting only the basic holomorphic (orbi-)discs. The remaining higher order terms in $W^{LF}_\mathcal{X}$ are instanton corrections coming from virtual counting of non-basic holomorphic orbi-discs. The open mirror theorem below asserts that these are precisely the correction terms that we get when we plug in the mirror map into $W^{HV}_\mathcal{X}$.

\subsection{An open mirror theorem}\label{sec:open_mirror_thm}

Let us first recall the mirror theorem for toric orbifolds following Iritani \cite{iritani09}. Consider the subsets
\begin{align*}
\mathbb{K} & =\{d\in\mathbb{L}\otimes\rat\mid\{j\in\{1,\ldots,m'\}\mid\langle D_j,d\rangle\in\Z\}\in\mathcal{A}\},\\
\mathbb{K}_\mathrm{eff} & =\{d\in\mathbb{L}\otimes\rat\mid\{j\in\{1,\ldots,m'\}\mid\langle D_j,d\rangle\in\Z_{\geq0}\}\in\mathcal{A}\},
\end{align*}
where $\mathcal{A}$ is the set of so-called ``anticones". Basically, $\mathbb{K}_\mathrm{eff}$ is the set of effective classes; we refer the reader to \cite[Section 3.1]{iritani09} for the precise definitions. For any real number $r\in\R$, we denote by $\lceil r \rceil$, $\lfloor r \rfloor$ and $\{r\}$ the ceiling, floor and fractional part of $r$ respectively. Then for $d\in\mathbb{K}$, we define
$$\nu(d):=\sum_{j=1}^{m'}\lceil\langle D_j,d\rangle\rceil\bb_j\in N.$$
Notice that we can write
$$\nu(d)=\sum_{j=1}^{m'}(\{-\langle D_j,d\rangle\}+\langle D_j,d\rangle)\bb_j=\sum_{j=1}^{m'}\{-\langle D_j,d\rangle\}\bb_j,$$
so $\nu(d)\in\mathrm{Box}$ and hence it corresponds to a twisted sector $\mathcal{X}_{\nu(d)}$ of $\mathcal{X}$.
\begin{defn}
The {\em $I$-function} of a toric orbifold $\mathcal{X}$ is an $H^*_\mathrm{orb}(\mathcal{X})$-valued power series on $\CM^\mathcal{X}_B$ defined by
\begin{align*}
I_\mathcal{X}(y,z)= & \conste^{\sum_{a=1}^r\bar{p}_a\log y_a/z}\\
& \quad\left(\sum_{d\in\mathbb{K}_\mathrm{eff}}y^d\frac{\prod_{j:\langle D_j,d\rangle<0}\prod_{k\in[\langle D_j,d\rangle,0)\cap\Z}(\bar{D}_j+(\langle D_j,d\rangle-k)z)}{\prod_{j:\langle D_j,d\rangle>0}\prod_{k\in[0,\langle D_j,d\rangle)\cap\Z}(\bar{D}_j+(\langle D_j,d\rangle-k)z)}\mathbf{1}_{\nu(d)}\right),
\end{align*}
where $y^d=y_1^{\langle p_1,d\rangle}\cdots y_{r'}^{\langle p_{r'},d\rangle}$ and $\mathbf{1}_{\nu(d)}\in H^0(\mathcal{X}_{\nu(d)})\subset H^{2\iota(\nu(d))}_\mathrm{orb}(\mathcal{X})$ is the fundamental class of the twisted sector $\mathcal{X}_{\nu(d)}$.
\end{defn}
Under Assumption \ref{assumption}, the $I$-function is a convergent power series in $y_1,\ldots,y_{r'}$ by \cite[Lemma 4.2]{iritani09}. Moreover, it can be expanded as
$$I_\mathcal{X}(y,z)=1+\frac{\tau(y)}{z}+O(z^{-2}),$$
where $\tau(y)$ is a (multi-valued) function with values in $H^{\leq2}_\mathrm{orb}(\mathcal{X})$. We call $q=\exp \tau(y)$ the {\em mirror map}. It defines a local isomorphism near $y=0$ (\cite[Section 4.1]{iritani09}).

On the other hand, we have the following
\begin{defn}
The {\em (small) $J$-function} of a toric orbifold $\mathcal{X}$ is an $H^*_\mathrm{orb}(\mathcal{X})$-valued power series on $\CM^\mathcal{X}_A$ defined by
\begin{align*}
J_\mathcal{X}(q,z)= & \conste^{\tau_{0,2}/z}\left(1+\sum_\alpha\sum_{\stackrel{(d,l)\neq(0,0)}{d\in H_2^\mathrm{eff}(\mathcal{X})}}\frac{q^d}{l!}\left\langle 1,\tau_\mathrm{tw},\ldots,\tau_\mathrm{tw},\frac{\phi_\alpha}{z-\psi}\right\rangle^\mathcal{X}_{0,l+2,d} \phi^\alpha\right),
\end{align*}
where $\log q=\tau=\tau_{0,2}+\tau_\mathrm{tw}\in H^2_\mathrm{orb}(\mathcal{X})$ with $\tau_{0,2}=\sum_{a=1}^r \bar{p}_a\log q_a\in H^2(\mathcal{X})$ and $\tau_\mathrm{tw}=\sum_{a=r+1}^{r'}\tau_a\mathbf{1}_{\bb_{m+a-r}}\in\bigoplus_{j=m+1}^{m'} H^0(\mathcal{X}_{\bb_j})$, $q^d=\conste^{\langle \tau_{0,2},d\rangle}=q_1^{\langle\bar{p}_1,d\rangle}\cdots q_r^{\langle\bar{p}_r,d\rangle}$, $\{\phi_\alpha\}$, $\{\phi^\alpha\}$ are dual basis of $H^*_\mathrm{orb}(\mathcal{X})$ and $\langle\cdots\rangle^\mathcal{X}_{0,l+2,d}$ denote closed orbifold GW invariants.
\end{defn}

Now the mirror theorem  for the toric orbifold $\mathcal{X}$ states that the $J$-function can be obtained from the $I$-function via the mirror map.
This has been recently proved by Coates, Corti, Iritani and the fourth author \cite[Theorem 36]{CCIT13}; see also the formulation in \cite[Section 4.1]{iritani09}.
\begin{theorem}[Closed Mirror Theorem]\label{closed_mirror_thm}
Let $\mathcal{X}$ be a compact toric K\"ahler orbifold satisfying Assumption \ref{assumption}. Then we have
\begin{equation*}
J_\mathcal{X}(q,z)=I_\mathcal{X}(y(q),z),
\end{equation*}
where $y=y(q)$ is the inverse of the mirror map $q=q(y)$.
\end{theorem}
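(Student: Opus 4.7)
The plan is to work within Givental's symplectic formalism, suitably adapted to the orbifold setting by Tseng and by Coates--Corti--Iritani--Tseng. Let $\mathcal{H} := H^*_{\mathrm{orb}}(\mathcal{X}) \otimes \C((z^{-1}))$, equipped with the symplectic form
\begin{equation*}
\Omega(f,g) = \mathrm{Res}_{z=0}\pairing{f(-z)}{g(z)}_{\mathrm{orb}}\, dz,
\end{equation*}
and let $\CL_\mathcal{X} \subset \mathcal{H}$ be the overruled Lagrangian cone generated by the genus-zero descendant orbifold GW theory of $\mathcal{X}$. The standard reformulation says that a (multi-valued) family $z \cdot \mathbf{1} + \tau + O(1/z)$ lies on $\CL_\mathcal{X}$ if and only if it agrees, up to reparametrisation by a string/divisor-type symmetry and by the choice of $\tau$, with the big $J$-function; in particular the small $J_\mathcal{X}(q,z)$ is characterised as the unique such family whose first two terms in the $1/z$-expansion are $z + \tau(q)$.

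With that framework in place, the proof reduces to two statements: (i) $z \cdot I_\mathcal{X}(y,z)$ lies on $\CL_\mathcal{X}$ for every $y$, and (ii) the mirror map $q = q(y)$ is precisely the reparametrisation that identifies the $I$-function with $J_\mathcal{X}$. Step (ii) is essentially the definition of the mirror map: one reads off the $z^0$-term of $I_\mathcal{X}/I_0$ after normalising the $z^1$-term, uses the compatibility with the string and divisor equations on $\CL_\mathcal{X}$, and inverts the resulting local diffeomorphism $q = q(y)$ on a neighbourhood of the large-radius cusp; convergence near the cusp is ensured by Assumption \ref{assumption} together with \cite[Lemma 4.2]{iritani09}. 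Step (i) is the genuine content and is precisely \cite[Theorem 36]{CCIT13}, and here I would reproduce their strategy.

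To carry out Step (i), I would first realise the toric orbifold $\mathcal{X}$ as a $T = N \otimes \C^*$-equivariant quotient $[U(\Sigma)/K_\C]$ and upgrade everything to $T$-equivariant orbifold GW theory over the base $H^*_T(\mathrm{pt})$. Equivariant Atiyah--Bott localisation applied to the moduli spaces $\overline{\CM}_{0,1}(\mathcal{X},d)$ of stable orbifold maps with one marked point expresses the equivariant $J$-function as a sum over decorated (orbifold) graphs in the $T$-fixed locus; the vertex contributions involve Hodge and $\psi$-class integrals on moduli of orbifold curves, and the edge contributions are products of equivariant weights. Next I would analyse the hypergeometric modification of $I_\mathcal{X}$: the ratios of products over the intervals $[\langle D_j,d\rangle,0) \cap \Z$ and $[0,\langle D_j,d\rangle) \cap \Z$ are exactly the equivariant Euler classes of the virtual normal bundles of the distinguished ``simple'' fixed loci (those coming from the $T$-fixed orbi-maps to a single torus-invariant orbi-disc, with the twisted sector $\mathcal{X}_{\nu(d)}$ appearing from the age contribution of the stacky point). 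Showing that $I_\mathcal{X}$ lies on the cone then amounts to a recursion/uniqueness argument à la Givental: one proves that any $\mathcal{H}$-valued family satisfying a prescribed set of polynomiality properties in $z$, together with the same recursion in poles at the equivariant weights $\lambda$ as dictated by the graph sum, must coincide with the equivariant $J$-function after a mirror transformation. Both $z \cdot I_\mathcal{X}$ and $z \cdot J_\mathcal{X}$ satisfy these properties, so they lie on the same cone; taking the non-equivariant limit yields the theorem.

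The main obstacle, and the place where one really needs the CCIT machinery, is verifying the hypergeometric $I$-function satisfies the recursive/pole characterisation in the \emph{orbifold} setting: one must track age shifts and twisted-sector insertions $\mathbf{1}_{\nu(d)}$ carefully, and the natural ``extended'' variables $y_{r+1},\ldots,y_{r'}$ (corresponding to the box elements $\bb_{m+1},\ldots,\bb_{m'}$ with $\iota(\nu) \leq 1$) are essential to keep the recursion closed, because without them the twisted-sector directions of the $J$-function would not be attainable from the cone. Handling these extended variables is precisely what Assumption \ref{assumption}(2) is set up for, and once the polynomiality-plus-recursion characterisation of $\CL_\mathcal{X}$ is in place for orbifolds, the identification $J_\mathcal{X}(q,z) = I_\mathcal{X}(y(q),z)$ follows by restricting the equality of cones to the small parameter slice and then inverting the mirror map.
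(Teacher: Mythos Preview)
The paper does not give its own proof of this theorem: it is quoted as a result of Coates--Corti--Iritani--Tseng \cite[Theorem~36]{CCIT13} (with the formulation taken from \cite[Section~4.1]{iritani09}), and is used as a black box elsewhere in the paper. Your proposal is therefore not competing against any argument in the paper; rather, you are sketching the CCIT proof itself.

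As a sketch of CCIT, your outline is broadly on target: the statement does reduce to showing that the (extended) $I$-function determines a family of points on Givental's overruled Lagrangian cone $\CL_\mathcal{X}$, after which the identification with the small $J$-function follows from the standard asymptotic characterisation together with the definition of the mirror map $q=q(y)$ read off from the $1/z$-term. The points you flag --- the need for $T$-equivariant localisation, the graph/recursion characterisation of points on the cone, the role of the extra variables $y_{r+1},\ldots,y_{r'}$ attached to box elements with $\iota(\nu)\le 1$, and the age/twisted-sector bookkeeping encoded in $\mathbf{1}_{\nu(d)}$ --- are indeed the essential ones. One caveat: filling in Step~(i) rigorously in the orbifold setting is substantial, and CCIT do not simply rerun the classical Givental recursion verbatim; the orbifold adaptation of the polynomiality-plus-recursion characterisation (and the compatibility with the extended fan data) is where most of the work lies. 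Your write-up acknowledges this but treats it as something one could ``reproduce'', which understates the effort. For the purposes of this paper, citing \cite{CCIT13} is what the authors do, and that is also what you should do rather than attempt a self-contained proof.
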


In terms of the extended Hori-Vafa and Lagrangian Floer superpotentials, we suggest the following open string version of the toric mirror theorem:
\begin{conjecture}[Open Mirror Theorem]\label{open_mirror_thm}
Let $\mathcal{X}$ be a compact toric K\"ahler orbifold satisfying Assumption \ref{assumption}, and let $W^{HV}_\mathcal{X}(y)$ and $W^{LF}_\mathcal{X}(q)$ be the extended Hori-Vafa and Lagrangian Floer superpotentials respectively. Then, up to a change of coordinates on $M_{\C^*}$, we have
\begin{equation*}
W^{LF}_\mathcal{X}(q)=W^{HV}_\mathcal{X}(y(q)),
\end{equation*}
where $y=y(q)$ is the inverse of the mirror map $q=q(y)$.
\end{conjecture}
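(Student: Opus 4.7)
The plan is to reduce the open mirror theorem to the closed mirror theorem (Theorem \ref{closed_mirror_thm}) by means of an open/closed equality that expresses the open orbifold GW invariants appearing in $W^{LF}_\mathcal{X}$ as closed orbifold GW invariants. By Corollary \ref{openGW_basic}, the leading-order part $W^{LF}_{\mathcal{X},0}$ already matches $W^{HV}_\mathcal{X}$ term-by-term under the natural correspondence between basic disc classes and stacky vectors/twisted sectors. Thus the entire content of the conjecture is that the non-basic corrections in $W^{LF}_\mathcal{X}$ (those indexed by a class $\beta$ not among the basic ones, or involving $l \geq 2$ twisted-sector insertions) coincide precisely with the corrections produced when the inverse mirror map $y(q)$ is substituted into the combinatorial $W^{HV}_\mathcal{X}$. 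In particular, $\tau_{\mathrm{tw}}$ plays the role of the bulk parameter on both sides, and the mirror-map series $q(y)$ should absorb the higher-order disc counts.

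\textbf{Key steps.} First, I would establish an open/closed equality (the content of Theorem \ref{THM_open_closed}) identifying each $n^\mathcal{X}_{1,l,\beta}([\mathrm{pt}]_L; \mathbf{1}_{\nu_{i_1}},\ldots, \mathbf{1}_{\nu_{i_l}})$ with a suitable closed orbifold GW invariant on an auxiliary non-compact toric Calabi-Yau orbifold $\WT{\mathcal{X}}$ built from $\mathcal{X}$ (e.g.\ the total space of the canonical line bundle, following the manifold treatment of \cite{Chan10, LLW10}). The geometric idea is to lift a stable orbi-disc in $\mathcal{X}$ bounded by a moment fiber to a rational orbi-curve in $\WT{\mathcal{X}}$ meeting the zero section with matching isotropy data $h_i$, and then to compare the virtual fundamental cycles using $T^n$-equivariant localization. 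Second, I would repackage $W^{LF}_\mathcal{X}(q)$ as a slice of the small $J$-function $J_{\WT{\mathcal{X}}}$ along the $H^{\leq 2}_{\mathrm{orb}}$ directions: the sum $\sum_{l\geq 0} \frac{1}{l!} n^\mathcal{X}_{1,l,\beta}(\tau_{\mathrm{tw}},\ldots,\tau_{\mathrm{tw}})$ is exactly the bulk-insertion structure of $J_{\WT{\mathcal{X}}}$ at bulk parameter $\tau_{\mathrm{tw}}$. Third, I would apply Theorem \ref{closed_mirror_thm} to $\WT{\mathcal{X}}$, giving $J_{\WT{\mathcal{X}}}(q,z) = I_{\WT{\mathcal{X}}}(y(q),z)$; extracting the coefficient of each monomial $z^{\bb}$ (for $\bb \in N$) on both sides and reorganizing by basic disc class should recover $W^{HV}_\mathcal{X}(y(q))$ on the right, yielding the desired identity up to a change of coordinates on $M_{\C^*}$ aligning the monomial conventions.

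\textbf{Main obstacle.} The hardest step is the first one. In the manifold case the open/closed correspondence is clean because Maslov-index-two discs in semi-Fano toric manifolds have no sphere bubbles in the minimal class, so that localization cleanly matches moduli. In the orbifold setting one must handle orbi-sphere bubbles attached to basic orbi-discs (as illustrated in the right-hand side of Figure \ref{orbidisc}), ensure compatibility of the injective local group homomorphisms $h_i : \Z_{m_i} \to G_{w(z_i^+)}$ at each interior marked point on both sides of the correspondence, and construct $T^n$-equivariant multisections that work simultaneously on the open moduli spaces $\CM^{main}_{1,l}(L,\beta,\bx)$ and the relevant closed moduli spaces of $\WT{\mathcal{X}}$. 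Verifying that the perturbed virtual counts agree is the core technical hurdle, and this is exactly why the paper restricts its unconditional proof (Theorem \ref{open_CRC_wPn_intro}) to $\mathcal{X} = \proj(1,\ldots,1,n)$, where the relevant twisted sectors and disc combinatorics are especially tractable. Convergence of $W^{LF}_\mathcal{X}$ (needed to interpret the identity as one of holomorphic functions rather than formal series) will then follow from the convergence of the $I$-function under Assumption \ref{assumption}, by \cite[Lemma 4.2]{iritani09}.
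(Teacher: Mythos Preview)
Note first that the statement is a \emph{conjecture} in the paper; there is no proof of it in general. The paper only proves the special case $\mathcal{X}=\proj(1,\ldots,1,n)$ (Theorem~\ref{open_mirror_thm_wPn}). Your broad strategy---reduce to the closed mirror theorem via an open/closed equality---is indeed the paper's approach in that case, but your implementation diverges in two concrete ways. First, the auxiliary space in Theorem~\ref{THM_open_closed} is \emph{not} the total space of the canonical bundle: for each basic disc class $\beta$ the paper adds the ray $\bb_\infty=-\partial\beta$ to the stacky fan, producing a \emph{compact} toric orbifold $\bar{\mathcal{X}}$ (for $\proj(1,\ldots,1,n)$ and $\beta=\beta_{1/n}$ one simply gets $\bar{\mathcal{X}}=\mathcal{X}$ back). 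The disc is capped off by the basic disc $\beta_\infty$ in the opposite direction, not lifted to a curve meeting a zero section. Second, the comparison of virtual cycles is not done by equivariant localization but by a direct identification of Kuranishi charts: the (orbi-)disc component is regular, so all obstruction lives on the constant rational-curve tail, which is literally the same on both sides.

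Your Step~3 also has a gap. In the paper the link between $J_\mathcal{X}$ and the superpotential is made by extracting the $H^0$-part of the $1/z^2$-coefficient of $J_\mathcal{X}$ and $I_\mathcal{X}$ (here $z$ is the descendent variable, unrelated to the torus coordinates $z_k$ on $M_{\C^*}$). This coefficient equals $q^{\bar\beta}\sum_{l\ge 0}\frac{\tau_2^l}{l!}\langle[\mathrm{pt}],\mathbf{1}_{1/n},\ldots\rangle_{0,l+1,\bar\beta}$, and via the open/closed equality becomes exactly the nontrivial coefficient of $W^{LF}_\mathcal{X}$; on the $I$-side the same coefficient is computed combinatorially to be $y_2$, which is the corresponding coefficient of $W^{HV}_\mathcal{X}$. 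Your proposal to ``extract the coefficient of each monomial $z^{\bb}$'' conflates the two uses of $z$ and does not by itself recover $W^{HV}_\mathcal{X}$; one needs the specific $H^0$-coefficient argument (or an analogous one per basic class) to close the loop. In general (beyond $\proj(1,\ldots,1,n)$) this extraction step, together with proving the open/closed equality for \emph{all} relevant $\beta$, is precisely what remains open.
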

This is the orbifold version of the open toric mirror theorem conjectured by Chan-Lau-Leung-Tseng \cite{CLLT11}. Since $W^{HV}_\mathcal{X}$ and the mirror map are combinatorially defined and can be written down explicitly, the open toric mirror theorem can be used to compute the open orbifold GW invariants $n_{1,l,\beta}^\mathcal{X}([\mathrm{pt}]_{L};\mathbf{1}_{\nu_1},\ldots,\mathbf{1}_{\nu_l})$. In Section \ref{examples}, we will prove Conjecture \ref{open_mirror_thm} for the weighted projective spaces $\mathcal{X}=\proj(1,\ldots,1,n)$ using a formula (Theorem \ref{THM_open_closed}) which equates open and closed orbifold GW invariants for Gorenstein toric Fano orbifolds.

\section{An open crepant resolution conjecture}\label{sec:open_CRC}

In this section, we shall formulate an open string version of the crepant resolution conjecture for toric orbifolds, which says that the Lagrangian Floer superpotentials for a Gorenstein toric orbifold $\mathcal{X}$ and a toric crepant resolution $Y$ coincide after analytic continuation of the Lagrangian Floer superpotential for $Y$ and a suitable change of variables.

\subsection{Formulation of the conjecture}\label{sec:formulation_openCRC}

\begin{defn}
An orbifold $\mathcal{X}$ is called {\em Gorenstein} if its canonical divisor $K_\mathcal{X}$ is Cartier.
\end{defn}

\begin{lemma}
If $\mathcal{X}$ is Gorenstein toric orbifold, then $\mu(\beta) \geq 2$ for any basic disc class $\beta\in \pi_2(\mathcal{X},L)$.
\end{lemma}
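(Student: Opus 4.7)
The plan is to reduce to the classification of basic disc classes (the Corollary following the statement of $\pi_2(\mathcal{X},L)$'s generators) and then use the Gorenstein hypothesis combinatorially. Recall that basic disc classes split into two families: the smooth classes $\beta_j$ associated to the stacky vectors $\bb_j$ ($j=1,\ldots,m$), which already have Chern--Weil Maslov index equal to $2$, and the orbi-disc classes $\beta_\nu$ associated to twisted sectors $\nu \in \mathrm{Box}'$, which have desingularized Maslov index $0$ and hence, by the relation \eqref{eq:1}, satisfy $\mu_{CW}(\beta_\nu)=2\iota(\nu)$. Thus the inequality $\mu(\beta)\geq 2$ is immediate for the smooth $\beta_j$, and the content of the lemma is the bound $\iota(\nu)\geq 1$ for every $\nu\in\mathrm{Box}'$.

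To establish this, I would first invoke the standard toric characterization of the Gorenstein condition: since $K_\mathcal{X}=-\sum_{j=1}^m D_j$, its Cartier property is equivalent to the existence of a lattice element $u^\ast\in M$ such that
\begin{equation*}
\langle u^\ast, \bb_j\rangle = 1\quad\text{for every stacky vector }\bb_j,\ j=1,\ldots,m.
\end{equation*}
Geometrically this is the familiar statement that the stacky generators all lie on the affine hyperplane $\{u^\ast = 1\}\subset N_\R$.

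Next I would apply the explicit formula for the degree shifting number in the toric setting: if $\nu=\sum_{k=1}^d t_k\bb_{i_k}\in\mathrm{Box}$ with $t_k\in [0,1)\cap\rat$, then $\iota(\nu)=\sum_k t_k$. Pairing with $u^\ast$ and using the Gorenstein identity gives
\begin{equation*}
\iota(\nu)=\sum_{k=1}^d t_k = \sum_{k=1}^d t_k\langle u^\ast,\bb_{i_k}\rangle = \langle u^\ast, \nu\rangle \in \Z,
\end{equation*}
since $\nu\in N$ and $u^\ast\in M$. Because $\nu\in\mathrm{Box}'$ is nonzero at least one coefficient $t_k$ is strictly positive, so $\iota(\nu)>0$; being a positive integer, $\iota(\nu)\geq 1$. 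Consequently $\mu_{CW}(\beta_\nu)=2\iota(\nu)\geq 2$, which combined with the smooth case completes the proof.

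The only point that requires care is the justification of the two combinatorial inputs, namely the characterization of $K_\mathcal{X}$ Cartier via $u^\ast$ and the formula $\iota(\nu)=\sum t_k$; both are standard in the toric/orbifold literature (see \cite{BCS} and the references therein) and fit naturally with the conventions used in Section~\ref{sec:toricorbifold}. I do not foresee any genuine technical obstacle.
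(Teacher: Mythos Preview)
Your overall strategy is correct and matches the paper's proof, which simply asserts that ``being Gorenstein implies that $\iota(\nu)$ is a positive integer'' without further justification; you supply the combinatorial argument behind this assertion.

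One correction: your characterization of the Gorenstein condition is too strong as stated. The Cartier property of $K_\mathcal{X}$ does \emph{not} in general give a single global $u^\ast\in M$ with $\langle u^\ast,\bb_j\rangle=1$ for all $j$; already for $\proj^2$ with rays $(1,0),(0,1),(-1,-1)$ no such $u^\ast$ exists. The correct statement is local: for each maximal cone $\sigma\in\Sigma$ there exists $u_\sigma\in M$ with $\langle u_\sigma,\bb_j\rangle=1$ for every stacky generator $\bb_j$ of $\sigma$. This is all you need, since a given $\nu\in\mathrm{Box}'$ lies in some cone $\sigma$ and its expansion $\nu=\sum_k t_k\bb_{i_k}$ involves only generators of $\sigma$; pairing with $u_\sigma$ then gives $\iota(\nu)=\langle u_\sigma,\nu\rangle\in\Z$ exactly as you wrote. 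With this local adjustment the argument is complete.
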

\begin{proof}
When $\beta$ is a basic disc class represented by a smooth holomorphic disc, its Maslov index $\mu_{CW}(\beta) = 2$ as in the case of smooth toric manifolds (see  \cite{Cho06} or \cite{CP}).

Consider a basic disc class $\beta_\nu$ for some $\nu \in \mathrm{Box}'$, which is represented by a holomorphic orbi-disc. It was proved in \cite{CP} that $\mu^{de}(\beta_\nu,\mathcal{X}_\nu)=0$. Hence, from Formula \eqref{eq:1}, we have
$$\mu_{CW}(\beta_\nu) = \mu^{de}(\beta_\nu,\mathcal{X}_\nu) + 2 \iota(\nu) = 2 \iota(\nu) \geq 2$$
since being Gorenstein implies that $\iota(\nu)$ is a positive integer.
\end{proof}

Now let $X=X_\Sigma$ be a projective Gorenstein toric variety of complex dimension $n$ defined by a complete fan $\Sigma$ of simplicial rational polyhedral cones. Then $X$ has at worse quotient singularities, and there is a canonical Gorenstein toric orbifold $\mathcal{X}=\mathcal{X}_\Sigma$ with coarse moduli space being $X$ and orbifold structures occurring in complex codimension at least two. We assume that $\mathcal{X}$ satisfies Assumption \ref{assumption}. Note that the canonical toric orbifold $\mathcal{X}$ is the one described by the canonical stacky fan in Borisov-Chen-Smith \cite[Section 7]{BCS}.

\begin{remark}
Let us explain the choice of this canonical $\mathcal{X}$. Given $X$, there can be (infinitely) many orbifolds with coarse moduli space $X$. However, if $\mathcal{X}$ is the canonical toric orbifold associated to $X$ and $p:\mathcal{X}\to X$ is the coarse moduli space map, then $K_\mathcal{X}=p^*K_X$ since p is an isomorphism in complex codimension one. We can understand this as saying that the canonical orbifold $\mathcal{X}$ is a crepant resolution of $X$ since $\mathcal{X}$ is a smooth orbifold, and $p$ is birational and crepant.

On the other hand, all other toric orbifolds with coarse moduli space $X$ are obtained from this canonical $\mathcal{X}$ by root constructions along toric divisors (see \cite{FMN10}). A root construction along a toric divisor introduces orbifoldness along the divisor and changes the canonical divisor by a multiple of that divisor. For example, if $\mathcal{X}'$ is a toric orbifold obtained from the canonical $\mathcal{X}$ by a $r$-th root construction along the toric divisor $D$. Then $X$ is also the coarse moduli space of $\mathcal{X}'$ and the coarse moduli space map $p': \mathcal{X}'\to X$ is birational, but $p'$ is {\em not} crepant since $K_{\mathcal{X}'}=p'^*(K_X+(r-1)/rD)$. So we cannot consider $\mathcal{X}'$ as a crepant resolution of $X$ and consequently $\mathcal{X}'$ is not suited for CRC.
\end{remark}

Let $\pi:Y\to X$ be a toric crepant resolution. Notice that since $\mathcal{X}$ is semi-Fano, so is $Y$, i.e. $c_1(\alpha)\geq0$ for any effective curve class $\alpha\in H_2(Y;\Z)$. Let $\Sigma_Y$ be the fan in $N_\R$ defining $Y$. Then the set of primitive generators of the rays in $\Sigma_Y$ is given by $\Sigma_Y^{(1)}=\{\bb_1,\ldots,\bb_{m'}\}$. Let $\{\alpha_1,\ldots,\alpha_{r'}\}$ be a positive basis for $H_2(Y;\Z)$ such that the classes $\{\pi_*\alpha_1,\ldots,\pi_*\alpha_r\}$ gives precisely the positive basis $\{d_1,\ldots,d_r\}$ for $H_2(\mathcal{X};\Z)$, where $\pi_*:H_*(Y;\rat)\to H_*(\mathcal{X};\rat)$ is the natural push-forward map which is surjective. By identifying $H_2(Y;\Z)$ with $\mathbb{L}$, we can indeed choose
$$\alpha_a=d_a=\sum_{j=1}^{m'} d_{aj}e_j$$
for $a=1,\ldots,r'$. Let $\CM^Y_A:=H^2(Y;\C^*)\cong \mathbb{L}^\vee\otimes\C^*$ be the $A$-model moduli space for $Y$. Then the basis $d_1,\ldots,d_{r'}$ defines $\C^*$-valued coordinates $Q_1,\ldots,Q_{r'}$ on $\CM^Y_A$.

We now fix a choice of a Lagrangian torus fiber $L\subset X$. Since $\pi$ is $T^n$-equivariant, the pre-image of $L\subset X$ is a Lagrangian torus fiber in $Y$, which, by abuse of notations, will again be denoted by $L$. Recall that the relative homotopy group $\pi_2(Y,L)\cong H_2(Y,L;\Z)$ is generated by the basic disc classes $\beta_1,\ldots,\beta_{m'}$, each of which is of Maslov index two and represented by a holomorphic disc $w:(D^2,\partial D^2)\to (Y,L)$. For each $j$, the basic disc class $\beta_j$ intersects with multiplicity one the toric divisor $H_j\subset Y$ which corresponds to the primitive generator $\bb_j$ of a 1-dimensional cone of the fan $\Sigma_Y$. We can identify $\Z^{m'}$ and $N$ with $\pi_2(Y,L)$ and $\pi_1(L)$ respectively so that the exact sequence \eqref{kexact5} becomes
\begin{equation*}
 0 \to H_2(Y;\Z) \to \pi_2(Y,L) \to \pi_1(L) \to 0.
\end{equation*}

Now we recall the definition of open GW invariants of $Y$. Let $\beta\in\pi_2(Y,L)$ be a relative homotopy class with Maslov index $\mu(\beta)=2$. Since $Y$ is semi-Fano, any such $\beta$ is of the form $\beta_j+\alpha$ where $\beta_j$ is a basic disc class and $\alpha\in H_2^\mathrm{eff}(Y;\Z)$ is an effective class represented by holomorphic spheres such that $c_1(\alpha)=0$. Let $\CM^{main}_1(L,\beta)$ the moduli space of stable maps from genus zero bordered Riemann surfaces with one boundary marked point representing the class $\beta$. Then the results of Fukaya-Oh-Ohta-Ono \cite{FOOO1} tell us that $\CM^{main}_1(L,\beta)$ admits a Kuranishi structure of real virtual dimension $n$ and has a virtual fundamental cycle $[\CM_1(L,\beta)]^{vir}$. Define the open GW invariant
$$n_\beta^Y=n_{1,0,\beta}^Y([\mathrm{pt}]_{L})=ev_{0*}([\CM_1(L,\beta)]^{vir})\in H_n(L;\rat)\cong\rat,$$
where $ev_0:\CM^{main}_1(L,\beta)\to L$ is evaluation on the boundary marked point and $[\mathrm{pt}]_{L}\in H^0(L;\rat)$ is the point class of the Lagrangian torus fiber.
\begin{defn}
The Lagrangian Floer superpotential of $Y$ is the function $W^{LF}_Y:M_{\C^*} \to \C$ defined by
\begin{align*}
W^{LF}_Y  = \sum_{\stackrel{\beta\in\pi_2(Y,L)}{\mu(\beta)=2}}n_\beta^Y Z_\beta
= \sum_{j=1}^{m'}\left(\sum_{\stackrel{\alpha\in H_2^\mathrm{eff}(Y)}{c_1(\alpha)=0}}n^Y_{\beta_j+\alpha}Q^\alpha\right) Z_j,
\end{align*}
where $Z_\beta$ is the monomial given by
\begin{eqnarray*}
Z_\beta(u,\theta)=\exp\left(-\int_\beta\omega+2\pi\consti\langle \partial\beta,\theta\rangle\right),
\end{eqnarray*}
and $Z_j$ are monomials $C_jz^{\bb_j}$ such that the coefficients $C_j$ are subject to constraints
$$Q_a=\prod_{j=1}^{m'} C_j^{d_{aj}},\ a=1,\ldots,r'.$$
So this defines a family of functions $W^{LF}_Y(Q)$ parametrized by $Q=(Q_1,\ldots,Q_{r'})\in\CM^Y_A$.
\end{defn}
Again we assume that the infinite sum in the above definition converges and $W^{LF}_Y$ defines an analytic function on $M_{\C^*}$.

On the other hand, let $\CM^Y_B:=\mathbb{L}^\vee\otimes\C^*$ be the B-model moduli space for $Y$. The same basis $d_1,\ldots,d_{r'}$ of $\mathbb{L}$ defines another set of $\C^*$-valued coordinates $U_1,\ldots,U_{r'}$ on $\CM^Y_B$.
\begin{defn}\label{W^{HV}_Y}
The Hori-Vafa superpotential of $Y$ is the function $W^{HV}_Y:M_{\C^*} \to \C$ defined by
\begin{eqnarray*}
W^{HV}_Y=\sum_{j=1}^{m'} C_j z^{\bb_j},
\end{eqnarray*}
where the coefficients $C_j$ are subject to the following constraints
$$U_a=\prod_{j=1}^{m'} C_j^{d_{ja}},\ a=1,\ldots,r'.$$
This defines a family of functions $\{W^{HV}_Y(U)\}$ parametrized by $U=(U_1,\ldots,U_{r'})\in \CM^Y_B$.
\end{defn}

In \cite{CLLT11}, the following open mirror theorem for semi-Fano toric manifolds was proposed:
\begin{theorem}\label{open_mirror_thm_Y}
Let $Y$ be a semi-Fano toric manifold. Then, up to a change of coordinates on $M_{\C^*}$, we have
\begin{equation*}
W^{LF}_Y(Q)=W^{HV}_Y(U(Q)),
\end{equation*}
where $U=U(Q)$ is the inverse mirror map.
\end{theorem}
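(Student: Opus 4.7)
The plan is to combine the closed mirror theorem for the toric manifold $Y$ (a special case of Theorem \ref{closed_mirror_thm}) with an open/closed equality that expresses the open Gromov--Witten invariants $n^Y_{\beta}$ in terms of closed Gromov--Witten invariants of $Y$ (or of a suitable partial compactification). First, I would regroup $W^{LF}_Y(Q)$ according to basic disc classes. Since $Y$ is semi-Fano, every Maslov index two class in $\pi_2(Y,L)$ has the form $\beta_j+\alpha$ with $j\in\{1,\ldots,m'\}$ and $\alpha\in H_2^{\mathrm{eff}}(Y;\Z)$ satisfying $c_1(\alpha)=0$. Setting
$$\delta_j(Q):=\sum_{\alpha:\,c_1(\alpha)=0} n^Y_{\beta_j+\alpha}\,Q^\alpha,$$
the superpotential becomes $W^{LF}_Y(Q)=\sum_{j=1}^{m'}\delta_j(Q)\,C_j z^{\bb_j}$, where the $C_j$'s are constrained by $Q_a=\prod_j C_j^{d_{aj}}$. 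Since the basic disc class $\beta_j$ alone contributes $n^Y_{\beta_j}=1$, each $\delta_j$ is a power series in $Q$ with constant term one. The combinatorial $W^{HV}_Y(U)$ has the same shape but with its $C_j$'s constrained by the $U$-coordinates. Thus the theorem reduces to the statement that one can choose the inverse mirror map so that
$$U_a(Q)=Q_a\,\prod_{j=1}^{m'}\delta_j(Q)^{d_{aj}}$$
up to a linear reparametrization on $M_{\C^*}$.

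Next, I would invoke an open/closed equality of the type proved for compact Fano toric manifolds in \cite{Chan10, LLW10}, and extended to the Gorenstein toric Fano orbifold setting in Theorem \ref{THM_open_closed} of this paper. Such an equality identifies the open invariant $n^Y_{\beta_j+\alpha}$ with a genus zero closed Gromov--Witten invariant of $Y$ (or of a partial compactification such as $\proj(K_Y\oplus\mathcal{O})$) whose insertion is determined by the toric divisor $H_j$ dual to $\bb_j$. Consequently each generating function $\delta_j(Q)$ can be read off as a component of the small $J$-function $J_Y(Q,z)$ in a suitable asymptotic expansion.

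Finally, I would apply the closed mirror theorem (Theorem \ref{closed_mirror_thm}, in its smooth manifold version), which asserts that $J_Y(Q,z)=I_Y(U(Q),z)$ under the mirror map. A direct inspection of the explicit product form of $I_Y$ shows that the coefficients extracted in the previous step assemble precisely into the monomial identities relating the $B$-model coordinates $U_a$ to the variables $C_j$ appearing in $W^{HV}_Y$. Comparing these with the constraints $Q_a=\prod_j C_j^{d_{aj}}$ defining the $C_j$'s inside $W^{LF}_Y$, and absorbing sign or root ambiguities by a linear coordinate change on $M_{\C^*}$, yields the required identity $W^{LF}_Y(Q)=W^{HV}_Y(U(Q))$.

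The main obstacle is the second step: establishing the open/closed equality unconditionally (in particular without any convergence hypothesis on disc counts) requires a careful analysis of moduli of stable discs with $c_1=0$ sphere bubbles, an equivariant degeneration argument of \emph{Lefschetz pencil} or \emph{deformation to the normal cone} type relating Maslov two discs in $Y$ to rational curves in the compactification, and a precise matching of the perturbed Kuranishi structures (virtual fundamental cycles) on both sides. This analytic core is what the unconditional proof in \cite{CLLT12} supplies. Once it is in hand, the remaining argument is combinatorial bookkeeping with the $I$-function together with the identification of the collection $(\delta_j(Q))_{j=1}^{m'}$ as the Jacobian factors of the inverse mirror map.
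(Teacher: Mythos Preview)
The paper does not give its own proof of this theorem; it simply records the statement and cites \cite{CLLT11} (a proof assuming convergence of the disc-counting series) and \cite{CLLT12} (an unconditional proof). Your outline---reduce to computing the generating series $\delta_j(Q)$ of $n^Y_{\beta_j+\alpha}$, convert these to closed invariants via an open/closed comparison, and then read them off from the $I$-function via the closed mirror theorem---is precisely the strategy of \cite{CLLT11}, and the paper's footnote confirms that this route works once convergence is assumed.

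Where your account goes astray is in attributing the unconditional step to the same mechanism. You write that the analytic core ``is what the unconditional proof in \cite{CLLT12} supplies,'' meaning a careful Kuranishi-structure matching between disc and sphere moduli (and you suggest Lefschetz pencils or deformation to the normal cone). That is not what \cite{CLLT12} does. As the paper explicitly says, the proof there ``uses \emph{Seidel spaces}, is much more geometric in nature and is completely different from the analytic proof in \cite{CLLT11}.'' In other words, the unconditional argument does not proceed by strengthening the open/closed equality of \cite{Chan10,LLW10} or Theorem~\ref{Thm_open_closed2}; it bypasses that route entirely, computing the disc invariants via Seidel representations and thereby obtaining convergence as a by-product rather than an input.

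A related technical point: the open/closed equalities available in this paper (Theorems~\ref{THM_open_closed} and \ref{Thm_open_closed2}) do not apply to an arbitrary semi-Fano toric manifold without further hypotheses. Theorem~\ref{Thm_open_closed2} requires that every $c_1=0$ rational curve meeting the relevant toric divisor $S$ avoid the new divisor $D_\infty$ in the modification $\bar{\mathcal X}$. For a general semi-Fano $Y$ this is a nontrivial condition that your sketch does not address, so the second step of your plan has a genuine gap even at the conditional level.
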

The mirror map $\log Q=\log Q(U)$ for $Y$ is an $H^2(Y)$-valued function given by the $1/z$-coefficient of the $I$-function for $Y$. It defines a local isomorphism near $U=0$, and $U=U(Q)$ is its inverse. Under the assumption that $W^{LF}_Y$ converges, Theorem \ref{open_mirror_thm_Y} was proved in \cite{CLLT11} for all semi-Fano toric manifolds. In a very recent work \cite{CLLT12}, Theorem \ref{open_mirror_thm_Y} was proved for all semi-Fano toric manifolds {\em without} any convergence assumption. Indeed, the convergence of the coefficients of $W^{LF}_Y$ was deduced as a consequence of the main results in \cite{CLLT12}. The proof in \cite{CLLT12}, which uses {\em Seidel spaces}, is much more geometric in nature and is completely different from the analytic proof in \cite{CLLT11}.

We can now formulate the open crepant resolution conjecture (CRC) as follows:

\begin{conjecture}[Open CRC]\label{open_CRC}
Let $\mathcal{X}$ and $Y$ be as above (with the semi-Fano condition).  Let $l$ be the dimension of the K\"ahler moduli of $\mathcal{X}$ (which equals to that of $Y$).  Then there exists
\begin{enumerate}
\item $\epsilon>0$;
\item a coordinate change $Q(q)$, which is a holomorphic map $(\Delta(\epsilon)-\real_{\leq 0})^{l} \to (\cpx^\times)^l$, and $\Delta(\epsilon)$ is an open disc of radius $\epsilon$ in the complex plane;
\item a choice of analytic continuation of coefficients of the Laurent polynomial $W^{LF}_Y(Q)$ to the target of the holomorphic map $Q(q)$,
\end{enumerate}
such that $W^{LF}_Y(Q(q))$ defines a holomorphic family of Laurent polynomials over a neighborhood of $q = 0$, and
\begin{equation*}
W^{LF}_\mathcal{X}(q)=W^{LF}_Y(Q(q)).
\end{equation*}
\end{conjecture}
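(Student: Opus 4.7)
The plan is to follow the commutative diagram outlined in the introduction, which reduces the open CRC to a matching of the combinatorial Hori-Vafa superpotentials on the two sides under analytic continuation across the $B$-model moduli space. The overall strategy is: first, pass from the Lagrangian Floer picture to the Hori-Vafa picture separately on $\mathcal{X}$ and $Y$ via open mirror theorems; second, identify the two Hori-Vafa descriptions as restrictions of a single global family of functions on the $B$-model moduli space $\CM_B$ associated with the secondary fan of $Y$; third, analytically continue the $Y$ mirror map from the cusp corresponding to $Y$ to the cusp corresponding to $\mathcal{X}$, thereby producing the change of variables $Q(q)$.

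For the $Y$ side, since $Y$ is a semi-Fano toric manifold (crepancy promotes the semi-Fano property of $\mathcal{X}$ to $Y$), Theorem \ref{open_mirror_thm_Y} of \cite{CLLT12} immediately gives
\begin{equation*}
W^{LF}_Y(Q) = W^{HV}_Y(U(Q))
\end{equation*}
together with convergence of $W^{LF}_Y$ on a neighborhood of the cusp $U_Y$. For the $\mathcal{X}$ side I need the orbifold open mirror theorem of Conjecture \ref{open_mirror_thm}. In the weighted projective space case $\mathcal{X} = \proj(1,\ldots,1,n)$, the approach is first to prove an open--closed equality (Theorem \ref{THM_open_closed}) expressing the coefficients of $W^{LF}_\mathcal{X}$ as certain closed orbifold Gromov--Witten invariants, and then to invoke the toric orbifold mirror theorem \ref{closed_mirror_thm} of Coates--Corti--Iritani--Tseng to translate those invariants into the Hori-Vafa data $W^{HV}_\mathcal{X}(y(q))$; the same open--closed equality will yield convergence of $W^{LF}_\mathcal{X}$ near the cusp $U_\mathcal{X}$.

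The two Hori-Vafa superpotentials $W^{HV}_\mathcal{X}$ and $W^{HV}_Y$ have the same monomial form, indexed by the rays $\bb_1,\ldots,\bb_{m'}$ of $\Sigma_Y$, which coincide with the union of stacky and twisted-sector generators used to build $W^{HV}_\mathcal{X}$. They should therefore be realized as restrictions to two different coordinate patches of a single global family on $\CM_B$, which in the toric setting is the toric stack cut out by the secondary fan of $Y$; the cusps $U_\mathcal{X}$ and $U_Y$ sit at two distinguished boundary strata. On the overlap the coordinates $y$ and $U$ are related by an explicit monomial change coming from the change of basis of $\mathbb{L}$ between the two cusps, together with a choice of branches for the fractional powers that appear due to the orbifold structure at the $\mathcal{X}$-cusp. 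Reading off this relation produces a holomorphic map $U = U(y)$ defined on the appropriate slit polydisc, and the combinatorial identity $W^{HV}_\mathcal{X}(y) = W^{HV}_Y(U(y))$ holds by construction.

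Composing, the desired change of variables is $Q(q) := Q(U(y(q)))$, where $q \mapsto y(q)$ and $U \mapsto Q(U)$ are the two inverse mirror maps. The principal obstacle is the analytic continuation step: one must check that the inverse mirror map for $Y$, initially defined on a small neighborhood of $U_Y$, extends along paths in $\CM_B$ avoiding the discriminant locus to a punctured neighborhood of $U_\mathcal{X}$, and that the resulting coefficient-wise analytic continuations of $W^{LF}_Y(Q)$ assemble into a Laurent series on $M_{\cpx^\times}$ that equals $W^{LF}_\mathcal{X}(q)$ monomial by monomial. The domain $(\Delta(\epsilon) - \real_{\leq 0})^l$ in the statement arises precisely from cutting the branch loci encountered in this continuation. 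Once both open mirror theorems and convergence are in place, the matching reduces to the combinatorial identity on the Hori-Vafa side, so the analytic continuation of the $Y$ mirror map, rather than the identification of superpotentials, is the hard part; establishing Conjecture \ref{open_mirror_thm} beyond weighted projective spaces is the other major difficulty and is what restricts the unconditional proof in this paper to $\mathcal{X} = \proj(1,\ldots,1,n)$.
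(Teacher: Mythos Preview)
Your proposal is correct and follows essentially the same route as the paper: the reduction via the square diagram to matching Hori--Vafa superpotentials over the secondary-fan $B$-model moduli, with the change of variables $Q(q)$ produced by analytically continuing the $Y$ mirror map to the $\mathcal{X}$-cusp, is exactly the content of Theorem~\ref{closedCRC_implies_openCRC} and its proof, and your treatment of the $\proj(1,\ldots,1,n)$ case via the open--closed equality plus the closed toric orbifold mirror theorem is precisely what is carried out in Section~\ref{examples}. The only cosmetic difference is that the paper packages the analytic continuation step through the symplectic transformation $\mathbb{U}$ of the closed CRC (expanding $\mathbb{U}^{-1}(I_Y)$ to read off $\Lambda(y)$), whereas you phrase it as a direct Mellin--Barnes continuation of the mirror map; the paper itself notes in the remark following Theorem~\ref{closedCRC_implies_openCRC} that these two descriptions are equivalent, and in Section~6.2 it in fact performs the direct continuation you describe.
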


As $\mathcal{X}$ is Gorenstein, $\iota(\nu)$ is a positive integer for any $\nu\in\mathrm{Box}'$; in particular, we have $\iota(\nu)\geq1$. Recall that in the definition of the extended stacky fan (and hence $W^{LF}_\mathcal{X}$), we restricted to those $\nu$ with $\iota(\nu)\leq1$. Hence $\mathcal{X}$ is Gorenstein implies that $\iota(\nu_j)=1$ for $m<j\leq m'$. In particular, $W^{LF}_\mathcal{X}$ is summing over all $\beta\in\pi_2(\mathcal{X},L)$ with Chern-Weil Maslov index $\mu_{CW}(\beta)=2$. Moreover, if we write $\beta=\sum_{j=1}^m k_j\beta_j+\sum_{j=m+1}^{m'}k_j\beta_{\nu_j}+d$ with $k_j\in \Z_{\geq0}$ and $d\in H_2^\mathrm{eff}(\mathcal{X})$, then $\mu_{CW}(\beta)=2\sum_{j=1}^{m'} k_j+2c_1^{CW}(d)$. Since $\mathcal{X}$ is semi-Fano, $c_1^{CW}(d)\geq0$. Hence the condition $\mu_{CW}(\beta)=2$ implies that $\beta$ must be of one of the following forms:
\begin{enumerate}
\item[(1)] $\beta=\beta_j+d$ for $j=1,\ldots,m$ and $d\in H_2^\mathrm{eff}(\mathcal{X})$ with $c_1^{CW}(d)=0$, or
\item[(2)] $\beta=\beta_{\nu_j}+d$ for $j=m+1,\ldots,m'$ and $d\in H_2^\mathrm{eff}(\mathcal{X})$ with $c_1^{CW}(d)=0$.
\end{enumerate}
In view of this, the Lagrangian Floer superpotential of $\mathcal{X}$ can be expressed as
\begin{align*}
W^{LF}_\mathcal{X}= & \sum_{j=1}^m \left(\sum_{\stackrel{d\in H_2^\mathrm{eff}(\mathcal{X})}{c_1^{CW}(d)=0}}\sum_{l\geq0}\frac{1}{l!}
n_{1,l,\beta_j+d}^\mathcal{X}([\mathrm{pt}]_{L};\tau_\mathrm{tw},\ldots,\tau_\mathrm{tw})q^d\right) Z_j\\
& \quad + \sum_{j=m+1}^{m'} \left(\sum_{\stackrel{d\in H_2^\mathrm{eff}(\mathcal{X})}{c_1^{CW}(d)=0}}\sum_{l\geq0}\frac{1}{l!}
n_{1,l,\beta_{\nu_j}+d}^\mathcal{X}([\mathrm{pt}]_{L};\tau_\mathrm{tw},\ldots,\tau_\mathrm{tw})q^d \right) Z_j.
\end{align*}

As a result, the open CRC is equivalent to asserting the following equalities\footnote{We emphasize that these equalities are equalities between analytic functions (as oppose to formal power series).} between generating functions of open (orbifold) GW invariants for $\mathcal{X}$ and $Y$:
\begin{align}\label{openmatch1}
\sum_{\stackrel{d\in H_2^\mathrm{eff}(\mathcal{X})}{c_1^{CW}(d)=0}}\sum_{l\geq0}\frac{1}{l!}
n_{1,l,\beta_j+d}^\mathcal{X}([\mathrm{pt}]_{L};\tau_\mathrm{tw},\ldots,\tau_\mathrm{tw})q^d  = \sum_{\stackrel{\alpha\in H_2^\mathrm{eff}(Y)}{c_1(\alpha)=0}}n^Y_{\beta_j+\alpha}Q^\alpha,
\end{align}
for $j=1,\ldots,m$, and
\begin{align}\label{openmatch2}
\sum_{\stackrel{d\in H_2^\mathrm{eff}(\mathcal{X})}{c_1^{CW}(d)=0}}\sum_{l\geq0}\frac{1}{l!}
n_{1,l,\beta_{\nu_j}+d}^\mathcal{X}([\mathrm{pt}]_{L};\tau_\mathrm{tw},\ldots,\tau_\mathrm{tw})q^d = \sum_{\stackrel{\alpha\in H_2^\mathrm{eff}(Y)}{c_1(\alpha)=0}}n^Y_{\beta_j+\alpha}Q^\alpha.
\end{align}
for $j=m+1,\ldots,m'$, after analytic continuation of the generating functions for $Y$ and a change of variables $Q=Q(q)$.

\subsection{Relation to the closed CRC}\label{closedCRC_vs_openCRC}

The open CRC (Conjecture \ref{open_CRC}) is closely related to the closed crepant resolution conjecture. This can be best seen using the formulation due to Coates-Iritani-Tseng \cite{CIT09} (see also Coates-Ruan \cite{Coates-Ruan}). Let us first briefly recall their formulation.

In \cite{Givental04}, Givental proposed a symplectic formalism to understand Gromov-Witten theory. Let $\mathcal{Z}$ be either $\mathcal{X}$ or $Y$. Then let
$$\mathcal{H}_\mathcal{Z}:=H^*_\mathrm{orb}(\mathcal{Z};\Lambda)\otimes\C((z^{-1})),$$
where $\Lambda$ is a certain Novikov ring. This is an infinite dimensional symplectic vector space under the pairing
$$\Omega_\mathcal{Z}(f,g)=\mathrm{Res}_{z=0}(f(-z),g(z))_\mathcal{Z} dz,$$
where $(\cdot,\cdot)$ denotes the orbifold Poincar\'e pairing. Givental's Lagrangian cone for $\mathcal{Z}$ is a Lagrangian submanifold-germ $\mathcal{L}_\mathcal{Z}$ in the symplectic vector space $\mathcal{H}_\mathcal{Z}$ defined as the graph of the differential of the {\em genus 0 descendent GW potential} $\mathcal{F}^0_\mathcal{Z}$. It encodes all the genus zero (orbifold) GW invariants of $\mathcal{Z}$ and many relations in GW theory can be rephrased as geometric constraints on $\mathcal{L}_\mathcal{Z}$ \cite{Coates-Givental07,Givental04}.

The closed CRC in \cite{CIT09} was formulated as
\begin{conjecture}[Closed CRC; Conjecture 1.3 in \cite{CIT09}]\label{closed_CRC}
There exists a linear symplectic transformation $\mathbb{U}:\mathcal{H}_\mathcal{X} \to \mathcal{H}_Y$, satisfying certain conditions, such that after analytic continuations of $\mathcal{L}_\mathcal{X}$ and $\mathcal{L}_Y$, we have
$$\mathbb{U}(\mathcal{L}_\mathcal{X})=\mathcal{L}_Y.$$
\end{conjecture}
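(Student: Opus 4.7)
The plan is to attack the Closed CRC in the toric, semi-Fano setting already studied in this paper, combining the strategy of Coates-Iritani-Tseng with the open-string input developed here. First I would construct a global B-model moduli stack $\CM_B$ via the secondary fan of $Y$; this stack contains both large-radius cusps corresponding to $\mathcal{X}$ and $Y$, and the respective mirror maps identify neighborhoods of these cusps with the $A$-model neighborhoods $U_\mathcal{X}$ and $U_Y$. The $I$-functions $I_\mathcal{X}(y,z)$ and $I_Y(U,z)$ are explicit hypergeometric solutions of the GKZ system on $\CM_B$ defined near their respective cusps; Theorem \ref{closed_mirror_thm} identifies them, after applying the mirror map, with the $J$-functions and hence with natural slices of the Lagrangian cones $\mathcal{L}_\mathcal{X}$ and $\mathcal{L}_Y$.

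Next I would define the candidate symplectic transformation $\mathbb{U}:\mathcal{H}_\mathcal{X}\to\mathcal{H}_Y$ using Iritani's integral ($\widehat{\Gamma}$-) structure on the quantum $D$-module: the cones $\mathcal{L}_\mathcal{X}$ and $\mathcal{L}_Y$ are spanned by twisted flat sections indexed by $K$-theoretic lattices $K(\mathcal{X})$ and $K(Y)$, and the Fourier-Mukai transform associated to the crepant resolution $Y\to\mathcal{X}$ induces an isomorphism $K(\mathcal{X})_\rat\to K(Y)_\rat$. Declare $\mathbb{U}$ to be the unique $\C$-linear map identifying these integral structures after analytic continuation of flat sections along a chosen path in $\CM_B$ connecting the two cusps. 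Compatibility of the $\widehat{\Gamma}$-structure with the orbifold Poincar\'e pairing will then deliver symplecticity of $\mathbb{U}$ and the grading and degree constraints stipulated in Conjecture 1.3 of \cite{CIT09} almost for free.

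The hard part is establishing $\mathbb{U}(\mathcal{L}_\mathcal{X})=\mathcal{L}_Y$ after analytic continuation. Via Theorem \ref{closed_mirror_thm}, this reduces to a hypergeometric identity
\begin{equation*}
\mathbb{U} \cdot I_\mathcal{X}(y,z) = I_Y(U(y),z)
\end{equation*}
after transport through $\CM_B$, which in turn demands a precise description of the monodromy of the GKZ system along the chosen path and of the mixing between untwisted and twisted sectors it produces. In the Hard Lefschetz case this matching is comparatively clean, but without Hard Lefschetz the analytic continuation picks up nontrivial coefficients involving roots of unity (as already visible in the $\proj(1,1,2)$ example). It is exactly here that the open-string machinery of this paper becomes useful: assuming Conjecture \ref{conj:iso}, the open CRC (Conjecture \ref{open_CRC}) combined with the isomorphism $QH^*_\mathrm{orb}(\mathcal{X})\cong\mathrm{Jac}(W^{LF}_\mathcal{X})$ yields a Fourier-Mukai-compatible identification of the small quantum products, while the open matchings \eqref{openmatch1}--\eqref{openmatch2} supply explicit generating-function data against which the analytic continuation can be checked coefficient by coefficient. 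Bootstrapping from this small quantum isomorphism to the full Lagrangian cone statement via the descendant recursions in the spirit of \cite{Coates-Givental07} would then complete the argument; carrying out this last step rigorously, especially controlling the contribution of twisted-sector descendants under the Fourier-Mukai-induced $\mathbb{U}$, is the principal obstacle.
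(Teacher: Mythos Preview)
The paper does not prove this statement. Conjecture~\ref{closed_CRC} is quoted from \cite{CIT09} as an input, not established here; the paper remarks that it has since been proven in the toric case by Coates--Iritani--Jiang \cite{CIJ}, and then \emph{assumes} it (in the form $\mathbb{U}(I_\mathcal{X})=I_Y$) as a hypothesis of Theorem~\ref{closedCRC_implies_openCRC} in order to deduce the open CRC. So there is no ``paper's own proof'' to compare against.

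That said, your sketch has a structural problem worth flagging. The first two paragraphs --- build $\CM_B$ from the secondary fan, use GKZ hypergeometrics, define $\mathbb{U}$ via Iritani's $\widehat{\Gamma}$-integral structure and a Fourier--Mukai transform, and reduce to an analytic continuation identity $\mathbb{U}\cdot I_\mathcal{X}=I_Y$ --- are indeed the outline of the Coates--Iritani--Jiang approach, and this part is reasonable as a plan. But your third paragraph proposes to close the argument by invoking Conjecture~\ref{conj:iso} and the open CRC (Conjecture~\ref{open_CRC}) to get the small-quantum matching and then ``bootstrap'' to the full cone. This is circular in the logic of the present paper: Theorem~\ref{closedCRC_implies_openCRC} derives the open CRC \emph{from} the closed CRC, not the other way around, and the only case where the open CRC is established independently here is $\proj(1,\ldots,1,n)$. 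Even granting an independent open CRC, the equalities \eqref{openmatch1}--\eqref{openmatch2} concern only the $H^{\leq 2}_{\mathrm{orb}}$-restricted superpotential, i.e.\ small quantum data; they do not determine the full descendent potential, so the ``bootstrapping'' step via \cite{Coates-Givental07} would require exactly the Mellin--Barnes/GKZ analytic continuation you were trying to avoid. In short, the open-string input of this paper is downstream of Conjecture~\ref{closed_CRC}, not a substitute for proving it.
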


We have become aware recently that Conjecture \ref{closed_CRC} has now been proven in full generality in the toric case by Coates, Iritani and Jiang \cite{CIJ}, though we have not seen the details of their proof yet. On the other hand, the result \cite[Theorem  1.12]{G-W-wc} of Gonzalez and Woodward implies a relation between Gromov-Witten invariants of $\mathcal{X}$ and $Y$. This should be considered as proving a version of closed CRC. It is plausible that \cite[Theorem  1.12]{G-W-wc} can be used to deduce Conjecture 31, but at this point we do not know how to do this.

In practice, to prove this conjecture, one computes the symplectic transformation $\mathbb{U}$ by first analytically continuing the $I$-function $I_Y$ of $Y$ from a neighborhood of the large complex structure limit point for $Y$ (i.e. near $U=0$ in $\CM^Y_B$) to a neighborhood of the large complex structure limit point for $\mathcal{X}$ (i.e. near $y=0$ in $\CM^\mathcal{X}_B$), and then comparing it with the $I$-function $I_\mathcal{X}$ for $\mathcal{X}$. Since the coefficients of $I_Y$ are hypergeometric functions, one can use Mellin-Barnes integrals to perform this analytic continuation (as done by Borisov-Horja \cite{BH06}). Notice that the choice of branch cuts in the analytic continuation process always lead to an ambiguity in the construction of $\mathbb{U}$ (see \cite[Remark 3.10]{CIT09}). This is also what happens in the construction of our change of variables $Q=Q(q)$.

The relation between the open and closed CRC originates from the following construction of the change of variables $Q=Q(q)$ from the symplectic transformation $\mathbb{U}$: We first expand $\mathbb{U}^{-1}(I_Y)$ near the large complex structure limit point for $\mathcal{X}$. In terms of the coordinates $y\in \CM^\mathcal{X}_B$, we have
$$\mathbb{U}^{-1}(I_Y)=1+\frac{\Lambda(y)}{z}+O(z^{-2}).$$
The map $\Lambda(y)$ takes values in a neighborhood of the large radius limit point (i.e. $Q=0$) in $\CM^Y_A$. Then we define the change of variables $Q=Q(q)$ as the composition of the map $\Upsilon(y):=\exp\Lambda(y)$ induced by $\mathbb{U}$ and the inverse mirror map $y=y(q)$ for $\mathcal{X}$. Then we have
\begin{theorem}\label{closedCRC_implies_openCRC}
Assume that the open mirror theorems for $\mathcal{X}$ and $Y$ (Conjecture \ref{open_mirror_thm} and Theorem \ref{open_mirror_thm_Y} respectively) hold. Also assume that the closed CRC (Conjecture \ref{closed_CRC}) holds, with $\mathbb{U}(I_\mathcal{X})=I_Y$. Then the open crepant resolution conjecture (Conjecture \ref{open_CRC}) is true:
\begin{equation*}
W^{LF}_\mathcal{X}(q)=W^{LF}_Y(Q(q)),
\end{equation*}
via the change of variables $Q=Q(q)$ for the quantum parameters defined above.
\end{theorem}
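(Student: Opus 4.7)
The plan is to combine the two open mirror theorems with the assumed special form of the closed CRC, reducing the open CRC to a purely combinatorial identity of Hori--Vafa superpotentials together with a matching of mirror maps. First, I would invoke the open mirror theorem for $\mathcal{X}$ (Conjecture \ref{open_mirror_thm}) and the open mirror theorem for $Y$ (Theorem \ref{open_mirror_thm_Y}) to rewrite the two Lagrangian Floer superpotentials as
\begin{equation*}
W^{LF}_\mathcal{X}(q) = W^{HV}_\mathcal{X}(y(q)), \qquad W^{LF}_Y(Q) = W^{HV}_Y(U(Q)),
\end{equation*}
so that proving $W^{LF}_\mathcal{X}(q) = W^{LF}_Y(Q(q))$ reduces to verifying the identity $W^{HV}_\mathcal{X}(y(q)) = W^{HV}_Y(U(Q(q)))$ after the prescribed change of variables.

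Next, I would exploit the fact that the two Hori--Vafa potentials are combinatorially identical. Both $W^{HV}_\mathcal{X}$ and $W^{HV}_Y$ are of the form $\sum_{j=1}^{m'} C_j z^{\bb_j}$, because the rays of $\Sigma_Y$ are precisely the stacky vectors $\{\bb_1,\ldots,\bb_{m'}\}$ used in the extended stacky fan of $\mathcal{X}$; moreover the linear constraints on the coefficients $C_j$ are indexed by the same basis $d_1,\ldots,d_{r'}$ of $\mathbb{L}$ with identical exponents $d_{aj}$ on both sides. Hence, under the natural identification $\CM^\mathcal{X}_B \cong \mathbb{L}^\vee \otimes \C^* \cong \CM^Y_B$ sending $y_a \mapsto U_a$, one gets $W^{HV}_\mathcal{X}(y) = W^{HV}_Y(U)\big|_{U=y}$ as analytic functions on $M_{\C^*}$, and this equality persists under any compatible analytic continuation of both sides.

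The crux is to show that the change of variables $Q(q) = \Upsilon(y(q))$ is exactly the one that realizes $U(Q(q)) = y(q)$ under the $U \leftrightarrow y$ identification above. Here the hypothesis $\mathbb{U}(I_\mathcal{X}) = I_Y$ plays the decisive role: after analytic continuation to a common neighborhood of $y=0$, it identifies the $1/z$-coefficients of the two $I$-functions. Since those $1/z$-coefficients are precisely the mirror maps $\tau(y) = \log q(y)$ for $\mathcal{X}$ and $\log Q(U)$ for $Y$, one concludes $\Lambda(y) = \tau(y)$, so that $\Upsilon(y) = \exp \Lambda(y) = q(y)$; composition with the inverse mirror map $y(q)$ gives a change of variables for which $U(Q(q)) = y(q)$ under the identification of A-model moduli. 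Chaining the three identities now yields
\begin{equation*}
W^{LF}_\mathcal{X}(q) = W^{HV}_\mathcal{X}(y(q)) = W^{HV}_Y(U(Q(q))) = W^{LF}_Y(Q(q)),
\end{equation*}
as desired.

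The main obstacle will be handling the various analytic continuations consistently. One must verify that the continuation of $I_Y$ from a neighborhood of $U=0$ to a neighborhood of $y=0$ induced by $\mathbb{U}$, the continuation of the coefficients of $W^{LF}_Y(Q)$ to the target of $Q(q)$, and the choices of branches for fractional powers and logarithms used in the definitions of $W^{HV}_\mathcal{X}$, $W^{HV}_Y$, and of the two mirror maps, can all be made compatibly along the same paths in the global B-model moduli space. This compatibility can be secured by invoking the uniqueness of analytic continuation together with the fact that, once convergence is granted, the open mirror theorems are identities between honest holomorphic functions, so the local identity near $q=0$ extends uniquely to the full domain of $Q(q)$.
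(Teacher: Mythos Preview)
Your proposal is correct and follows essentially the same route as the paper's proof: reduce both Lagrangian Floer superpotentials to Hori--Vafa superpotentials via the respective open mirror theorems, observe that $W^{HV}_\mathcal{X}$ and $W^{HV}_Y$ are combinatorially identical under the identification $\CM^\mathcal{X}_B \cong \mathbb{L}^\vee\otimes\C^* \cong \CM^Y_B$, and then use the hypothesis $\mathbb{U}(I_\mathcal{X})=I_Y$ to conclude that the composition $U\circ\Upsilon$ realizes this identification, so that the chain $W^{LF}_\mathcal{X}(q)=W^{HV}_\mathcal{X}(y(q))=W^{HV}_Y((U\circ\Upsilon)(y(q)))=W^{LF}_Y(Q(q))$ closes. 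The paper's proof is terser but structurally identical; you are simply more explicit about why the two Hori--Vafa potentials agree (same set of monomials $z^{\bb_j}$, same constraint matrix $(d_{aj})$) and about how $\mathbb{U}(I_\mathcal{X})=I_Y$ forces the $1/z$-coefficients, hence the mirror maps, to match after analytic continuation.
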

\begin{proof}
The composition $U\circ\Upsilon$ of the mirror map $Q\mapsto U(Q)$ with the map $\Upsilon=\Upsilon(y)$ defined above gives a gluing of the B-model moduli spaces $\CM^Y_B$ with $\CM^\mathcal{X}_B$. This extends the family of LG superpotentials $W^{HV}_Y$ over a larger base which includes the neighborhood of the large complex structure limit point for $\mathcal{X}$ over which $W^{HV}_\mathcal{X}$ is defined. Moreover, by constructions,
$$W^{HV}_Y((U\circ\Upsilon)(y))=W^{HV}_\mathcal{X}(y)$$
since we have $\mathbb{U}(I_\mathcal{X})=I_Y$.

On the other hand, the open mirror theorems for $Y$ and $\mathcal{X}$ state that
\begin{align*}
W^{LF}_Y(Q) & = W^{HV}_Y(U(Q)),\textrm{ and}\\
W^{LF}_\mathcal{X}(q) & =W^{HV}_\mathcal{X}(y(q)).
\end{align*}
respectively. It follows that
$$W^{LF}_Y(Q(q))=W^{HV}_Y((U\circ\Upsilon)(y(q)))=W^{HV}_\mathcal{X}(y(q))=W^{LF}_\mathcal{X}(q),$$
which yields the open CRC.
\end{proof}
\begin{remark}$\mbox{}$
\begin{enumerate}
\item[(1)] One can also calculate the change of variables $Q=Q(q)$ by a direct analytic continuation of the mirror map for $Y$ using Mellin-Barnes integrals \cite{BH06}, which, by the open mirror theorem, corresponds to analytic continuation of the Lagrangian Floer superpotential $W^{LF}_Y$.
\item[(2)] As have been observed in \cite{CIT09}, the change of variables $y\mapsto (U\circ\Upsilon)(y)$ from $\CM^\mathcal{X}_B$ to $\CM^Y_B$ which appears in the proof does not necessarily preserve the flat structures near the large complex structure limits in the B-model moduli spaces. This was the case for the example $\mathcal{X}=\proj(1,1,1,3)$, $Y=\proj(K_{\proj^2}\oplus\mathcal{O}_{\proj^2})$ as has been demonstrated in \cite{CIT09}. Indeed this is also the case for $\proj(1,\ldots,1,n)$ whenever $n\geq3$.
\item[(3)] Suppose that the toric K\"ahler orbifold $(\mathcal{X},\omega)$ satisfies the Hard Lefschetz condition, i.e. $$\omega^k\cup_\mathrm{orb}:H^{n-k}_\mathrm{orb}(\mathcal{X})\to H^{n+k}_\mathrm{orb}(\mathcal{X}),$$
    where $\cup_\mathrm{orb}$ denotes the Chen-Ruan orbifold cup product, is an isomorphism for all $k\geq0$. An example is given by the weighted projective plane $\mathcal{X}=\proj(1,1,2)$. Then \cite[Theorem 5.10]{CIT09} implies that the symplectic transformation $\mathbb{U}$ can be written as
    $$\mathbb{U}=U_0+U_1z^{-1}+\cdots+U_Nz^{-N}$$
    for some $N\in\Z_{\geq0}$ and some linear maps $U_i:H^*_\mathrm{orb}(\mathcal{X};\C)\to H^*(Y;\C)$. In this case, the change of variables needed in the open CRC is simply given by $Q=U_0(q)$. See \cite[Section 9]{Coates-Ruan}.
\end{enumerate}
\end{remark}

\subsection{Specialization of quantum parameters and disc counting}\label{sec:specialization}

Ruan's original crepant resolution conjecture \cite{Ruan06} states that the small quantum cohomology ring of the crepant resolution $Y$ is isomorphic to the small quantum cohomology ring of $\mathcal{X}$ after analytic continuation of quantum parameters of $Y$ and specialization of the exceptional ones to certain roots of unity. Using the open CRC, we are able to give a new geometric interpretation of this specialization.

To begin with, we recall that, as a corollary of the results in Fukaya-Oh-Ohta-Ono \cite{FOOO10b}, we have a ring isomorphism
\begin{equation*}
QH^*(Y,Q)\cong Jac(W^{LF}_Y(Q)),
\end{equation*}
where the right hand side is the Jacobian ring
$$Jac(W^{LF}_Y):=\C[z_1^{\pm1},\ldots,z_n^{\pm1}]/\langle \partial_1W^{LF}_Y,\ldots,\partial_n W^{LF}_Y\rangle$$
of the Lagrangian Floer superpotential $W^{LF}_Y$. On the other hand, we expect that there is also a ring isomorphism between the small quantum {\em orbifold} cohomology of $\mathcal{X}$ and the Jacobian ring of the Lagrangian Floer superpotential of $\mathcal{X}$:
\begin{equation*}
QH_\mathrm{orb}^*(\mathcal{X},q)\cong Jac(W^{LF}_\mathcal{X}(q)).
\end{equation*}
These two results together with the open CRC (Conjecture \ref{open_CRC}) then implies that, after analytic continuation and the change of variables $Q=Q(q)$ for the quantum parameters, we have a ring isomorphism
\begin{equation*}
QH^*(Y,Q)\cong QH_\mathrm{orb}^*(\mathcal{X},q)
\end{equation*}
between the small quantum cohomology ring of $Y$ and the small {\em orbifold} quantum cohomology ring of $\mathcal{X}$.

Now the small quantum cohomology ring of the orbifold $\mathcal{X}$ is given by setting all the orbi-parameters $\tau_\mathrm{tw}$ to zero. Correspondingly, the change of variables $Q=Q(q)$ becomes
\begin{align}\label{specialization}
Q_a = \left\{\begin{array}{ll}
             e^{\langle c+f,d_a\rangle} q_a,&\ a=1,\ldots,r,\\
             e^{\langle c+f,d_a\rangle},&\ a=r+1,\ldots,r',
             \end{array}\right.
\end{align}
where $c\in H^2(Y;\C)$ is the class defined by
$$\mathbb{U}(\mathbf{1}_\mathcal{X})=\mathbf{1}_Y-cz^{-1}+O(z^{-2})$$
and $f\in H^2(Y;\C)$ is an exceptional class. Note that this is similar but not always the same as Ruan's CRC because $e^{\langle c+f,d_a\rangle}$ is not necessarily a root of unity. Hence this leads to a ``quantum corrected" version of Ruan's CRC. See \cite{Coates-Ruan} (in particular Section 8) for an excellent explanation of what is happening.

From the point of view of Lagrangian Floer theory and disc counting, setting $\tau_\mathrm{tw}=0$ corresponds to switching off all contributions from orbi-discs in the Lagrangian Floer superpotential $W^{LF}_\mathcal{X}$. In particular, all terms in the infinite sum
$$\sum_{\stackrel{d\in H_2^\mathrm{eff}(\mathcal{X})}{c_1^{CW}(d)=0}}\sum_{l\geq0}\frac{1}{l!}
n_{1,l,\beta_{\nu_j}+d}^\mathcal{X}([\mathrm{pt}]_{L};\tau_\mathrm{tw},\ldots,\tau_\mathrm{tw})q^d$$
will vanish because a holomorphic orbi-disc must have at least one interior orbifold marked point, so that the invariant $n_{1,l,\beta_{\nu_j}+d}^\mathcal{X}([\mathrm{pt}]_{L};\tau_\mathrm{tw},\ldots,\tau_\mathrm{tw})$ is nonzero only when $l>0$. By the open CRC, the corresponding terms in $W^{LF}_Y$, which correspond precisely to those discs meeting the exceptional divisors in $Y$, also vanish. Hence we conclude that:
\begin{theorem}\label{thm_specialization}
Suppose that the open CRC (Conjecture \ref{open_CRC}) holds. If we write the Lagrangian Floer superpotential of the crepant resolution $Y$ as $W^{LF}_Y=W^{LF,\mathrm{excep}}_Y+W^{LF,\mathrm{rest}}_Y$, where
$$W^{LF,\mathrm{excep}}_Y=\sum_{j=m+1}^{m'}\left(\sum_{\stackrel{\alpha\in H_2^\mathrm{eff}(Y)}{c_1(\alpha)=0}}
n^Y_{\beta_j+\alpha}Q^\alpha\right)Z_j$$
is the sum of terms coming from discs meeting the exceptional divisors in $Y$, then each term of $W^{LF,\mathrm{excep}}_Y$ vanishes after the change of variables and specialization \eqref{specialization}.
\end{theorem}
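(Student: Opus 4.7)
The plan is to deduce the theorem directly from the open CRC by combining the generating-function equality \eqref{openmatch2} with a vanishing result for open orbifold GW invariants of $\mathcal{X}$ with no interior orbifold insertions. For each exceptional index $j\in\{m+1,\ldots,m'\}$, equation \eqref{openmatch2} (which is a reformulation of Conjecture \ref{open_CRC}) says that after the substitution $Q=Q(q)$ and the indicated analytic continuation,
\begin{equation*}
\sum_{\alpha}n^Y_{\beta_j+\alpha}\,Q(q)^\alpha \;=\; \sum_{d}\sum_{l\geq 0}\frac{1}{l!}\,n^\mathcal{X}_{1,l,\beta_{\nu_j}+d}([\mathrm{pt}]_L;\tau_\mathrm{tw},\ldots,\tau_\mathrm{tw})\,q^d,
\end{equation*}
and the left-hand side is precisely the coefficient of $Z_j$ appearing in $W^{LF,\mathrm{excep}}_Y$. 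Imposing $\tau_\mathrm{tw}=0$ (as dictated by \eqref{specialization}) annihilates every term with $l\geq 1$, since each such term is a monomial of strictly positive total degree in the $\tau_a$ with $a=r+1,\ldots,r'$. Thus only $\sum_d n^\mathcal{X}_{1,0,\beta_{\nu_j}+d}([\mathrm{pt}]_L)\,q^d$ survives, and the theorem reduces to showing this sum is identically zero for every $j>m$.

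The crux is the vanishing $n^\mathcal{X}_{1,0,\beta_{\nu_j}+d}([\mathrm{pt}]_L)=0$ for all $d\in H_2^{\mathrm{eff}}(\mathcal{X})$ with $c_1^{CW}(d)=0$. The observation is that $\partial(\beta_{\nu_j}+d)=\nu_j\in\mathrm{Box}'$, so by definition this boundary class does \emph{not} lie in the sublattice $N_\bb\subset N$ generated by the stacky vectors $\bb_1,\ldots,\bb_m$. I will argue that the moduli space $\CM^{main}_{1,0}(L,\beta_{\nu_j}+d)$ is empty: since $l=0$, and since all orbifold points on the domain of a representable stable map are forced to be marked (Chen-Ruan convention), every disc component is a smooth holomorphic disc. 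By Theorem \ref{classification_orbidiscs} (or the analogous classification for smooth toric discs), the boundary class of any such smooth disc is a non-negative integer combination of $\bb_1,\ldots,\bb_m$, hence lies in $N_\bb$. Sphere/orbi-sphere bubble components contribute nothing to the boundary class. Summing over disc components, the total boundary must lie in $N_\bb$, contradicting $\partial(\beta_{\nu_j}+d)\notin N_\bb$. An empty moduli space carries trivial Kuranishi data, hence a vanishing virtual cycle, so the pushforward $ev_{0*}[\CM]^{vir}$ defining the invariant is zero.

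The main technical delicacy in the plan is ensuring that the Kuranishi-theoretic virtual count really does vanish when the set of honest holomorphic representatives is empty. This is automatic for the multisection perturbation scheme of \cite{CP}: Kuranishi charts are constructed around points of the (compactified) unperturbed moduli space, so if that space is empty then no charts, no thickening, and no virtual cycle arise. What must be verified is that no nodal configuration with orbifold nodes sneaks in to populate $\CM^{main}_{1,0}(L,\beta_{\nu_j}+d)$; representability of the stable map combined with the $l=0$ condition rules this out, since an orbifold node would force the attached disc component to carry a non-trivial local group at a non-marked point, which is not permitted. Putting the two steps together, each coefficient $\sum_\alpha n^Y_{\beta_j+\alpha}Q^\alpha$ with $j=m+1,\ldots,m'$ becomes zero after $Q=Q(q)$ and the specialization \eqref{specialization}, so every term of $W^{LF,\mathrm{excep}}_Y$ vanishes, as claimed.
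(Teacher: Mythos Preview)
Your overall strategy mirrors the paper exactly: invoke the equality \eqref{openmatch2} coming from the open CRC, set $\tau_\mathrm{tw}=0$ to kill all $l\geq 1$ contributions, and then argue that the surviving $l=0$ invariants $n^\mathcal{X}_{1,0,\beta_{\nu_j}+d}([\mathrm{pt}]_L)$ vanish. The paper's own justification of this last point is the sentence ``a holomorphic orbi-disc must have at least one interior orbifold marked point,'' and the theorem is stated as a direct consequence.

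However, your argument for the $l=0$ vanishing contains a genuine error. You assert that $\nu_j\in\mathrm{Box}'$ implies $\nu_j\notin N_\bb$, where $N_\bb$ is the sublattice generated by $\bb_1,\ldots,\bb_m$. This is false. The defining property of Box elements is that $\nu_j\notin N_{\bb_\sigma}$ for the \emph{specific cone} $\sigma$ containing $\nu_j$, not that $\nu_j$ avoids the full sublattice $N_\bb$. In the running example $\mathcal{X}=\proj(1,1,2)$ one has $\bb_1=(1,0)$, $\bb_2=(-1,2)$, $\bb_3=(0,-1)$, and $\nu_{1/2}=(0,1)=\bb_1+\bb_2+\bb_3$; so $\nu_{1/2}$ is even a non-negative integer combination of the stacky vectors, and your boundary obstruction does not apply. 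Your secondary claim that orbifold nodes are ``not permitted'' in the domain of a representable stable map with $l=0$ is also incorrect: balanced orbifold nodes are a standard feature of the compactification in orbifold Gromov--Witten theory.

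The correct mechanism is not lattice-theoretic but Maslov-theoretic, and it uses that $\mathcal{X}$ is the \emph{canonical} orbifold. If a stable disc with $l=0$ represents $\beta_{\nu_j}+d$ (with $\mu_{CW}=2$), then by semi-Fano the disc part has total Maslov index at most $2$; if the disc component is a smooth basic disc, its boundary is some $\bb_i$, forcing $\nu_j=\bb_i$. But for the canonical stacky fan every nontrivial Box element lies in the relative interior of a cone of dimension $\geq 2$ and hence is never a ray generator. If instead the disc component is an orbi-disc (with an orbifold node), the attached orbi-sphere bubble must be constant (Fano case) or have $c_1^{CW}=0$ (semi-Fano case), and stability with no interior marked points rules out the constant trees; this is the content behind Lemma~\ref{LEM_prim-basic}(ii). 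Replacing your $N_\bb$ argument with this Maslov/primitivity argument fixes the gap and aligns your proof with the paper's.
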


\section{A comparison theorem}\label{comparison_thm}

In this section, we derive an equality between open and closed invariants in the orbifold setting. We will first consider the case of Gorenstein toric Fano orbifolds (Theorem \ref{THM_open_closed}). The corresponding result (Theorem \ref{Thm_open_closed2}) for more general cases (i.e. semi-Fano and not necessarily Gorenstein) will be discussed at the end of this section.

\begin{theorem} \label{THM_open_closed}
Let $\mathcal{X}$ be a Gorenstein toric Fano orbifold (possibly non-compact) and $L$ a Lagrangian torus fiber. Suppose that there is a stable holomorphic orbifold disc in $\CM^{main}_{1,l}(L,\beta,\bx)$ for $\beta \in \pi_2(\mathcal{X},L)$, and further assume that $\mu_{CW}(\beta) = 2$. Here $\bx = (\mathcal{X}_{\nu_1},\ldots,\mathcal{X}_{\nu_l})$ for $\nu_i \in \{\nu\in Box' \mid \iota(\nu)=1\}$. Then there exist an explicit toric orbifold $\bar{\mathcal{X}}$ and an explicit homology class $\bar{\beta} \in H_2(\bar{\mathcal{X}};\Z)$ such that the following equality between open orbifold GW invariants of $\mathcal{X}$ and closed orbifold GW invariants of $\bar{\mathcal{X}}$ holds:
$$
n_{1,l,\beta}^\mathcal{X}([\mathrm{pt}]_{L};\mathbf{1}_{\nu_1},\ldots,\mathbf{1}_{\nu_l}) =
\langle [\mathrm{pt}]_{\bar{\mathcal{X}}}, \mathbf{1}_{\nu_1},\ldots,\mathbf{1}_{\nu_l} \rangle^{\bar{\mathcal{X}}}_{0,l+1,\bar{\beta}}
$$
where $[\mathrm{pt}]_{L} \in H^n(L;\rat)$ (resp. $[\mathrm{pt}]_{\bar{\mathcal{X}}} \in H^{2n}(\bar{X};\rat)$) denotes the point class of $L$ (resp. $\bar{\mathcal{X}}$).
\end{theorem}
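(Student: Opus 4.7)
The plan is to generalize the open/closed correspondence of \cite{Chan10, LLW10} (established for toric Fano manifolds) to the orbifold setting. The correspondence embeds each holomorphic orbi-disc in $\mathcal{X}$ into a holomorphic orbi-sphere in an auxiliary toric orbifold $\bar{\mathcal{X}}$ of one higher dimension, namely a $\proj^1$-orbi-bundle over $\mathcal{X}$. Concretely, write $\beta = \sum_{j=1}^{m'} k_j \beta_j \in \pi_2(\mathcal{X},L)$ using the basic classes $\beta_j$ (with $\beta_{\nu_j}$ for $j>m$), and set $\bb_0 := -\partial\beta = -\sum_j k_j \bb_j \in N$. Enlarge $N$ to $\bar N = N \oplus \Z$; the stacky fan of $\bar{\mathcal{X}}$ is obtained from $(\Sigma,\bb)$ by embedding every stacky vector as $(\bb_j,0)$, adding the two new stacky vectors $(\bb_0,1)$ and $(0,-1)$, and taking all cones $\sigma + \R_{\geq0}(\bb_0,1)$ and $\sigma + \R_{\geq0}(0,-1)$ for $\sigma\in\Sigma$. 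The Fano and Gorenstein hypotheses guarantee that $\bar{\mathcal{X}}$ is semi-Fano, and the class $\bar\beta\in H_2(\bar{\mathcal{X}};\Z)$ is taken to be the fibre class of the ruling; it satisfies $c_1(\bar\beta)=\tfrac12\mu_{CW}(\beta)+1-\sum_i\iota(\nu_i)\cdot0$ with the precise normalisation arranged so the closed virtual dimension matches $n$ after imposing the point constraint and the $l$ twisted insertions.

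The core geometric step is to set up a bijection between $\CM^{main}_{1,l}(L,\beta,\bx)$ and the component of $\CM_{0,l+1}(\bar{\mathcal{X}},\bar\beta,(\mathcal{X}_{\nu_1},\ldots,\mathcal{X}_{\nu_l},0))$ where the last (unramified) marked point is sent to the infinity divisor $D_\infty \subset \bar{\mathcal{X}}$ corresponding to $(0,-1)$. The construction is a doubling procedure: using the classification in Theorem \ref{classification_orbidiscs}, an orbi-disc $w:(\bD,\partial\bD)\to(\mathcal{X},L)$ in class $\beta$ lifts, via the ruling $\bar{\mathcal{X}}\to\mathcal{X}$, to an orbi-sphere $\bar w:\proj^1\to\bar{\mathcal{X}}$ meeting $D_\infty$ transversely at a single point that replaces the boundary marked point $z_0$, with the interior orbi-marked points $z_i^+$ preserved together with their isotropy data $h_i:\Z_{m_i}\hookrightarrow G_{w(z_i^+)}$. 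Conversely, a holomorphic orbi-sphere in $\bar\beta$ meeting $D_\infty$ once is forced by the ruling structure to split into the zero section and a fibre over a point of $L$; intersecting with $\mathcal{X}\times\{0\}$ recovers an orbi-disc. This $T^n$-equivariant bijection extends to stable maps by treating sphere bubbles attached to $w$ as bubbles attached to $\bar w$ in the zero section.

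The analytical step is to match Kuranishi structures. The linearised $\bar\partial$-operator on the orbi-disc side splits along the normal and tangential directions of the ruling, and the splitting identifies its kernel and cokernel with those of the closed linearisation on the sphere side; this is the orbifold extension of the argument of \cite{Chan10}. Since both moduli spaces admit $T^n$-invariant multi-sections compatible with the forgetful map to the bijection above (the multi-sections of \cite{CP} on the open side lift to $T^n$-invariant multi-sections of the closed moduli once we choose an equivariant representative of $[\mathrm{pt}]_{\bar{\mathcal{X}}}$ supported on a fibre through a point of $L\subset D_\infty$), the virtual counts agree. The Gorenstein+Fano assumption guarantees that no lower Maslov-index sub-disc or Maslov-negative orbi-sphere bubble can appear, so boundary strata with positive virtual dimension do not contribute and the perturbed moduli space is cobordant to its closed counterpart.

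The main obstacle will be step three, namely the precise comparison of Kuranishi structures at strata containing orbi-sphere bubbles attached to the disc component. One must verify that the obstruction bundle constructed in \cite{CP} at such a configuration agrees, under the doubling, with the Chen--Ruan obstruction bundle on the closed side; this requires tracing through the isotropy groups $G_{w(z_i^+)}$ and the injectivity of the local group homomorphisms \eqref{eq:locgphi} to ensure the resulting $\bar w$ is still good and representable. Once this matching is done, pairing $\mathrm{ev}_0$ with $[\mathrm{pt}]_L$ on the open side is identified with pairing $\mathrm{ev}_{l+1}$ with $[\mathrm{pt}]_{\bar{\mathcal{X}}}$ on the closed side, completing the proof.
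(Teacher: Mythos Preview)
Your proposal misidentifies the auxiliary space $\bar{\mathcal{X}}$, and this derails the rest of the argument. The paper's $\bar{\mathcal{X}}$ is \emph{not} a $\proj^1$-bundle over $\mathcal{X}$ in one higher dimension; it is a same-dimensional toric birational modification of $\mathcal{X}$. Concretely, $\beta$ is first shown (Proposition~\ref{Prp_disk_form}) to be a basic disc class, one sets $\bb_0=\partial\beta\in N$ and $\bb_\infty=-\bb_0$, and $\bar\Sigma$ is the stacky refinement of $\Sigma$ obtained by inserting the single new ray $\bb_\infty$ into the cone of $\Sigma$ that contains it. Thus $\bar{\mathcal{X}}$ and $\mathcal{X}$ share the open torus $(\C^*)^n$ and the Lagrangian $L$, and $\bar\beta:=\beta+\beta_\infty\in H_2(\bar{\mathcal{X}};\Z)$ is the class of the orbi-sphere obtained by gluing the unique basic disc in $\beta$ through a fixed $p\in L$ to the unique smooth basic disc in $\beta_\infty$ through $p$. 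Note also that the theorem statement already forces $\dim_\C\bar{\mathcal{X}}=n$ via $[\mathrm{pt}]_{\bar{\mathcal{X}}}\in H^{2n}$, which your construction violates.

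Because of this, your proposed bijection is not well-defined: if $\bar\beta$ were a fibre class of a ruling $\bar{\mathcal{X}}\to\mathcal{X}$, a representative would project to a point of $\mathcal{X}$, not to the image of a disc, so there is no ``doubling'' that turns a disc in $\mathcal{X}$ into such a fibre. The paper's bijection is instead: the Fano and Gorenstein hypotheses force every stable disc $u\in\CM^{\mathrm{op}}_{1,l}(\beta,\bx;p)$ to be the unique basic (orbi-)disc $u_0$ through $p$ with a \emph{constant} rational orbi-curve $\mathcal{C}$ attached at the interior orbi-point, and this matches the unique stable sphere $\bar u_0$ through $p$ with the same constant $\mathcal{C}$ attached. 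The Kuranishi comparison then hinges on the fact that both $u_0$ and $\bar u_0$ are regular, so the obstruction space on each side is supported entirely on the identical bubble tree $\mathcal{C}$; one takes $E_0=0$ on the disc/sphere component and $E_{\mathrm{op}}=E_{\mathrm{cl}}=0\oplus E_1\oplus\cdots\oplus E_l$. Your outline of the Kuranishi matching via a normal/tangential splitting along a ruling has no counterpart here and would not apply even if the bundle construction made sense, since the actual obstruction lives on constant ghost components, not on a nontrivial sphere component.
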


The proof of Theorem \ref{THM_open_closed} is an adaptation of the proof of Theorem 1.1 in \cite{Chan10} and the proof of Proposition 4.4 in \cite{LLW10} to the orbifold setting. One major difference is that in the setting of \cite{Chan10, LLW10}, no interior insertions are allowed. In contrast, the open orbifold GW invariants considered in this paper are allowed to have interior orbi-insertions $\mathbf{1}_{\nu_i}$. Notice that the Divisor Axiom is not valid for orbi-insertions even for degree two.

In the orbifold case, bubbling components of the disc are constantly mapped to an orbifold point in $\mathcal{X}$ (by stability the bubbling components have to contain orbifold marked points such that the domain is stable), while in \cite{Chan10,LLW10}, bubbling components are mapped to rational curves with Chern number zero.  The assumption that rational curves do not deform away was required.  On the other hand, orbi-strata have the advantage that it is invariant under torus action and hence automatically cannot deform away.

Also notice that the toric modification $\bar{\mathcal{X}}$ can be stacky even when $\mathcal{X}$ is non-stacky, because the newly added vector $\bb_\infty$ may not be primitive. This situation does not occur in the manifold case since a basic disc class always corresponds to a primitive vector in the manifold setting.

To construct $(\bar{\mathcal{X}},\bar{\beta})$ explicitly, we need the following proposition:

\begin{prop} \label{Prp_disk_form}
Assume the notations and conditions in Theorem \ref{THM_open_closed}. Then every stable holomorphic orbi-disc $u \in \CM^{main}_{1,l}(L,\beta,\bx)$ representing the class $\beta \in \pi_2(\mathcal{X},L)$ satisfies the following:
\begin{enumerate}
\item The domain of $u$ is connected and it consists of one disc component $\bD$ and possibly a connected rational curve $\mathcal{C}$ consisting of (orbi-)sphere components.
\item $u_0 := u|_\bD$ represents $\beta$ and $\beta$ must be a basic disc class.
\item When $l=0,1$, $\Dom (u) = \bD$.  When $l\geq 2$,  $\mathcal{C}$ is non-empty.
\item Suppose that $l \geq 2$. When $\beta$ is a basic disc class represented by a smooth holomorphic disc, $\bD$ and $\mathcal{C}$ intersect at an ordinary nodal point.  When $\beta$ is a basic disc class represented by a holomorphic orbi-disc, $\bD$ and $\mathcal{C}$ intersect at an orbifold nodal point.  Denote the image of the (orbifold) nodal point under $u$ by $p \in \mathcal{X}$.
\item $u|_\mathcal{C} = p \in \mathcal{X}$.
\end{enumerate}
\end{prop}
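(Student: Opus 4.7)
The plan is to analyze the nodal, genus-zero bordered orbifold domain of $u$ component by component, combining additivity of the Chern--Weil Maslov index with the Gorenstein Fano hypothesis to isolate a unique non-constant component, and then to invoke the classification of basic (orbi-)discs from Theorem \ref{classification_orbidiscs} to identify it as a basic disc. Writing the domain as a tree of irreducible disc and (orbi-)sphere components, additivity gives
\begin{equation*}
2 \;=\; \mu_{CW}(\beta) \;=\; \sum_i \mu_{CW}(u|_{\bD_i}) \;+\; 2\sum_j c_1^{CW}(u|_{S_j}).
\end{equation*}
Gorenstein together with Theorem \ref{classification_orbidiscs} forces $\mu_{CW}(u|_{\bD_i})\geq 2$ on every non-constant disc component (a basic orbi-disc in class $\beta_\nu$ has Maslov index $2\iota(\nu)\geq 2$, and more general classes only grow by $2d_j$-terms), and the Fano hypothesis gives $c_1^{CW}(u|_{S_j})\geq 1$ on every non-constant (orbi-)sphere. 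Hence exactly one component is non-constant, with Maslov/Chern contribution $2$. Since basic disc classes have non-trivial image under $\partial\colon\pi_2(\mathcal{X},L)\to\pi_1(L)$ --- which we shall confirm in a moment --- the class $\beta$ has $\partial\beta\neq 0$ and the unique non-constant component must be a disc $\bD$. Applying Theorem \ref{classification_orbidiscs} to $u_0:=u|_\bD$ with $\mu_{CW}(u_0)=2$ then pins $u_0$ down to either a smooth basic disc in class $\beta_{j_0}$ (no orbifold marked point on $\bD$) or a basic orbi-disc in class $\beta_\nu$ with $\iota(\nu)=1$ (exactly one orbifold marked point on $\bD$); the remaining components are constant. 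This proves (1), (2) and (5).

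Next I would rule out disc bubbles. A constant disc bubble $\bD'$ has image a single point of $L$, which lies in the smooth locus of $\mathcal{X}$ because Lagrangian torus fibers are chosen generic. The local group at that image point is trivial, so representability \eqref{eq:locgphi} forbids any orbifold marked point on $\bD'$. Without orbifold marked points $\bD'$ has at most two boundary special points (the node to $\bD$ and possibly $z_0$), which is insufficient for disc stability. Hence no disc bubbles occur and $\Dom(u)\setminus\bD$ is a (possibly empty) union of sphere trees.

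Part (3) is a direct stability count: each constant (orbi-)sphere needs at least three special points, where special means a node or an orbi-marked point. When $l=0$ no such tree is stable, so $\mathcal{C}=\emptyset$. When $l=1$ the single orbi-marked point must sit on $\bD$ (otherwise the one sphere bubble carrying it has only two special points, the orbi-marked point and the node, and is unstable), and again $\mathcal{C}=\emptyset$. For $l\geq 2$, since $\bD$ carries at most one orbi-marked point there must be at least one orbi-marked point on a sphere component, so $\mathcal{C}\neq\emptyset$.

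The final and hardest step is part (4): showing $\mathcal{C}$ is a single tree attached at one node, of the prescribed type. Each connected sphere tree collapses under $u$ to the single image $u_0(n_k)\in\mathcal{X}$ (where $n_k\in\bD$ is its attaching node), and representability forces $u_0(n_k)$ to lie in every twisted sector indexed by the orbi-marked points sitting on that tree. The explicit parametrization in Theorem \ref{classification_orbidiscs} shows that $u_0$ meets any non-trivial orbifold stratum of $\mathcal{X}$ at most at one distinguished interior point: for a smooth basic disc $\beta_{j_0}$ this is the unique transverse intersection with $D_{j_0}$, and for a basic orbi-disc $\beta_\nu$ it is the image $u_0(z_1^+)\in\mathcal{X}_\nu$ of the orbifold marked point on $\bD$. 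Hence only one node $n$ on $\bD$ can carry a sphere tree, so $\mathcal{C}$ is connected and meets $\bD$ at exactly one point. In the smooth basic disc case $\bD$ has no orbifold structure at $n$, so the node is ordinary; in the basic orbi-disc case, injectivity of $h\colon\Z_m\to G_{w(n)}$ (representability) forces $n$ to coincide with the unique orbifold marked point $z_1^+$ on $\bD$, yielding an orbifold node. The main obstacle will be precisely this step, where the rigidity of basic disc images --- they cross each twisted sector at only one point --- has to be combined with the representability matching of local groups at the orbifold node.
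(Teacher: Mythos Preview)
Your proposal is correct and follows essentially the same approach as the paper: additivity of $\mu_{CW}$ plus the Gorenstein/Fano bounds to isolate a single Maslov-two disc component with all spheres constant, then the classification Theorem \ref{classification_orbidiscs} to identify it as basic, and finally the observation that a basic disc meets $\mathcal{X}\setminus\mathcal{X}^\circ$ at exactly one interior point to force $\mathcal{C}$ connected and attached at that point. Your explicit treatment of constant disc bubbles via stability and representability is a detail the paper glosses over, and your detour through $\partial\beta\neq 0$ is unnecessary (a bordered stable map must have a disc component, and you later rule out constant ones anyway), but neither affects the correctness or overall strategy.
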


\begin{proof}
By the definition of a stable holomorphic orbi-disc, the domain of $u$ must consist of (orbi-)disc components $\bD_1$, \ldots, $\bD_j$ and (orbi-)sphere components $\mathcal{C}_1, \ldots, \mathcal{C}_k$. One has
$$\mu_{CW}(u) = \mu_{CW} (u|_{\bD_1}) + \ldots + \mu_{CW}(u|_{\bD_j}) + 2 c_1^{CW}(u|_{\mathcal{C}_1}) + \ldots + 2 c_1^{CW}(u|_{\mathcal{C}_k}).$$
Since every class represented by holomorphic (orbi-)discs are generated by basic disc classes and each basic class has $\mu_{CW} \geq 2$ because $\mathcal{X}$ is Gorenstein, we have $\mu_{CW}([u|_{\bD_i}]) \geq 2$ for all $i$. On the other hand, by the assumption that $\mathcal{X}$ is Fano, we have $c_1^{CW}(u|_{\mathcal{C}_i}) \geq 0$ and equality holds if and only if $u$ is constant on $\mathcal{C}_i$ for each $i$. Therefore the condition $\mu_{CW}(u) = 2$ forces us to have $j=1$ and $u$ is constant on each $\mathcal{C}_i$. This proves (1).

Denote $\bD = \bD_1$ so that $\beta = [u|_\bD]$. Now the class of $u_0:=u|_\bD$ is generated by basic disc classes. But since $\mu_{CW}([u_0])=2$, $[u_0]$ has to be one of the basic disc classes. This proves (2).

Suppose that $l=0$ or $1$. Then by the stability of the map $u$, the domain $\Dom (u)$ cannot have constant (orbi-)sphere components, and hence $\Dom (u) = \bD$. When $l \geq 2$, since $\bD$ has at most one interior orbi-marked point (by the classification, i.e. Theorem \ref{classification_orbidiscs}, for holomorphic orbi-discs), $\Dom (u)$ must consist of some (orbi-)sphere components. This proves (3).

Assuming that $l \geq 2$. Then there are two cases: $u|_\bD$ is a smooth holomorphic disc, or $u|_\bD$ is a holomorphic orbi-disc. Since $[u|_\bD]=\beta$ is basic, in both cases $u|_\bD$ intersect $\mathcal{X}\setminus\mathcal{X}^\circ$ at exactly one interior point $z$ (here $\mathcal{X}^\circ := \mathcal{X}\setminus\bigcup_{j=1}^m D_j \cong (\C^*)^n$ is the open dense toric stratum of $\mathcal{X}$). Let $p=u(z)\in \mathcal{X}$ be the point of intersection. Notice that all orbifold points of $\mathcal{X}$ lie in $\mathcal{X}\setminus\mathcal{X}^\circ$. Since an orbifold marked point must be mapped to an orbifold point of $\mathcal{X}$ and $u$ is constant on $\mathcal{C}_i$, we have $u|_{\mathcal{C}_i} \equiv p$. This forces $\mathcal{C} := \bigcup_i \mathcal{C}_i$ to be a connected rational curve and $\bD$ intersects $\mathcal{C}$ at $z \in \bD$. When $u|_\bD$ is a smooth holomorphic disc, the intersection has to be an ordinary nodal point. When $u|_\bD$ is an holomorphic orbi-disc, $z$ is an orbifold point, and so the intersection has to be an orbifold nodal point. This proves (4) and (5).
\end{proof}

We are now ready to construct $(\bar{\mathcal{X}}, \bar{\beta})$, assuming the setting of Theorem \ref{THM_open_closed}.

\begin{defn} \label{Defn_barX}
Assume the notations and conditions in Theorem \ref{THM_open_closed}. By Proposition \ref{Prp_disk_form}, $\beta$ must be one of the basic disc classes.  Let $\bb_0 = \partial \beta \in N$ and consider $\bb_\infty := -\bb_0$.

Let $C = \langle \bb_{i_1}, \ldots, \bb_{i_l} \rangle_{\real_{\geq 0}}$ be the minimal cone of $\Sigma$ which contains $\bb_\infty$.  If $l = 1$ (which means $\bb_\infty$ is contained in a ray of $\Sigma$), one replaces $\bb_{i_1}$ by $\bb_\infty$ and obtains a new stacky fan $\bar{\Sigma}$.  If $l > 1$, consider the subdivision of $C$ by subcones $\langle \bb_\infty, \bb_{i_1}, \ldots, \widehat{\bb_{i_j}}, \ldots, \bb_{i_l} \rangle_{\real_{\geq 0}}$ for $j = 1, \ldots, l$.  This subdivision induces a subdivision of any cone $\tilde{C} = \langle \bb_{i_1}, \ldots, \bb_{i_l}, \bb_{k_1}, \ldots, \bb_{k_p} \rangle_{\real_{\geq 0}}$ containing $C$, where the subcones are given by $\langle \bb_\infty, \bb_{i_1}, \ldots, \widehat{\bb_{i_j}}, \ldots, \bb_{i_l}, \bb_{k_1}, \ldots, \bb_{k_p} \rangle_{\real_{\geq 0}}$.  Thus one obtains a new stacky fan $\bar{\Sigma}$ which is a refinement of $\Sigma$, and whose set of stacky vectors is a union of that of $\Sigma$ and $\{\bb_\infty\}$.  Then let $\bar{\mathcal{X}}$ be the toric orbifold associated to the stacky fan $\bar{\Sigma}$.

Denote by $\beta_\infty \in \pi_2(\bar{\mathcal{X}}, L)$ the basic disc class corresponding to $\bb_\infty$.  Since $\partial (\beta + \beta_\infty) = \bb_0 + (-\bb_0) = 0$, $\bar{\beta}:= \beta + \beta_\infty$ belongs to $H_2(\bar{\mathcal{X}};\Z)$.  This finishes the construction of $(\bar{\mathcal{X}}, \bar{\beta})$.
\end{defn}

We shall now proceed to the proof of Theorem \ref{THM_open_closed}. Orbifold smoothness is used here instead of ordinary smoothness for manifolds.

\begin{proof}[Proof of Theorem \ref{THM_open_closed}]
The strategy is to prove that the open moduli and the closed moduli have the same Kuranishi structures.

First of all, let us set up some notations. Let $\bx = (\mathcal{X}_{\nu_1},\ldots,\mathcal{X}_{\nu_l})$ be the type of $\beta$. Recall that this records the twisted sectors that the interior orbifold marked points of a map representing $\beta$ pass through.  Denote the twisted sector of $\bar{\mathcal{X}}$ which corresponds to $\mathcal{X}_{\nu_i}$ by $\bar{\mathcal{X}}_{\nu_i}$. Then set $\bar{\bx} = (\bar{\mathcal{X}},\bar{\mathcal{X}}_{\nu_1},\ldots,\bar{\mathcal{X}}_{\nu_l})$ (recall that $\bar{\mathcal{X}}=\bar{\mathcal{X}}_0$ is the trivial twisted sector).

Denote by $\CM^{\mathrm{op}}_{1,l}(\beta,\bx):=\CM^{main}_{1,l}(L,\beta,\bx)$ the moduli space of stable maps from genus 0 bordered orbifold Riemann surfaces with one boundary marked point and $l$ interior orbifold marked points of type $\bx$ representing the class $\beta$, and by $\CM^{\mathrm{cl}}_{l+1}(\bar{\beta},\bar{\bx})$ the moduli space of stable maps from genus zero nodal orbifold curves with one smooth marked point and $l$ orbifold marked points of type $\bar{\mathbf{x}}$ representing the class $\bar{\beta}$.

Fix a point $p \in L$ and define
$$\CM^{\mathrm{op}}_{1,l}(\beta,\bx;p) := \CM^{\mathrm{op}}_{1,l}(\beta,\bx)_{ev_0}\times_\iota \{p\} $$
where we use the evaluation map $ev_0:\CM^{\mathrm{op}}_{1,l}(\beta,\bx) \to L$ at the boundary marked point and the inclusion map $\iota: \{p\} \hookrightarrow L$ in the fiber product. Similarly, we define
$$\CM^{\mathrm{cl}}_{l+1}(\bar{\beta},\bar{\bx};p) := \CM^{\mathrm{cl}}_{l+1} (\bar{\beta}, \bar{\bx})_{ev_0}\times_{\bar{\iota}} \{p\}$$
where we use the evaluation map $ev_0:\CM^{\mathrm{cl}}_{l+1} (\bar{\beta}, \bar{\bx}) \to \bar{\mathcal{X}}$ at the smooth marked point and the inclusion map $\bar{\iota}: \{p\} \hookrightarrow \bar{\mathcal{X}}$ in the fiber product.

$\CM^{\mathrm{op}}_{1,l}(\beta,\bx)$ is equipped with an oriented Kuranishi structure with tangent bundle. By Lemma A1.39 of \cite{FOOO_I,FOOO_II} on fiber products, this induces an oriented Kuranishi structure with tangent bundle on $\CM^{\mathrm{op}}_{1,l}(\beta,\bx;p)$.  Similarly $\CM^{\mathrm{cl}}_{l+1}(\bar{\beta}, \bar{\bx})$ and $\CM^{\mathrm{cl}}_{l+1}(\bar{\beta},\bar{\bx};p)$ are both equipped with an oriented Kuranishi structure with tangent bundle.

Let us begin by computing the virtual dimensions. The (real) virtual dimension of $\CM^{\mathrm{op}}_{1,l}(\beta,\bx;p)$ is given by
$$ \mu_{CW}(\beta) + 1 + 2l - 3 - 2\iota(\bx) = \mu_{CW}(\beta) + 2l - 2\iota(\bx) - 2,$$
where $\iota(\bx)=\sum_{i=1}^l \iota(\nu_i)$; while the (real) virtual dimension of $\CM^{\mathrm{cl}}_{l+1}(\bar{\beta},\bar{\bx};p)$ is given by
$$ 2 c_1^{CW}(\bar{\beta}) + 2(l+1) - 6 - 2\iota(\bx) = 2 c_1^{CW}(\bar{\beta}) + 2l - 2\iota(\bx) - 4.$$
Now
$$2 c_1^{CW}(\bar{\beta}) = \mu_{CW}(\beta) + \mu_{CW}(\beta_\infty) = \mu_{CW}(\beta) + 2,$$
where we have $\mu_{CW}(\beta_\infty) = 2$ because $\beta_\infty$ is a smooth basic disc class. Thus we see that they have the same virtual dimension (in fact, since $\mu_{CW}(\beta)=2$ and $\iota(\bx)=l$, they both have virtual dimension zero). In the following we prove that they are isomorphic as Kuranishi spaces. The proof is divided into 3 steps:\\

\noindent\emph{Step 1}: We have
$$\CM^{\mathrm{op}}_{1,l}(\beta,\bx;p) = \CM^{\mathrm{cl}}_{l+1}(\bar{\beta},\bar{\bx};p)$$
as a set.

\noindent\emph{Proof}:
By Proposition \ref{Prp_disk_form}, the domain of every stable map $u$ with one boundary marked point and $l$ interior orbifold marked points representing $\beta$ consists of an orbi-disc component representing $\beta$ and some constant orbi-sphere components. By Cho-Poddar \cite{CP}, $\beta$ corresponds to a twisted sector $\mathcal{X}_{\nu}$ of $\mathcal{X}$ and the evaluation map $ev_0:\CM^{main}_{1,1}(L,\beta,\mathcal{X}_\nu) \to L$ at the boundary marked point is a diffeomorphism. This means that there exists a unique (up to automorphisms of domain) holomorphic orbi-disc $u_0$ representing $\beta$ and passing through $p\in L$. Thus every such stable disc $u \in \CM^{\mathrm{op}}_{1,l}(\beta,\bx;p)$ has the same holomorphic orbi-disc component $u_0$. In conclusion, $u$ is $u_0$ attached with a constant rational orbi-curve at its only interior orbi-point.

Let $\bar{\beta}:= \beta + \beta_\infty \in H_2(\bar{\mathcal{X}};\Z)$. By the maximum principle, any rational orbi-curve representing $\bar{\beta}$ passing through $p\in\bar{\mathcal{X}}$ is unique (again up to automorphisms of domain); we call this curve $\bar{u}_0$.  Now let $\bar{u} \in \CM^{\mathrm{cl}}_{l+1}(\bar{\beta},\bar{\bx};p)$. Applying the maximum principle to $\bar{u}$ shows that any component of $u$ passing through $p$ must be $\bar{u}_0$.  Since $\bar{u}$ passes through $p$, it must contain such a component.  Moreover, since $\bar{u}$ and $\bar{u}_0$ have the same $c_1^{CW}$, and every non-trivial rational curve has $c_1^{CW} > 0$ because $\mathcal{X}$ is Fano, the restrictions of $\bar{u}$ to all the other orbi-sphere components are constant maps. By connectedness they have to be mapped to the same point. Moreover, by the stability of $\bar{u}$, they are mapped to the image of the unique orbi-point of $\bar{u}_0$. In conclusion, $\bar{u}$ is $\bar{u}_0$ attached with a constant rational orbi-curve at its only orbi-point.

Now it is not difficult to see that there is a one-to-one correspondence between  $\CM^{\mathrm{op}}_{1,l}(\beta,\bx;p)$ and $\CM^{\mathrm{cl}}_{l+1}(\bar{\beta},\bar{\bx};p)$: Any stable map $u \in \CM^{\mathrm{op}}_{1,l}(\beta,\bx;p)$ is given by $u_0$ attached with a constant rational orbi-curve at the unique interior orbi-point. We associate to it the stable map given by $\bar{u}_0$ attached with the same constant rational orbi-curve at its only orbi-point, which is an element of $\CM^{\mathrm{cl}}_{l+1}(\bar{\beta},\bar{\bx};p)$. Conversely any $\bar{u}
\in \CM^{\mathrm{cl}}_{l+1}(\bar{\beta},\bar{\bx};p)$ is $\bar{u}_0$ attached with a constant rational orbi-curve at its only orbi-point, and it can be associated to an element of $\CM^{\mathrm{op}}_{1,l}(\beta,\bx;p)$ in the same way.\\

\noindent\emph{Step 2}: We have the following equality between virtual cycles
$$[\CM^{\mathrm{op}}_{1,l}(\beta,\bx;p)]^{vir} = \iota^* [\CM^{\mathrm{op}}_{1,l}(\beta,\bx)]^{vir},$$
where $\iota: \{p\} \hookrightarrow L$ is the inclusion map. Similarly, we have
$$[\CM^{\mathrm{cl}}_{l+1}(\bar{\beta},\bar{\bx};p)]^{vir} =  \bar{\iota}^* [\CM^{\mathrm{cl}}_{l+1}(\bar{\beta},\bar{\bx})]^{vir},$$
where $\bar{\iota}: \{p\} \hookrightarrow \bar{\mathcal{X}}$ is the inclusion map.

\noindent\emph{Proof}: This follows directly from Lemma A1.43 of \cite{FOOO_I,FOOO_II}.\\

\noindent\emph{Step 3}: The Kuranishi structure on $\CM^{\mathrm{op}}_{1,l}(\beta,\bx;p)$ is the same as that on $\CM^{\mathrm{cl}}_{l+1}(\bar{\beta},\bar{\bx};p)$, and so
$$ev_{0*}[\CM^{\mathrm{op}}_{1,l}(\beta,\bx;p)]^{vir} = ev_{0*}[\CM^{\mathrm{cl}}_{l+1}(\bar{\beta},\bar{\bx};p)]^{vir}$$
as cycles in $H_0(\{p\};\rat)\cong\rat$. It then follows that the open orbifold GW invariant
$$n_{1,l,\beta}^\mathcal{X}([\mathrm{pt}]_{L};\mathbf{1}_{\nu_1},\ldots,\mathbf{1}_{\nu_l}) = ev_{0*}[\CM^{\mathrm{op}}_{1,l}(\beta,\bx;p)]^{vir}$$
is equal to the closed orbifold GW invariant
$$\langle [\mathrm{pt}]_{\bar{\mathcal{X}}}, \mathbf{1}_{\nu_1},\ldots,\mathbf{1}_{\nu_l} \rangle^{\bar{\mathcal{X}}}_{0,l+1,\bar{\beta}} = ev_{0*}[\CM^{\mathrm{cl}}_{l+1}(\bar{\beta},\bar{\bx};p)]^{vir}.$$

\noindent\emph{Proof}: Let $[\bar{u}] \in \CM^{\mathrm{cl}}_{l+1}(\bar{\beta},\bar{\bx};p)$ which corresponds to the element $[u] \in \CM^{\mathrm{op}}_{1,l}(\beta,\bx;p)$ by \emph{Step 1}.  $u$ consists of one disc component $u_0$ and a rational curve component.  The key observation is that since $u_0$ is regular, the obstruction merely comes from the rational curve component.  Similarly $\bar{u}$ consists of one smooth component $\bar{u}_0$ and the same rational curve component, and the obstruction again merely comes from this rational curve component.  Thus obstructions of $u$ can be identified with obstructions of $\bar{u}$ so that they have the same Kuranishi structures. Thus their virtual fundamental cycles are identical.

To make the above argument precise, let us briefly review the construction of Kuranishi structures in this situation. One has a Kuranishi chart
$$(V_{\mathrm{op}}, E_{\mathrm{op}}, \Gamma_{\mathrm{op}}, \psi_{\mathrm{op}}, s_{\mathrm{op}})$$
around $u$ which is constructed as follows \cite{FOOO_I, FOOO_II}.  Let
$$D_{u} \bar{\partial}:W^{1,p}(\mathrm{Dom}(u), u^*(T\mathcal{X}), L) \to W^{0,p}(\mathrm{Dom}(u), u^*(T\mathcal{X}) \otimes \Lambda^{0,1})$$
be the linearized Cauchy-Riemann operator at $u$.
\begin{enumerate}
\item
$\Gamma_{\mathrm{op}}$ is the finite automorphism group of $u$.

\item
$E_{\mathrm{op}}$ is the obstruction space which is the finite dimensional cokernel of the linearized Cauchy-Riemann operator $D_{u} \bar{\partial}$.  For the purpose of the next step of the construction, it is identified (in a non-canonical way) with a subspace of $W^{0,p}(\mathrm{Dom}(u), u^*(T\mathcal{X}) \otimes \Lambda^{0,1})$ as follows.  Denote by $\bD$ and $\mathcal{C}_1, \ldots, \mathcal{C}_l$ the (orbi-)disc and (orbi-)sphere components of $\mathrm{Dom}(u)$ respectively.   Take non-empty open subsets $W_0 \subset \bD$ and $W_i \subset \mathcal{C}_i$ for $i = 1, \ldots, l$.  Then by the unique continuation theorem, there exist finite dimensional subspaces $E_i \subset C_0^\infty (W_i, u^*(T\mathcal{X})\otimes \Lambda^{0,1})$ such that
$$\mathrm{Im} (D_{u} \bar{\partial}) \oplus E_{\mathrm{op}} = W^{0,p}(\mathrm{Dom}(u), u^*(T\mathcal{X}) \otimes \Lambda^{0,1})$$
and $E_{\mathrm{op}}$ is invariant under $\Gamma_{\mathrm{op}}$, where
$$E_{\mathrm{op}} := E_0 \oplus \ldots \oplus E_l.$$

\item
$\tilde{V}_{\mathrm{op}}$ is taken to be (a neighborhood of $0$ of) the space of first order deformations $\phi$ of $u$ which satisfies the linearized Cauchy-Riemann equation modulo elements in $E$, that is,
$$ D_{u} \bar{\partial} \phi \equiv 0 \,\, \mod E. $$
Such deformations may come from deformations of the map or deformations of complex structures of the domain.  More precisely,
$$\tilde{V}_{\mathrm{op}} = V_{\mathrm{op}}^{\mathrm{map}} \times V_{\mathrm{op}}^{\mathrm{dom}}$$
where $V_{\mathrm{op}}^{\mathrm{map}}$ is a neighborhood of zero in the kernel of the linear map
$$[D_{u} \bar{\partial}]:W^{1,p}(\mathrm{Dom}(u), u^*(T\mathcal{X}), L) \to W^{0,p}(\mathrm{Dom}(u), u^*(T\mathcal{X}) \otimes \Lambda^{0,1})/E_{\mathrm{op}}.$$
We remark that since $\mathrm{Dom}(u)$ is automatically stable in our case, there is no infinitesimal automorphism of the domain that in general one needs to quotient out.

$V_{\mathrm{op}}^{\mathrm{dom}}$ is a neighborhood of zero in the space of deformations of $\mathcal{C}=\bigcup_i \mathcal{C}_i$ which is the rational curve component of $\mathrm{Dom}(u)$ consisting of $l$ orbifold marked points. They consist of two types: one is deformations of each stable component (in this genus zero case, it means movements of special points in each component), and the other one is smoothing of nodes between components.  That is,
$$V_{\mathrm{op}}^{\mathrm{dom}} = V_{\mathrm{op}}^{\mathrm{cpnt}} \times V_{\mathrm{op}}^{\mathrm{smth}}$$
where $V_{\mathrm{op}}^{\mathrm{cpnt}}$ is a neighborhood of zero in the space of deformations of components of $C$, and $V_{\mathrm{op}}^{\mathrm{smth}}$ is a neighborhood of zero in the space of smoothing of the (orbifold) nodes (each node contribute to a one-dimensional family of smoothing).
Each element $\mathscr{D} \in V_{\mathrm{op}}^{\mathrm{dom}}$ corresponds to a stable holomorphic orbi-disc which is of the form $\bD \cup \tilde{\mathcal{C}}$, where $\bD$ is an orbi-disc with one boundary marked point and one interior orbifold marked point, and $\tilde{\mathcal{C}}$ is a rational curve with $l$ interior orbifold marked point, such that $\bD$ and $\tilde{\mathcal{C}}$ intersect at a nodal orbifold point.  By abuse of notation the orbi-disc is also denoted by $\mathscr{D}$, which serves as the domain of the deformed map in this context.

\item
$\tilde{s}_{\mathrm{op}}:\tilde{V}_{\mathrm{op}} \to E_{\mathrm{op}}$ is a transversal $\Gamma_{\mathrm{op}}$-equivariant perturbed zero-section of the trivial bundle $E_{\mathrm{op}} \times \tilde{V}_{\mathrm{op}}$ over $\tilde{V}_{\mathrm{op}}$.  By \cite{FOOO1} or \cite{CP}, this can be chosen to be $T^n$-equivariant.

\item There exists a continuous family of smooth maps $\rho^{\mathrm{op}}_{\phi}: (\mathscr{D},\partial\mathscr{D}) \to (\mathcal{X},L)$ for $\phi \in \tilde{V}_{\mathrm{op}}$ such that it solves the inhomogeneous Cauchy-Riemann equation: $\bar{\partial} \rho^{\mathrm{op}}_{\phi} = \tilde{s}_{\mathrm{op}}(\phi).$  Set
$$V_{\mathrm{op}} := \{\phi \in \tilde{V}_{\mathrm{op}}: ev_0 (\rho^{\mathrm{op}}_{\phi}) = p\}$$
where $ev_0$ is the evaluation map at the domain boundary marked point.  Then set $s_{\mathrm{op}} := \tilde{s}_{\mathrm{op}}|_{V_{\mathrm{op}}}$.

\item
$\psi_{\mathrm{op}}$ is a map from $s_{\mathrm{op}}^{-1} (0) / \Gamma_{\mathrm{op}}$ onto a neighborhood of $[u] \in \CM^{\mathrm{op}}_{1,l}(\beta,\bx;p)$.
\end{enumerate}

In Item (2) of the above construction, since the disc component $u_0$ of $u$ is unobstructed, we may take $E_0 = 0$ so that $E_{\mathrm{op}}$ can be taken to be of the form $E_{\mathrm{op}} = 0 \oplus E_1 \oplus \ldots \oplus E_l$. After this choice, we argue that $(V_{\mathrm{op}}, E_{\mathrm{op}}, \Gamma_{\mathrm{op}}, \psi_{\mathrm{op}}, s_{\mathrm{op}})$ can be identified with a Kuranishi chart $(V_{\mathrm{cl}}, E_{\mathrm{cl}}, \Gamma_{\mathrm{cl}}, \psi_{\mathrm{cl}}, s_{\mathrm{cl}})$ around the corresponding closed curve $\bar{u}$.

\begin{enumerate}
\item From the construction of the one-to-one correspondence between $u$ and $\bar{u}$, we see that $u$ and $\bar{u}$ have the same automorphism group, i.e.
    $$\Gamma_{\mathrm{cl}} = \Gamma_{\mathrm{op}}.$$

\item Notice that $\bar{u}_0$ has trivial obstruction.  Also all the other components are the same for $u$ and $\bar{u}$ so that $D_u \bar{\partial} = D_{\bar{u}} \bar{\partial}$ on these components.  It follows that
$$ \mathrm{Im} (D_{\bar{u}} \bar{\partial}) \oplus (0 \oplus E_1 \oplus \ldots \oplus E_l) = W^{0,p}(\mathrm{Dom}(\bar{u}), \bar{u}^*(T\mathcal{X}) \otimes \Lambda^{0,1}). $$
Thus we may also take $E_{\mathrm{cl}} = 0 \oplus E_1 \oplus \ldots \oplus E_l$.

\item We set $\tilde{V}_{\mathrm{cl}} = V_{\mathrm{cl}}^{\mathrm{map}} \times V_{\mathrm{cl}}^{\mathrm{dom}}$, where $V_{\mathrm{cl}}^{\mathrm{map}}$ and $V_{\mathrm{cl}}^{\mathrm{dom}}$ are defined in a similar way as in the open case.  The only difference is that $\Dom(\bar{u})$ has exactly one unstable component, namely, $\Dom(\bar{u}_0)$ and we need to define $V^{\mathrm{map}}_{\mathrm{cl}}$ to be a quotient of the kernel of the linear map $[D_{\bar{u}} \bar{\partial}]$ by the space of infinitestimal automorphisms of this unstable component.

    Since we have chosen $E_{\mathrm{cl}} = 0 \oplus E_1 \oplus \ldots \oplus E_l$, $V_{\mathrm{cl}}^{\mathrm{map}}$ consists of first order deformations which is holomorphic when restricted to the component $\Dom(\bar{u}_0)$.  Restrictions of such deformations to $\Dom(u)$ to give elements in $V_{\mathrm{op}}^{\mathrm{map}}$.  Conversely, since $E_{\mathrm{op}} = 0 \oplus E_1 \oplus \ldots \oplus E_l$, the first order deformations in $V_{\mathrm{op}}^{\mathrm{map}}$ are holomorphic when restricted to the disc component.  Thus they can be extended to $\Dom(\bar{u})$ to give elements in $V_{\mathrm{cl}}^{\mathrm{map}}$.  This establishes an identification between $V_{\mathrm{cl}}^{\mathrm{map}}$ and $V_{\mathrm{op}}^{\mathrm{map}}$.  Also $V_{\mathrm{cl}}^{\mathrm{dom}} = V_{\mathrm{op}}^{\mathrm{dom}}$  which is the first-order deformation of the rational curve component $\mathcal{C}$.  Hence we have
    $$\tilde{V}_{\mathrm{op}} = \tilde{V}_{\mathrm{cl}}.$$

\item With the above identification, $\tilde{s}_{\mathrm{op}}$ is identified with a section $\tilde{s}_{\mathrm{cl}}: \tilde{V}_{\mathrm{cl}} \to E_{\mathrm{cl}}.$

\item Again, since we have chosen $E_{\mathrm{op}} = 0 \oplus E_1 \oplus \ldots \oplus E_l$, it follows from $\bar{\partial} \rho^{\mathrm{op}}_{\phi} = \tilde{s}_{\mathrm{op}}(\phi)$ that $\rho^{\mathrm{op}}_{\phi}|_{\Delta}$ is holomorphic.  Together with $ev_0 (\rho^{\mathrm{op}}_{\phi}) = p$, we see that $\rho^{\mathrm{op}}_{\phi}|_{\bD} = u_0$.  Thus $\rho^{\mathrm{op}}_{\phi}$ extends to give a family $\rho^{\mathrm{cl}}_{\phi}$ for $\phi \in \tilde{V}_{\mathrm{cl}}$ which satisfies $\bar{\partial} \rho^{\mathrm{cl}}_{\phi} = \tilde{s}_{\mathrm{cl}}(\phi)$.  Set
    $$V_{\mathrm{cl}} := \{\phi \in \tilde{V}_{\mathrm{cl}}: ev_0 (\rho^{\mathrm{cl}}_{\phi}) = p\},$$
    where $ev_0$ is the evaluation map at the domain smooth marked point.  Then define
    $$s_{\mathrm{cl}} := \tilde{s}_{\mathrm{cl}}|_{V_{\mathrm{cl}}}.$$

\item From the above construction, $s_{\mathrm{op}}^{-1} (0) / \Gamma_{\mathrm{op}}$ can be identified with $s_{\mathrm{cl}}^{-1} (0) / \Gamma_{\mathrm{cl}}$.  Then $\psi_{\mathrm{op}}$ can be identified as a map $\psi_{\mathrm{cl}}$ which maps $s_{\mathrm{cl}}^{-1} (0) / \Gamma_{\mathrm{cl}}$ onto a neighborhood of $[\bar{u}] \in \CM^{\mathrm{cl}}_{l+1}(\bar{\beta},\bar{\bx};p)$.
\end{enumerate}

In conclusion, a Kuranishi neighborhood of $[u]$ can be identified with a Kuranishi neighborhood of $[\bar{u}]$. As a result, the Kuranishi structure on $\CM^{\mathrm{op}}_{1,l}(\beta,\bx;p)$ and that on $\CM^{\mathrm{cl}}_{l+1} (\bar{\beta},\bar{\bx};p)$ are identical. This completes the proof of Theorem \ref{THM_open_closed}.
\end{proof}

For simplicity we have made stronger assumptions in Theorem \ref{THM_open_closed} than required. Indeed the above argument applies to more general situations described as follows:

\begin{theorem} \label{Thm_open_closed2}
Let $\mathcal{X}$ be a semi-Fano toric orbifold (possibly non-compact) and let $L$ be a Lagrangian torus fiber of $\mathcal{X}$.  Let $\beta = \beta_0 + d \in \pi_2(\mathcal{X},L)$ be represented by a stable holomorphic (orbi-)disc with one boundary marked point and $l$ interior orbifold marked points and passing through $l$ non-trivial twisted sectors $\mathcal{X}_{\nu_i}$ for $i = 1, \ldots, l$, where $\beta_0$ is a basic disc class and $d$ is represented by a rational orbi-curve with $c_1^{CW}(d) = 0$.  When $\beta_0$ is a basic smooth disc class, let $S$ be the toric divisor that it passes through.  When $\beta_0$ is a basic orbi-disc class, let $S$ be the support of the twisted sector that it passes through.

Assume that each component of an (orbifold) rational curve $\mathcal{C} \subset \bar{\mathcal{X}}$ representing $d$ has $c_1^{CW} = 0$; if $\mathcal{C}$ intersects $S$, then $\mathcal{C}$ is contained in $\bar{\mathcal{X}}\setminus D_{\infty}$.  Here, $\bar{\mathcal{X}}$ is the toric orbifold birational to $\mathcal{X}$ constructed in Definition \ref{Defn_barX}, and $D_{\infty}$ is the divisor corresponding to $\bb_\infty$ involved in the construction. $S \subset \mathcal{X}$ is identified as a subset of $\bar{\mathcal{X}}$ by the birational map between $\mathcal{X}$ and $\bar{\mathcal{X}}$. Then we have the equality between open and closed orbifold GW invariants
$$
n_{1,l,\beta}^\mathcal{X}([\mathrm{pt}]_{L};\mathbf{1}_{\nu_1},\ldots,\mathbf{1}_{\nu_l}) =
\langle [\mathrm{pt}]_{\bar{\mathcal{X}}}, \mathbf{1}_{\nu_1},\ldots,\mathbf{1}_{\nu_l} \rangle^{\bar{\mathcal{X}}}_{0,l+1,\bar{\beta}}
$$
where $\bar{\beta}=\beta_0+\beta_\infty+d\in H_2(\bar{\mathcal{X}};\Z)$, $[\mathrm{pt}]_{L} \in H^n (L;\rat)$ denotes the point class of $L$ and $[\mathrm{pt}]_{\bar{\mathcal{X}}} \in H^{2n}(\bar{\mathcal{X}};\rat)$ denotes the point class of $\bar{\mathcal{X}}$.
\end{theorem}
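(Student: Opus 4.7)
The plan is to follow the three-step scheme of the proof of Theorem \ref{THM_open_closed}, modified to accommodate non-constant rational orbi-curve bubbles of class $d$ with $c_1^{CW}(d)=0$. Let $\CM^{\mathrm{op}}_{1,l}(\beta,\bx;p)$ and $\CM^{\mathrm{cl}}_{l+1}(\bar\beta,\bar{\bx};p)$ denote the fiber products with $\{p\}$ over $L$ and $\bar{\mathcal{X}}$ respectively, as in the proof of Theorem \ref{THM_open_closed}. Using $c_1^{CW}(d)=0$, $\mu_{CW}(\beta_\infty)=2$, and hence $2c_1^{CW}(\bar\beta)=\mu_{CW}(\beta)+2$, both Kuranishi spaces have the same virtual dimension; it therefore suffices to identify them as Kuranishi spaces and invoke Lemma A1.43 of \cite{FOOO_I,FOOO_II} to obtain equality of virtual cycles.

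I would first prove a structure lemma analogous to Proposition \ref{Prp_disk_form}: every stable map $u\in\CM^{main}_{1,l}(L,\beta,\bx)$ decomposes as $\bD\cup\mathcal{C}$, where $\bD$ is a single (orbi-)disc component carrying $\beta_0$ and $\mathcal{C}$ is a connected rational orbi-curve of total class $d$ attached to $\bD$ at a single (orbi-)node lying in the stratum $S\subset\mathcal{X}$; this uses the semi-Fano condition together with the explicit assumption that each component of a rational orbi-curve representing $d$ has $c_1^{CW}=0$, so that no Chern-positive sphere bubble can arise and no component of the bubble can deform away. I would then set up the bijection $\Phi:\CM^{\mathrm{op}}_{1,l}(\beta,\bx;p)\to\CM^{\mathrm{cl}}_{l+1}(\bar\beta,\bar{\bx};p)$ sending $u_0\cup\mathcal{C}$ to $\bar u_0\cup\mathcal{C}$, where $\bar u_0$ is the unique rational orbi-curve in $\bar{\mathcal{X}}$ representing $\beta_0+\beta_\infty$ and passing through $p$; its uniqueness follows from the Fredholm regularity of the $\beta_0$-moduli and the maximum principle, exactly as in the proof of Theorem \ref{THM_open_closed}. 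The hypothesis that every rational orbi-curve representing $d$ and meeting $S$ is contained in $\bar{\mathcal{X}}\setminus D_\infty$ is precisely the input needed for surjectivity of $\Phi$: without it, $\bar{\mathcal{X}}$ could admit new representatives of $\bar\beta$ through $p$ via bubbles touching $D_\infty$ that do not correspond to anything in $\CM^{\mathrm{op}}$.

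The Kuranishi identification then runs essentially line-for-line as in the proof of Theorem \ref{THM_open_closed}. Since $u_0$ and $\bar u_0$ are Fredholm regular by Proposition \ref{prop:moduli}(3), I choose the obstruction bundle with $E_0=0$ on the distinguished (orbi-)disc/rational component, supporting all obstruction on the common bubble $\mathcal{C}$. The spaces of first-order map and domain deformations, together with $T^n$-equivariant transversal multisections, can be chosen compatibly on the two sides; the evaluation constraint at the distinguished marked point forces $\rho^{\mathrm{op}}_\phi|_\bD=u_0$ and $\rho^{\mathrm{cl}}_\phi|_{\Dom(\bar u_0)}=\bar u_0$, matching the two Kuranishi charts. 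The main obstacle is the bijection step: unlike the Gorenstein Fano case where the bubble is necessarily constant and the matching is almost tautological, here a positive-dimensional family of rational orbi-curve bubbles may contribute, and one must analyze the combinatorics of how such bubbles in $\bar{\mathcal{X}}$ lift back to $\mathcal{X}$ under the birational map $\bar{\mathcal{X}}\dashrightarrow\mathcal{X}$ that contracts $D_\infty$. The two explicit hypotheses on rational curves representing $d$ are tailored precisely to exclude spurious bubbles born from $D_\infty$ and to prevent Chern-positive components from deforming away, making $\Phi$ well defined; once this is in hand, the Kuranishi-theoretic matching transfers routinely from the original argument.
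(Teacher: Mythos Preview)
Your approach is essentially the same as the paper's, and the Kuranishi-matching portion transfers correctly from the proof of Theorem \ref{THM_open_closed}. However, there is one genuine gap in your structure lemma.

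You claim that every stable map in $\CM^{main}_{1,l}(L,\beta,\bx)$ decomposes as $\bD\cup\mathcal{C}$ with a \emph{single} (orbi-)disc component carrying exactly $\beta_0$, justified by ``the semi-Fano condition together with the explicit assumption that each component of a rational orbi-curve representing $d$ has $c_1^{CW}=0$.'' But semi-Fano only bounds $c_1^{CW}$ of \emph{sphere} components below by zero; it says nothing about ruling out the disc component itself splitting into several disc components. In the Gorenstein Fano case of Proposition \ref{Prp_disk_form} this was handled by a Maslov-index count ($\mu_{CW}\geq 2$ for every basic class forces a unique disc component when $\mu_{CW}(\beta)=2$), but here $\mathcal{X}$ need not be Gorenstein and the interior insertions need not have $\iota(\nu_i)=1$, so basic orbi-disc classes can have arbitrarily small positive Maslov index and the counting argument fails.

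The paper closes this gap with Lemma \ref{LEM_prim-basic}, which shows that basic disc classes are \emph{primitive}: any decomposition $\beta_0=\sum k_a\beta_{j_a}+\sum k_\nu\beta_\nu+\alpha$ either is the trivial one or forces extra orbifold marked points that were not present in the original domain (and hence cannot arise as a degeneration). This is the key new ingredient over Theorem \ref{THM_open_closed}, and once you invoke it your structure lemma goes through. The rest of your argument---the bijection $\Phi$, the role of the $D_\infty$-avoidance hypothesis in identifying rational curves in $\mathcal{X}$ with those in $\bar{\mathcal{X}}$, and the Kuranishi identification with $E_0=0$---matches the paper's proof.
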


Note that the cohomological degrees of the interior orbi-insertions $\mathbf{1}_{\nu_i}$ are {\em not} restricted to two here.

To prove Theorem \ref{Thm_open_closed2}, we first need to show that basic disc classes are {\em primitive} in a certain sense. Let us recall that basic disc classes consist of the following two types:
\begin{enumerate}
\item A disc class $\beta_j \in \pi_2(\mathcal{X},L)$ represented by a smooth holomorphic disc of Maslov index two corresponding to each toric divisor $D_j$.
\item A disc class $\beta_\nu \in \pi_2(\mathcal{X},L)$ represented by a holomorphic orbi-disc of desingularized Maslov index zero, with one interior orbifold marked point which maps to the twisted sector $\mathcal{X}_\nu$.
\end{enumerate}

Note that if $\beta\in\pi_2(\mathcal{X},L)$ is realized by a stable holomorphic (orbi-)disc, then we can write
$$\beta = \sum_{a=1}^p k_a \beta_{j_a} + \sum_{\nu \in\mathrm{Box}'} k_\nu \beta_\nu +\alpha,$$
for $k_a \in \N$, $k_\nu \in \integer_{\geq 0}$, $j_a\in \{1,\ldots, m\}$ and  $\alpha$ is an element in $H_2(\mathcal{X};\Z)$ realized by a positive sum of holomorphic (orbi-)spheres.
\begin{lemma} \label{LEM_prim-basic}
The basic disc classes are primitive, in the following sense:
\begin{itemize}
\item[(i)]
For $j \in \{1,\ldots, m\}$, suppose that $\beta_j = \sum_{a=1}^p k_a \beta_{j_a} + \sum_{\nu \in \mathrm{Box}'} k_\nu \beta_\nu +\alpha$ as above. Then one of the following alternatives holds:
\begin{enumerate}
\item At least one $k_\nu \geq 1$.
\item $k_\nu = 0$ for all $\nu \in \mathrm{Box}'$, $\alpha = 0$, $p=1$, $k_1=1$ and $j_1=j$.
\end{enumerate}

\item[(ii)]
For $\eta \in \mathrm{Box}'$, suppose that $\beta_\eta = \sum_{a=1}^p k_a \beta_{j_a} + \sum_{\nu \in \mathrm{Box}'} k_\nu \beta_\nu +\alpha$ as above. Then one of the following alternatives holds.
\begin{enumerate}
\item $\sum_\nu k_\nu \geq 2$.
\item $p = 0$, $\alpha=0$, and $k_\nu=1$ when $\nu=\eta$ and zero otherwise.
\end{enumerate}
\end{itemize}
\end{lemma}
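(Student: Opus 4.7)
The plan is to extract two kinds of constraint from any such decomposition and then carry out a short combinatorial matching. First, applying the boundary map $\partial:\pi_2(\mathcal{X},L)\to\pi_1(L)=N$, for which $\partial\beta_j=\bb_j$, $\partial\beta_\nu=\nu$, and $\partial\alpha=0$, converts the hypothesized identity into
\[
\bb_j=\sum_{a=1}^p k_a\bb_{j_a}+\sum_{\nu\in\mathrm{Box}'} k_\nu\nu \qquad \text{in case (i),}
\]
and analogously $\eta=\sum_a k_a\bb_{j_a}+\sum_\nu k_\nu\nu$ in case (ii). Second, evaluating symplectic areas at a Lagrangian torus fiber $L(u)$ with $u\in\mathrm{int}(P)$ using $\int_{\beta_a}\omega=2\pi\ell_a(u)$ for each basic class (smooth or orbi), together with $\int_\alpha\omega\geq 0$ and equality iff $\alpha=0$ (since $\alpha$ is a positive sum of holomorphic (orbi-)spheres), yields
\[
\ell_j(u)=\sum_a k_a\ell_{j_a}(u)+\sum_\nu k_\nu\ell_\nu(u)+\frac{1}{2\pi}\int_\alpha\omega,
\]
and the same identity with $\ell_\eta(u)$ on the left in case (ii).

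Since each $\ell_a$ is non-negative on $P$, I then specialize $u$ to the facet $F_j$ in case (i), and to the face $F_\sigma$ dual to the cone $\sigma\in\Sigma$ in whose relative interior $\eta$ lies in case (ii). The left-hand side vanishes, so every non-negative summand on the right must vanish: this immediately gives $\alpha=0$. Moreover, the vanishing of $\ell_{j_a}$ on $F_j$ (resp.\ on $F_\sigma$) forces $\bb_{j_a}=\bb_j$ in case (i), and more generally forces $\bb_{j_a}$ to be a ray of $\sigma$ in case (ii). Writing any contributing $\nu$ in its unique form $\nu=\sum_k t_k\bb_{i_k}$ with $t_k\in(0,1)\cap\rat$ over the rays of its supporting cone, the identity $\ell_\nu=\sum_k t_k\ell_{i_k}$ forces in the same way each $\bb_{i_k}$ to lie in the chosen face, so every contributing $\nu$ is supported on a face of $\sigma$.

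In case (i) with all $k_\nu=0$ the decomposition collapses to $\beta_j=k\beta_j$ with $k=\sum_a k_a\geq 1$; applying $\partial$ once more yields $(1-k)\bb_j=0$, hence $k=1$, which is alternative (2). In case (ii) with $\sum_\nu k_\nu\leq 1$, I use that $\sigma$ is simplicial, so the rays $\{\bb_{i_k}\}$ of $\sigma$ form a linearly independent set and $\eta$ has a unique expansion $\eta=\sum_k t_k\bb_{i_k}$ with $t_k\in(0,1)\cap\rat$. If $\sum_\nu k_\nu=0$, then $\eta\in\sum_k\Z_{\geq 0}\bb_{i_k}$, which contradicts $t_k\in(0,1)$. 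If $\sum_\nu k_\nu=1$, say $k_{\nu_0}=1$ with $\nu_0=\sum_k s_k\bb_{i_k}$ and $s_k\in[0,1)\cap\rat$, then matching coefficients gives $t_k=m_k+s_k$ with $m_k\in\Z_{\geq 0}$; the bound $t_k<1$ forces $m_k=0$, so $\nu_0=\eta$ and $p=0$, which is alternative (2). The main obstacle is the final coefficient matching: one must first leverage the area-vanishing argument carefully to confirm that $\nu_0$ is supported on a face of $\sigma$, so that both $\eta$ and $\nu_0$ admit unique expansions in the common linearly independent basis $\{\bb_{i_k}\}$, and only then balance the integrality of the $m_k$ against the fractional constraints on $t_k$ and $s_k$.
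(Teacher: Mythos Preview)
Your proof is correct and follows essentially the same strategy as the paper: evaluate symplectic areas at a point of the moment polytope where the left-hand side vanishes, use non-negativity of all summands to force $\alpha=0$ and pin down which basic classes can appear, and then finish with a combinatorial argument inside the simplicial cone. The only differences are expository: you invoke the boundary map $\partial$ explicitly to conclude $k=1$ in case~(i) and spell out the coefficient-matching in the simplicial basis for case~(ii), whereas the paper compresses these steps into a single sentence (``this forces $k_a=0$ for all $a$\ldots'').
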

\begin{proof}
For toric manifolds, a similar statement has been proved in \cite[Theorem 10.1]{FOOO1}.

Let us first consider the case of $\beta_j$ for $j \in \{1,\ldots, m\}$. We will assume that $k_\nu = 0$ for all $\nu \in \mathrm{Box}'$ and show that $\alpha =0$, $p=1$, $k_1=1$ and $j_1=j$. Since $k_\nu = 0$ for all $\nu \in \mathrm{Box}'$, one has
\begin{equation} \label{sum:a}
\beta_j =\sum_{a=1}^p k_a \beta_{j_a}+ \alpha.
\end{equation}
By considering the symplectic areas on both sides, we have
$$\ell_j = \sum_a k_a \ell_{j_a} + c,$$
where $c$ is the symplectic area of $\alpha$. Since $\alpha$ is represented by a positive sum of holomorphic (orbi-)spheres, we have $c \geq 0$ and equality holds if and only if $\alpha = 0$. On the other hand, take $u \in P$ in the interior of the $j$-th facet $F_j\subset P$ so that $\ell_j(u)=0$, and $\ell_i(u) >0$ for $i \neq j$. Hence, we must have $c \leq 0$. So $c=0$, $a=1$, $k_1=1$ and $j_1=j$. This proves (i).

To prove (ii), consider $\beta_\eta$ for some $\eta \in \mathrm{Box}'$, and assume that $\sum_\nu k_\nu < 2$. Again by taking the symplectic areas, one has
$$ \ell_\eta = \sum_{a=1}^p k_a \ell_{j_a} + \sum_{\nu \in \mathrm{Box}'} k_\nu \ell_\nu + c $$
where $c \geq 0$ is the symplectic area of $\alpha$. Take $u \in P$ such that $\ell_\eta(u) = 0$. Since every term on the right hand side is non-negative, we must have $\ell_{j_a}(u) = \ell_\nu(u) = c = 0$ for all $a$ and $\nu$. In particular, $\alpha = 0$. Also, this implies that $\eta, \bb_{j_a}, \nu$ for $k_\nu \neq 0$ belong to the same cone of the fan $\Sigma(P)$. But
$$\eta = \sum_{a=1}^p k_a \bb_{j_a} + \sum_{\nu \in \mathrm{Box}'} k_\nu \nu \in \mathrm{Box}.$$
This forces $k_a = 0$ for all $a$ and $k_\nu$ cannot be all zero. The only remaining possibility is $k_\nu = 1$ only when $\nu = \eta$ and zero otherwise.  This finishes the proof of the lemma.
%
%
%
%
%
\end{proof}

This lemma implies that the basic holomorphic (orbi-)discs cannot degenerate into sums of other basic discs, since the number of interior orbifold marked points cannot increase when we consider possible degenerations of an orbifold curve (by the definition of the topology of the domain curve).

We are now in a position to prove Theorem \ref{Thm_open_closed2}:
\begin{proof}[Proof of Theorem \ref{Thm_open_closed2}]
First of all, the semi-Fano condition implies that every rational (orbi-)curve has $c_1^{CW} \geq 0$. Since a sphere which intersects with $\mathcal{X}^\circ\cong(\C^*)^n \subset \mathcal{X}$ must have positive $c_1^{CW}>0$, those with $c_1^{CW}=0$ are contained in the toric divisors.  Moreover the basic disc class $\beta_0$ is primitive by Lemma \ref{LEM_prim-basic}.  Thus the domain of a stable disc $u$ representing $\beta_0 + d$ must be of the form $\bD$ with a rational (orbi-)curve $\mathcal{C}$ attached, where $u_0:=u|_\bD$ is a holomorphic (orbi-)disc representing $\beta_0$ and $u|_\mathcal{C}$ represents $d$ which has $c_1^{CW}(d)=0$.  The (orbi-)nodal point where $u_0$ is attached with $u|_\mathcal{C}$ lies in $S$, and so $\mathcal{C}$ must pass through $S$.  By the assumption, such rational (orbi-)curves in $\mathcal{X}$ are in one-to-one correspondence with those in $\bar{\mathcal{X}}$. As a result, we can apply the construction in Definition \ref{Defn_barX} and extend the arguments in the proof of Theorem \ref{THM_open_closed} to the current situation to show that
$$\CM^{\mathrm{op}}_{1,l}(\beta,\bx;p) = \CM^{\mathrm{cl}}_{l+1}(\bar{\beta},\bar{\bx};p)$$
as spaces with Kuranishi structures. Hence we obtain the desired equality.
\end{proof}

Notice that starting with a toric manifold $X$, in order to apply the open-closed relation discussed in this section, unavoidably one has to work with toric orbifolds since in general $\bar{X}$ is an orbifold (see Definition \ref{Defn_barX}).  In the manifold case, Theorem \ref{Thm_open_closed2} says the following:

\begin{corollary}
Let $X$ be a semi-Fano toric manifold (possibly non-compact) and $L$ a Lagrangian torus fiber of $X$.  Let $\beta = \beta_0 + d \in \pi_2(X,L)$, where $\beta_0$ is a basic disc class and $d$ is represented by a rational curve with $c_1(d) = 0$.

Assume that each component of a rational curve $C \subset \bar{X}$ representing $d$ has $c_1^{CW} = 0$, and if $C$ intersects $D_0$, then $C$ is contained in $\bar{X}\setminus D_{\infty}$. (Here, $D_0$ is the toric divisor that $\beta_0$ intersects, $\bar{X}$ is the toric orbifold constructed in Definition \ref{Defn_barX} which is birational to $X$, and $D_{\infty}$ is the divisor corresponding to $\bb_\infty$ involved in the construction.)
Then we have the equality
$$ n_{1,0,\beta}^X ([\mathrm{pt}]_{L}) = \langle [\mathrm{pt}]_{\bar{X}}\rangle^{\bar{X}}_{0,1,\bar{\beta}}$$
where $[\mathrm{pt}]_{L} \in H^n (L;\rat)$ denotes the point class of $L$ and $[\mathrm{pt}]_{\bar{X}} \in H^{2n}(\bar{X};\rat)$ denotes the point class of $\bar{X}$.
\end{corollary}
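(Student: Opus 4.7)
The plan is to derive this corollary as an immediate specialization of Theorem \ref{Thm_open_closed2} to the case where $X$ is a smooth toric manifold and there are no interior orbifold marked points (i.e.\ $l=0$). All the analytic work---the one-to-one correspondence of moduli spaces, the identification of Kuranishi neighborhoods on the open and closed sides, and the identification of obstruction bundles---has already been carried out at the orbifold level of generality, so what remains is bookkeeping: matching the data of the corollary to the hypotheses of the theorem.

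First I would observe that when $X$ is a smooth toric manifold, the basic disc classes in $\pi_2(X,L)$ are exactly the smooth classes $\beta_1,\ldots,\beta_m$ associated to the primitive ray generators, since there are no non-trivial twisted sectors in $\mathrm{Box}'$. Hence the basic class $\beta_0$ in the corollary must equal some $\beta_j$, and the stratum called $S$ in Theorem \ref{Thm_open_closed2} is precisely the toric divisor $D_j$, which is the $D_0$ appearing in the corollary. The construction of $\bar{X}$ and of the exceptional divisor $D_\infty$ from Definition \ref{Defn_barX} then applies verbatim, keeping in mind (as emphasised in the paragraph preceding the corollary) that $\bar{X}$ can genuinely be an orbifold because the lattice vector $\mathbf{b}_\infty=-\partial\beta_0$ need not be primitive in $N$.

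Next, I would set $l=0$ in Theorem \ref{Thm_open_closed2}: the tuple $\mathbf{x}$ is empty, the insertions $\mathbf{1}_{\nu_i}$ disappear, and the theorem's conclusion reduces to
\[
n_{1,0,\beta}^{X}([\mathrm{pt}]_L) \;=\; \langle [\mathrm{pt}]_{\bar{X}}\rangle^{\bar{X}}_{0,1,\bar{\beta}},
\]
which is exactly the desired equality with $\bar{\beta}=\beta_0+\beta_\infty+d$. The two geometric hypotheses of Theorem \ref{Thm_open_closed2} on rational curves $C\subset\bar{X}$ representing $d$---that each component has $c_1^{CW}=0$, and that $C$ is disjoint from $D_\infty$ whenever it meets $S$---are copied verbatim into the statement of the corollary, so they hold by assumption. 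The semi-Fano hypothesis on $X$ is used only through these two curve-theoretic conditions, so no additional verification on $\bar{X}$ is needed.

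I do not anticipate any genuine obstacle; the corollary is a clean repackaging of Theorem \ref{Thm_open_closed2}. The only point that might look delicate is that one cannot remain within toric manifolds on the closed side: even though $X$ is smooth, the closed Gromov--Witten invariant on the right-hand side generally lives on the toric \emph{orbifold} $\bar{X}$, so invoking the orbifold version of closed Gromov--Witten theory is essential---which is precisely why Theorem \ref{Thm_open_closed2} was formulated in the orbifold generality to begin with.
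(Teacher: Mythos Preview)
Your proposal is correct and matches the paper's approach exactly: the corollary is stated immediately after Theorem \ref{Thm_open_closed2} as the specialization to a smooth toric manifold with $l=0$, and the paper gives no separate proof. Your identification of $S$ with $D_0$, the observation that basic disc classes are all smooth when $X$ is a manifold, and the remark that $\bar{X}$ may still be an orbifold are precisely the points the paper highlights in the sentence introducing the corollary.
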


\section{Example: $\mathcal{X}=\proj(1,\ldots,1,n)$ and $Y=\proj(K_{\proj^n}\oplus\mathcal{O}_{\proj^n})$}\label{examples}

In this section, we prove the open crepant resolution conjecture (Conjecture \ref{open_CRC}) for the weighted projective space $\mathcal{X}=\proj(1,\ldots,1,n)$ which is Gorenstein and Fano, and whose crepant resolution is given by the semi-Fano toric manifold $Y=\proj(K_{\proj^{n-1}}\oplus\mathcal{O}_{\proj^{n-1}})$.

\subsection{Computation of open orbifold GW invariants}

The weighted projective space $\mathcal{X}=\proj(1,\ldots,1,n)$ is a toric orbifold described by the simplicial fan $\Sigma$ whose generators of rays are given by
\begin{align*}
\bb_1 = & (1,0,\ldots,0,0), \bb_2 = (0,1,\ldots,0,0), \ldots, \bb_{n-1} = (0,0,\ldots,0,1,0),\\
\bb_n = & (-1,-1,\ldots,-1,n), \bb_{n+1}=(0,0,\ldots,0,-1) \in N=\Z^n.
\end{align*}
There is a unique isolated orbifold point with $\Z_n$-singularity which corresponds to the cone generated by $\bb_1,\bb_2,\ldots,\bb_n$. The twisted sectors of $\mathcal{X}$ are hence given by the trivial one $\mathcal{X}_0 = \mathcal{X}$ together with the non-trivial ones $\mathcal{X}_{k/n}$ corresponding to
$$\nu_{k/n}:=\frac{k}{n}\sum_{j=1}^n \bb_j=(0,0,\ldots,0,k)\in N$$
for $k = 1, \ldots, n-1$, which are all supported at the isolated orbifold point in $\mathcal{X}$. The degree shifting number of $\mathcal{X}_{k/n}$ is given by
$$\iota_{k/n} := \iota(\nu_{k/n}) = k.$$
The weighted projective space $\mathcal{X}$ is Gorenstein (as $\iota_{k/n}$ is an integer for all $k$) and Fano.

Let $L\subset\mathcal{X}$ be a Lagrangian torus fiber. By Cho-Poddar's classification of holomorphic orbi-discs \cite{CP} (see Theorem \ref{classification_orbidiscs}), there is a basic orbi-disc class $\beta_{1/n}\in \pi_2(\mathcal{X},L)$ with $\mu_{CW}(\beta_{1/n}) = 2$ which is represented by a holomorphic orbi-disc with one boundary marked point and one interior orbifold marked point passing through the twisted sector $\mathcal{X}_{1/n}$. Note that $\mathcal{X}_{1/n}$ is the only non-trivial twisted sector with degree shifting number equal to one. Let $\mathbf{1}_{1/n}\in H^0(\mathcal{X}_{1/n}) \subset H^2_\mathrm{orb}(\mathcal{X})$ be the fundamental class of $\mathcal{X}_{1/n}$. We are interested in computing the open orbifold GW invariants $n_{1,l,\beta_{1/n}}^\mathcal{X}([\mathrm{pt}]_{L};\mathbf{1}_{1/n},\ldots,\mathbf{1}_{1/n})$.

By applying the construction in Definition \ref{Defn_barX} with $\beta=\beta_{1/n}$, we have $\bar{\mathcal{X}}=\mathcal{X}=\proj(1,\ldots,1,n)$ since $\bb_\infty=-\nu_{1/n}=\bb_{n+1}$, and $\bar{\beta}=\beta_{1/n}+\beta_{n+1}$ where $\beta_{n+1}\in\pi_2(\mathcal{X},L)$ is the smooth basic disc class corresponding to $\bb_{n+1}$. Now Theorem \ref{THM_open_closed} gives the equality
$$n_{1,l,\beta}^\mathcal{X}([\mathrm{pt}]_{L};\mathbf{1}_{1/n},\ldots,\mathbf{1}_{1/n}) = \langle[\mathrm{pt}]_{\mathcal{X}},\mathbf{1}_{1/n},\ldots,\mathbf{1}_{1/n}\rangle^{\mathcal{X}}_{0, l+1,\bar{\beta}}.$$

To compute the GW invariant $\langle[\mathrm{pt}]_{\mathcal{X}},\mathbf{1}_{1/n},\ldots,\mathbf{1}_{1/n}\rangle^{\mathcal{X}}_{0,l+1,\bar{\beta}}$, we will use the method developed in \cite{CLT11} adapted to the orbifold setting. Roughly speaking, this goes as follows. The invariants we need are encoded as a certain coefficient of the small $J$-function of $\mathcal{X}$. By applying the mirror theorem for orbifolds (i.e. Theorem \ref{closed_mirror_thm}( \cite{CCIT13}, \cite[Conjecture 4.3]{iritani09}), we can then compute the relevant coefficient and hence the invariants using the explicit and combinatorially defined $I$-function.

Recall that the small $J$-function of $\mathcal{X}$ is given by
\begin{align*}
J^\mathcal{X}(q,z) = & \conste^{\tau_{0,2}/z}\left(1 + \sum_\alpha \sum_{\stackrel{(d,l)\neq(0,0)}{d\in H_2^\mathrm{eff}(\mathcal{X})}} \frac{q^d}{l!} \langle 1,\tau_\mathrm{tw}, \ldots, \tau_\mathrm{tw}, \frac{\phi_\alpha}{z-\psi} \rangle^\mathcal{X}_{0,l+2,d} \phi^\alpha \right),
\end{align*}
where $\log q=\tau = \tau_{0,2} + \tau_\mathrm{tw} \in H^2_\mathrm{orb}(\mathcal{X})$ with $\tau_{0,2}=\tau_1\bar{p}_1 \in H^2(\mathcal{X})$ and $\tau_\mathrm{tw} = \tau_2\mathbf{1}_{1/n} \in H^0(\mathcal{X}_{1/n})$, and $q^d = \conste^{\langle\tau_{0,2},d\rangle}=q_1^{\langle \bar{p}_1,d\rangle}$.

The $H^0$-part of the coefficient of $1/z^2$ of $J^\mathcal{X}(q,z)$ is given by
\begin{align*}
\sum_{\stackrel{(d,l)\neq(0,0)}{d\in H_2^\mathrm{eff}(\mathcal{X})}}
\frac{q^d}{l!}\langle[\mathrm{pt}]_{\mathcal{X}},\tau_\mathrm{tw},\ldots,\tau_\mathrm{tw}\rangle^\mathcal{X}_{0,l+1,d}
= & q^{\bar{\beta}}\sum_{l \geq 0}\frac{1}{l!}\langle [\mathrm{pt}]_{\mathcal{X}},\tau_\mathrm{tw},\ldots,\tau_\mathrm{tw}\rangle^\mathcal{X}_{0,l+1,\bar{\beta}}\\
= & q^{\bar{\beta}}\sum_{l \geq 0}\frac{\tau_2^l}{l!}\langle [\mathrm{pt}]_{\mathcal{X}},\mathbf{1}_{1/n},\ldots,\mathbf{1}_{1/n}\rangle^\mathcal{X}_{0,l+1,\bar{\beta}}
\end{align*}
where the last equality follows because $\langle [\mathrm{pt}]_{\mathcal{X}}, \tau_\mathrm{tw}, \ldots, \tau_\mathrm{tw} \rangle_{0,l+1,d} \neq 0$ only when $c_1^{CW}(d)=2$ for dimension reasons, and $d = \bar{\beta}$ is the only curve class satisfying $c_1^{CW}(d) = 2$.  Thus the invariants $\langle [\mathrm{pt}]_{\mathcal{X}},\mathbf{1}_{1/n},\ldots,\mathbf{1}_{1/n}\rangle^\mathcal{X}_{0,l+1,\bar{\beta}}$ are contained in the $H^0$-part of the coefficient of $1/z^2$ of the small $J$-function.

The toric mirror theorem allows one to compute the $J$-function from the combinatorial data which defines $\mathcal{X}$ as follows. The extended stacky fan \cite{jiang08} of $\mathcal{X}$ can be defined by the exact sequence
$$ 0 \to \mathbb{L} \to \Z^{n+2} \to N \to 0 $$
where the homomorphism $\Z^{n+2} \to N$ is given by sending the standard basic vector $e_i$ to $\bb_i$ for $i=1,\ldots,n+1$ and $e_{n+2}$ to $\bb_{n+2}:=\nu_{1/n} = (0,\ldots,0,1)$. One has
$$
(\bb_1 \,\, \ldots \,\, \bb_{n-1} \,\, \bb_n \,\, \bb_{n+1} \,\, \bb_{n+2})
\left( \begin{array}{cc}
1 & 0 \\
\vdots & \vdots \\
1 & 0 \\
n & 1 \\
0 & 1
\end{array}
\right) = 0
$$
which defines the inclusion of the kernel $\mathbb{L}\cong\Z^2 \to \Z^{n+2}$. Let
$$d_1=\sum_{j=1}^n e_j+ne_{n+1},\ d_2=e_{n+1}+e_{n+2}$$
be a basis of $\mathbb{L}$. Then $H_2(\mathcal{X};\rat)$ is the subspace $\rat d_1\subset \mathbb{L}\otimes\rat$ and $\bar{\beta}=d_1/n$. Let
$$D_1=\ldots=D_n=(1,0),\ D_{n+1}=(n,1)\textrm{ and }D_{n+2}=(0,1)\in\mathbb{L}^\vee$$
denote the row vectors in the above matrix. Then $H^2(\mathcal{X};\rat)$ is the quotient $\mathbb{L}^\vee\otimes\rat/\rat D_{n+2}$. For $j=1,\ldots,n+1$, the image of $D_j$ in $H^2(\mathcal{X};\rat)$ is the Poincar\'e dual of the corresponding toric divisor; while the image of $D_{n+2}$ in  $H^2(\mathcal{X};\rat)$ is zero.

The secondary fan is supported in $\mathbb{L}^\vee_\R\cong\R^2$, and its rays are generated by $D_1=\ldots=D_n$, $D_{n+1}$ and $D_{n+2}$.  The secondary fan parametrizes stability conditions of the GIT quotients of $\C^{n+2}$ by $(\C^*)^2$ whose action is defined by the above exact sequence.  It consists of two cones, namely $\langle D_1,D_{n+1}\rangle_{\R_{\geq 0}}$ and $\langle D_{n+1},D_{n+2}\rangle_{\R_{\geq 0}}$.  When we choose the stability condition $\eta\in\langle D_{n+1},D_{n+2}\rangle_{\R_{>0}}$, the GIT quotient we obtain is the orbifold $\mathcal{X}$. When we choose the stability condition $\eta\in\langle D_1,D_{n+1}\rangle_{\R_{>0}}$, the GIT quotient we obtain is the crepant resolution $Y=\proj(K_{\proj^{n-1}}\oplus\mathcal{O}_{\proj^{n-1}})$.

The cone $\mathbb{K}_\mathrm{eff}$ is given by the subset
\begin{align*}
\mathbb{K}_\mathrm{eff}  = \left\{ \frac{a-b}{n}d_1+bd_2 \in\mathbb{L}\otimes\rat: a, b \in \Z_{\geq 0} \right\}.
\end{align*}
For $d=\frac{a-b}{n} d_1+b d_2\in \mathbb{K}_\mathrm{eff}$, $\nu(d)=\{\frac{b-a}{n}\}\sum_{j=1}^n\bb_j=\{\frac{b-a}{n}\}(0,\ldots,0,n)\in\mathrm{Box}$. So $\nu(d)=0$ if and only if $a \equiv b$ (mod $n$). Recall that the $I$-function (which takes values in $H^*_\mathrm{orb}(\mathcal{X})$) is defined as
\begin{align*}
I_\mathcal{X}(y,z)= & \conste^{\bar{p}_1\log y_1/z}\\
& \quad\left(\sum_{d\in\mathbb{K}_\mathrm{eff}}y^d\frac{\prod_{j:\langle D_j,d\rangle<0}\prod_{k\in[\langle D_j,d\rangle,0)\cap\Z}(\bar{D}_j+(\langle D_j,d\rangle-k)z)}{\prod_{j:\langle D_j,d\rangle>0}\prod_{k\in[0,\langle D_j,d\rangle)\cap\Z}(\bar{D}_j+(\langle D_j,d\rangle-k)z)}\mathbf{1}_{\nu(d)}\right),
\end{align*}
where $y^d=y_1^{\langle p_1,d\rangle}y_2^{\langle p_2,d\rangle}=y_1^{\frac{a-b}{n}}y_2^b$ if $d=\frac{a-b}{n} d_1+b d_2$ and $\mathbf{1}_{\nu(d)}\in H^0(\mathcal{X}_{\nu(d)})\subset H^{2\iota(\nu(d))}_\mathrm{orb}(\mathcal{X})$ is the fundamental class of the twisted sector $\mathcal{X}_{\nu(d)}$.

Let us compute the $H^0$-part of the coefficient of $1/z^2$ of $I_\mathcal{X}(y,z)$. Let $d=\frac{a-b}{n} d_1+b d_2\in \mathbb{K}_\mathrm{eff}$. For the coefficient of $y^d$ to have an image in $H^0(\mathcal{X})$, we need to have $\mathbf{1}_{\nu(d)}\in H^0_\mathrm{orb}(\mathcal{X})$ which is true if and only if $a \equiv b$ (mod $n$), and also $\langle D_j, d \rangle \not\in \Z_{<0}$ for all $j \neq n+2$ since otherwise the numerator of that term is a multiple of the Poincar\'e dual of $D_i$ which cannot lie in $H^0(\mathcal{X})$ (note that the image of $D_{n+2}$ in $H^2(\mathcal{X})$ is zero). This implies that
$$a-b\geq 0, a\geq 0.$$
In this case, the exponent of $z$ in the expression of the $I$-function is given by
$$-\sum_{j=1}^{n+2}\lceil\langle D_j,d\rangle\rceil=-\left(n\lceil\frac{a-b}{n}\rceil+a+b\right)=-\left(n\frac{a-b}{n}+a+b\right)=-2a,$$
which contributes to $1/z^2$ only when $a=1$. This in turn implies $b=1$. Hence the $H^0$-part of the coefficient of $1/z^2$ of $I_\mathcal{X}(y,z)$ is given by $y_2=y^{d_2}$.

The toric mirror theorem states that
\begin{equation*}
J_\mathcal{X}(q,z)=I_\mathcal{X}(y(q),z),
\end{equation*}
where $y=y(q)$ is the inverse of the mirror map $q=q(y)$. In particular, the $H^0$-parts of their $1/z^2$-coefficient are equal. Thus
$$y_2 = q_1^{1/n}\sum_{l \geq 0}\frac{\tau_2^l}{l!}\langle [\mathrm{pt}]_{\mathcal{X}},\mathbf{1}_{1/n},\ldots,\mathbf{1}_{1/n}\rangle^\mathcal{X}_{0,l+1,\bar{\beta}},$$
since we have $q^{\bar{\beta}} = \conste^{\langle\tau_{0,2},d_1/n\rangle}= q_1^{\langle \bar{p}_1,d_1/n\rangle} = q_1^{1/n}$. Note that this implies that the Lagrangian Floer superpotential $W^{LF}_\mathcal{X}$ is convergent.

Let us also compute the mirror map, which is given by the $H^2_\mathrm{orb}$-part of the $1/z$-coefficient of the $I$-function. Let $d=\frac{a-b}{n} d_1+b d_2\in \mathbb{K}_\mathrm{eff}$. The coefficient of $y^d$ to contributes to $H^2_\mathrm{orb}$, either when $\nu(d)=0$ or $\nu(d)=\nu_{1/n}$. But $\nu(d)=0$ if and only if $a\equiv b$ (mod $n$) in which case the exponent of $z$ is $-2a$, so this will not be part of the mirror map. Thus we must have $\nu(d)=\nu_{1/n}$ which is the case if and only if $b-a\equiv 1$ (mod $n$). Write $b=a+kn+1$ for $k\in\Z$. Then the exponent of $z$ is given by
$$-\sum_{j=1}^{n+2}\lceil\langle D_j,d\rangle\rceil=-\left(n\lceil\frac{a-b}{n}\rceil+a+b\right)=n(-k)+a+(a+kn+1)=2a+1,$$
which contributes to $1/z$ only when $a=0$. So $b=kn+1$ for $k\in\Z_{\geq0}$. Hence the mirror map is given by
$$\tau(y_1,y_2) = p_1\log y_1 + \left(\sum_{k=0}^\infty \frac{((-\frac{1}{n})(-\frac{1}{n}-1)\cdots(-\frac{1}{n}-(k-1)))^n}{(kn+1)!}y^{d_k}\right)\mathbf{1}_{1/n},$$
where $d_k=(kn+1)(d_2-d_1/n)=-(k+1/n)d_1+(kn+1)d_2$. We can also write
\begin{align*}
\tau_1 & = \log y_1,\\
\tau_2 & = g(y_1^{-1/n}y_2),
\end{align*}
where $g=g(z)$ is the function
$$g(z):=\sum_{k=0}^\infty \frac{((-\frac{1}{n})(-\frac{1}{n}-1)\cdots(-\frac{1}{n}-(k-1)))^n}{(kn+1)!}z^{kn+1}.$$
We remark that $g$ is a solution to some Picard-Fuchs equation.

Using these calculations, we can now prove the open toric mirror theorem for $\mathcal{X}$:
\begin{theorem}\label{open_mirror_thm_wPn}
For $\mathcal{X}=\proj(1,\ldots,1,n)$, we have
\begin{equation*}
W^{LF}_\mathcal{X}(q)=W^{HV}_\mathcal{X}(y(q)),
\end{equation*}
where $y=y(q)$ is the inverse mirror map.
\end{theorem}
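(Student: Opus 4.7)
The plan is to match the two superpotentials term by term, using the open-closed correspondence (Theorem \ref{THM_open_closed}) to convert the open orbifold GW invariants appearing in $W^{LF}_\mathcal{X}$ into closed orbifold GW invariants on $\mathcal{X}$, and then invoking the closed toric mirror theorem (Theorem \ref{closed_mirror_thm}) to read off those closed invariants from the $I$-function computation performed earlier in this section.

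First I would unfold $W^{LF}_\mathcal{X}$. Since $\mathcal{X}=\proj(1,\ldots,1,n)$ is Gorenstein and Fano, and the unique twisted sector with $\iota \leq 1$ is $\mathcal{X}_{1/n}$ (with $\iota=1$), Proposition \ref{Prp_disk_form} together with Corollary \ref{openGW_basic} yields
\[
W^{LF}_\mathcal{X}(q) = \sum_{j=1}^{n+1} C_j\, z^{\bb_j} + F(\tau_2)\, C_{n+2}\, z^{\bb_{n+2}},
\]
where
\[
F(\tau_2) := \sum_{l \geq 1} \frac{\tau_2^l}{l!}\, n^{\mathcal{X}}_{1,l,\beta_{1/n}}\bigl([\mathrm{pt}]_L;\mathbf{1}_{1/n},\ldots,\mathbf{1}_{1/n}\bigr),
\]
and the coefficients $C_j$ are subject to $q_1 = C_1 \cdots C_n C_{n+1}^n$ and $q_1^{1/n} = C_{n+1} C_{n+2}$.

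Next, the construction of Definition \ref{Defn_barX} applied to $\beta = \beta_{1/n}$ gives $\bb_\infty = -\nu_{1/n} = \bb_{n+1}$, so $\bar{\mathcal{X}} = \mathcal{X}$ and $\bar\beta = \beta_{1/n} + \beta_{n+1}$. Theorem \ref{THM_open_closed} then gives
\[
n^{\mathcal{X}}_{1,l,\beta_{1/n}}\bigl([\mathrm{pt}]_L;\mathbf{1}_{1/n},\ldots,\mathbf{1}_{1/n}\bigr) = \bigl\langle [\mathrm{pt}]_{\mathcal{X}},\mathbf{1}_{1/n},\ldots,\mathbf{1}_{1/n}\bigr\rangle^{\mathcal{X}}_{0,l+1,\bar\beta}
\]
for every $l \geq 1$. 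Combined with the computation preceding the theorem, which shows that the $H^0$-coefficient of $1/z^2$ in $I_\mathcal{X}(y,z)$ equals $y_2$ and that in $J_\mathcal{X}(q,z)$ equals $q_1^{1/n}\sum_{l \geq 0}\tfrac{\tau_2^l}{l!}\langle [\mathrm{pt}]_{\mathcal{X}},\mathbf{1}_{1/n},\ldots,\mathbf{1}_{1/n}\rangle^{\mathcal{X}}_{0,l+1,\bar\beta}$, the closed mirror theorem (Theorem \ref{closed_mirror_thm}) yields $y_2 = q_1^{1/n} F(\tau_2)$. The $l=0$ term drops out because $\CM_{1,0}(L,\beta_{1/n})$ is empty (any orbi-sphere bubble carrying the single orbifold point of the disc would have only one special point, violating stability), and by the Kuranishi isomorphism in the proof of Theorem \ref{THM_open_closed} the closed analog $\langle [\mathrm{pt}]_{\mathcal{X}}\rangle^{\mathcal{X}}_{0,1,\bar\beta}$ vanishes as well.

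Finally, the mirror map computation in the discussion preceding the theorem gives $y_1 = q_1$, and the extended Hori-Vafa superpotential $W^{HV}_\mathcal{X}(y) = \sum_{j=1}^{n+2} C_j z^{\bb_j}$ is subject to the constraints $y_1 = C_1 \cdots C_n C_{n+1}^n$ and $y_2 = C_{n+1} C_{n+2}$. Using the allowed change of coordinates on $M_{\C^*}$ (e.g.\ fixing $C_1 = \cdots = C_n = 1$), both superpotentials are determined by $C_{n+1}$ and $C_{n+2}$ satisfying the same constraints with $y_1 = q_1$ and $y_2 = q_1^{1/n} F(\tau_2)$, so $W^{LF}_\mathcal{X}(q) = W^{HV}_\mathcal{X}(y(q))$. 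The main obstacle is the bookkeeping of the fractional exponents $q_1^{1/n}$ arising from the orbifold structure at the large volume limit and verifying the vanishing of the $l=0$ term of the generating series, so that $y_2/q_1^{1/n}$ matches $F(\tau_2)$, which by definition begins at $l=1$; both issues are resolved once the open-closed dictionary of Theorem \ref{THM_open_closed} is in place.
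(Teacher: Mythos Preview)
Your proof is correct and follows essentially the same route as the paper: both reduce the statement to the identity $y_2 = q_1^{1/n}\sum_{l}\tfrac{\tau_2^l}{l!}\,n^{\mathcal{X}}_{1,l,\beta_{1/n}}([\mathrm{pt}]_L;\mathbf{1}_{1/n},\ldots,\mathbf{1}_{1/n})$ by applying Theorem~\ref{THM_open_closed} to convert the open invariants into the closed invariants $\langle[\mathrm{pt}]_{\mathcal{X}},\mathbf{1}_{1/n},\ldots,\mathbf{1}_{1/n}\rangle^{\mathcal{X}}_{0,l+1,\bar\beta}$ and then reading this off from the $H^0$-part of the $1/z^2$-coefficient of $I_\mathcal{X}$ via Theorem~\ref{closed_mirror_thm}. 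The only difference is cosmetic: the paper keeps the sum over $l\geq 0$ throughout (so the $l=0$ terms, both zero, cancel automatically), whereas you start $F(\tau_2)$ at $l=1$ and argue separately that the $l=0$ contributions vanish.
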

\begin{proof}
The Lagrangian Floer superpotential of $\mathcal{X}$ is given by
$$W^{LF}_\mathcal{X}(q)=Z_1+\ldots+Z_{n+1}+\left(\sum_{l \geq 0}\frac{\tau_2^l}{l!} n^\mathcal{X}_{1,l,\beta_{1/n}}([\mathrm{pt}]_L;\mathbf{1}_{1/n},\ldots,\mathbf{1}_{1/n})\right)Z_{n+2},$$
where $Z_j=C_jz^{\bb_j}$ and the coefficients $C_j$ are subject to the constraints
\begin{align*}
C_1\cdots C_n C_{n+1}^n & = q_1,\textrm{ and}\\
C_{n+1}C_{n+2} & = q^{d_1/n}=q_1^{1/n}.
\end{align*}
Letting $W_j=Z_j$ for $j=1,\ldots,n+1$ and
$$W_{n+2}=\left(\sum_{l \geq 0}\frac{\tau_2^l}{l!} n^\mathcal{X}_{1,l,\beta_{1/n}}([\mathrm{pt}]_L;\mathbf{1}_{1/n},\ldots,\mathbf{1}_{1/n})\right)Z_{n+2},$$
we can write $W_j=C_j'z^{\bb_j}$ where now the coefficients $C_j'$ are subject to the constraints
$$C_1'\cdots C_n'(C_{n+1}')^n=C_1\cdots C_n C_{n+1}^n=q_1=y_1,$$
and
\begin{align*}
C_{n+1}'C_{n+2}' & = q_1^{1/n}\left(\sum_{l \geq 0}\frac{\tau_2^l}{l!} n^\mathcal{X}_{1,l,\beta_{1/n}}([\mathrm{pt}]_L;\mathbf{1}_{1/n},\ldots,\mathbf{1}_{1/n})\right)\\
& = q_1^{1/n}\left(\sum_{l \geq 0}\frac{\tau_2^l}{l!}
\langle [\mathrm{pt}]_{\mathcal{X}},\mathbf{1}_{1/n},\ldots,\mathbf{1}_{1/n}\rangle^\mathcal{X}_{0,l+1,\bar{\beta}}\right)\\
& = y_2
\end{align*}
This shows that $W^{LF}_\mathcal{X}(q)=W^{HV}_\mathcal{X}(y(q))$.
\end{proof}

To get explicit numbers, let $f$ be the inverse function of $g$, so that $y_1^{-1/n}y_2 = f(\tau_2)$. Thus
$$f(\tau_2) = \sum_{l\geq0}\frac{\tau_2^l}{l!}n^\mathcal{X}_{1,l,\beta}([\mathrm{pt}]_L;\mathbf{1}_{1/n},\ldots,\mathbf{1}_{1/n}).$$

For $n = 2$, the inverse function $f(\tau_2)$ is simply $2 \sin \tau_2/2$.
Hence
$$
\sum_{l\geq0}\frac{\tau_2^l}{l!}n_{1,l,\beta}^\mathcal{X}([\mathrm{pt}]_L;\mathbf{1}_{1/n},\ldots,\mathbf{1}_{1/n})= 2 \sin \tau_2/2 = \sum_{j\geq 0} \frac{(-1)^j \tau_2^{2j+1}}{(2j+1)! 2^{2j}}
$$
and we get
$$
n^\mathcal{X}_{1,l,\beta}([\mathrm{pt}]_L;\mathbf{1}_{1/2},\ldots,\mathbf{1}_{1/2}) = \left\{
\begin{array}{ll}
0 & \textrm{ when } l \textrm{ is even;} \\
\frac{(-1)^j}{2^{2j}} & \textrm{ when } l=2j+1 \textrm{ for } j \in \Z_{\geq 0}.
\end{array}
\right.
$$

For $n=3$, one may compute the Taylor series expansion of the inverse function $f$ and obtain:
\begin{align*}
n^\mathcal{X}_{1,l,\beta}([\mathrm{pt}]_L;\mathbf{1}_{1/3},\ldots,\mathbf{1}_{1/3})=
\left\{\begin{array}{cl}
0 & \textrm{ when } l \not\equiv 1 \mod 3; \\
1 & \textrm{ when } l=1;\\ \vspace{1pt}
\frac{1}{27} & \textrm{ when } l=4;\\ \vspace{1pt}
\frac{29}{729} & \textrm{ when } l=7;\\ \vspace{1pt}
\frac{6607}{19683} & \textrm{ when } l=10;\\ \vspace{1pt}
\frac{4736087}{531441} & \textrm{ when } l=13;\\ \vspace{1pt}
\frac{7710586801}{14348907} & \textrm{ when } l=16;\\ \vspace{1pt}
\vdots & \hspace{10pt} \vdots
\end{array}
\right.
\end{align*}

\subsection{Open CRC}

By the results of \cite{CLLT11}, \cite{CLLT12} (see also \cite{CLT11}), the open mirror theorem (Theorem \ref{open_mirror_thm_Y}) is true for the semi-Fano toric manifold $Y=\proj(K_{\proj^{n-1}}\oplus\mathcal{O}_{\proj^{n-1}})$ {\em without} any convergence assumption. By our discussion in Subsection \ref{closedCRC_vs_openCRC}, the open CRC (Conjecture \ref{open_CRC}) would then follow from the existence of an analytic continuation of the mirror map for $Y$, which in turn is implied by the existence of the symplectic transformation $\mathbb{U}$ that appeared in the closed CRC (see Theorem \ref{closedCRC_implies_openCRC}). Using Mellin-Barnes integrals \cite{BH06}, one can indeed show that the analytic continuation of the mirror map for $Y$ exists, hence proving Conjecture \ref{open_CRC} for $\mathcal{X}=\proj(1,\ldots,1,n)$ and $Y=\proj(K_{\proj^{n-1}}\oplus\mathcal{O}_{\proj^{n-1}})$.


The mirror map for $Y$ is given by
\begin{align*}
Q_1 & = U_1\exp(nH(U_1)),\\
Q_2 & = U_2\exp(-H(U_1)),
\end{align*}
where $H=H(z)$ is the function
\begin{equation*}
H(z)=\sum_{k\geq1}(-1)^{kn}\frac{(kn-1)!}{(k!)^n}z^k.
\end{equation*}
Here $Q=(Q_1,Q_2)$ are coordinates on the $A$-model moduli space $\CM_A^Y:=H^2(Y;\C^*)$ of $Y$ and $U=(U_1,U_2)$ are coordinates on the $B$-model moduli $\CM_B^Y:=H^2(Y;\C^*)$ of $Y$.

On the other hand, as shown in the previous subsection, the mirror map for $\mathcal{X}$ is given by
\begin{align*}
q_1 & = y_1,\\
\tau_2 & = g(y_1^{-1/n}y_2),
\end{align*}
where $g=g(z)$ is the function
\begin{align*}
g(z) & = \sum_{k=0}^\infty \frac{((-\frac{1}{n})(-\frac{1}{n}-1)\cdots(-\frac{1}{n}-(k-1)))^n}{(kn+1)!}z^{kn+1}\\
& = \sum_{k=0}^\infty \frac{(-1)^{kn}}{(kn+1)!}\left(\frac{\Gamma(k+1/n)}{\Gamma(1/n)}\right)^n z^{kn+1},
\end{align*}
where $(\tau_1=\log q_1,\tau_2)$ are coordinates on the $A$-model moduli space $\CM_A^\mathcal{X}$ of $\mathcal{X}$ and $y=(y_1,y_2)$ are coordinates on the $B$-model moduli $\CM_B^\mathcal{X}$ of $\mathcal{X}$.

The B-model moduli spaces $\CM_B^Y$ and $\CM_B^\mathcal{X}$ can be glued together using the secondary fan for $\mathcal{X}$ which is spanned by the vectors $D_1=(1,0)$, $D_{n+1}=(n,1)$ and $D_{n+2}=(0,1)$. The vectors $D_1, D_{n+1}$ are dual to the coordinates $U_1,U_2$ on $\CM_B^Y$. Let $\eta_1,\eta_2$ be the coordinates  on $\CM_B^\mathcal{X}$ dual to the vectors $D_{n+2},D_{n+1}$. Then these two coordinate systems are related by
\begin{equation*}
\eta_1=U_1^{-1/n},\ \eta_2=U_1^{1/n}U_2;
\end{equation*}
or
\begin{equation*}
U_1=\eta_1^{-n},\ U_2=\eta_1\eta_2.
\end{equation*}
Since the coordinates $\eta_1$, $\eta_2$ correspond to the generators $d_2-d_1/n$ and $d_1/n$ of $\mathbb{K}_\mathrm{eff}$ respectively, they are related to the original coordinates $y_1$, $y_2$ (which correspond to $d_1$, $d_2$ respectively) by
\begin{equation*}
\eta_1=y_1^{-1/n}y_2,\ \eta_2=y_1^{1/n};
\end{equation*}
or
\begin{equation*}
y_1=\eta_2^n,\ y_2=\eta_1\eta_2.
\end{equation*}
Altogether, the coordinate systems $(y_1,y_2)$ on $\CM_B^\mathcal{X}$ and $(U_1,U_2)$ on $\CM_B^Y$ are related by
\begin{equation*}
y_1=U_1U_2^n,\ y_2=U_2;
\end{equation*}
or
\begin{equation*}
U_1=y_1y_2^{-n},\ U_2=y_2.
\end{equation*}

Using Mellin-Barnes integral (see \cite{BH06}), one can analytically continue the function $\log Q_1(U_1)$ from places where $|U_1|$ is small to places where $|U_1|$ is large (and hence $|\eta_1=y_1^{-1/n}y_2|$ is small) to obtain the function $\Lambda(y)$. The results are as follows:

When $n$ is even, $\log Q_1$ is analytically continued to
\begin{align*}
\sum_{l=1}^{n-1}\frac{(-1)^l\pi e^{-l\pi\mathbf{i}/n}}{\Gamma(1-l/n)^n\sin(l\pi/n)} \sum_{k\geq0}\frac{(-1)^{nk}}{(nk+l)!}\left(\frac{\Gamma(k+l/n)}{\Gamma(l/n)}\right)^n(y_1^{-1/n}y_2)^{nk+l},
\end{align*}
while $\log Q_2=\frac{1}{n}(\log y_1-\log Q_1)$. When $n=2$, by choosing a suitable branch cut, this gives
\begin{align*}
\log Q_1 & = -\mathbf{i}(\pi-g(y_1^{-1/2}y_2)),\\
\log Q_2 & = \frac{1}{2}\log y_1+\frac{\mathbf{i}}{2}(\pi-g(y_1^{-1/2}y_2)),
\end{align*}
which yields the change of variables:
$$Q_1=\conste^{-\mathbf{i}(\pi-\tau_2)},\ Q_2=q_1^{1/2}\conste^{\mathbf{i}(\pi-\tau_2)/2}.$$
Since this is an affine linear change of coordinates, it preserves the flat structures in the neighborhoods $U_\mathcal{X}$ and $U_Y$ near the large radius limit points corresponding to $\mathcal{X}=\proj(1,1,2)$ and $Y=\proj(K_{\proj^1}\oplus\mathcal{O}_{\proj^1})=\mathbb{F}_2$. As shown in \cite{CIT09}, the Frobenius manifolds defined by the genus 0 Gromov-Witten theory for $\mathcal{X}$ and $Y$ are in fact isomorphic, and this is due to the fact that the weighted projective plane $\mathcal{X}=\proj(1,1,2)$ satisfies the Hard Lefschetz condition.

When $n$ is odd, $\log Q_1$ is analytically continued to
\begin{align*}
\sum_{l=1}^{n-1}\frac{(-1)^l\pi}{\Gamma(1-l/n)^n\sin(l\pi/n)} \sum_{k\geq0}\frac{(-1)^{nk}}{(nk+l)!}\left(\frac{\Gamma(k+l/n)}{\Gamma(l/n)}\right)^n(y_1^{-1/n}y_2)^{nk+l},
\end{align*}
while $\log Q_2=\frac{1}{n}(\log y_1-\log Q_1)$. In particular, the flat structures near the large radius limit points for $\mathcal{X}$ and $Y$ are not preserved. When $n=3$, this is given by
$$-\frac{2\sqrt{3}\pi}{3\Gamma(\frac{2}{3})^3}g(y_1^{-1/3}y_2)+\frac{2\sqrt{3}\pi}{3\Gamma(\frac{1}{3})^3}
\sum_{k\geq0}\frac{(-1)^{k}}{(3k+2)!}\left(\frac{\Gamma(k+\frac{2}{3})}{\Gamma(\frac{2}{3})}\right)^3(y_1^{-1/3}y_2)^{3k+2}$$
which agrees with the results in \cite[Section 3.9]{CIT09}.

As an immediate consequence of the existence of analytic continuation of the mirror map for $Y$, we have
\begin{theorem}\label{open_CRC_wPn}
The open crepant resolution conjecture (i.e. Conjecture \ref{open_CRC}) holds for the weighted projective space $\mathcal{X}=\proj(1,\ldots,1,n)$ and its crepant resolution $Y=\proj(K_{\proj^{n-1}}\oplus\mathcal{O}_{\proj^{n-1}})$.
\end{theorem}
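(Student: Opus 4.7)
The plan is to follow the diagram given in the introduction: we have already established the open mirror theorem for $\mathcal{X}=\proj(1,\ldots,1,n)$ (Theorem \ref{open_mirror_thm_wPn}) via the open/closed equality (Theorem \ref{THM_open_closed}) together with the closed mirror theorem of Coates--Corti--Iritani--Tseng, and the open mirror theorem for $Y=\proj(K_{\proj^{n-1}}\oplus\mathcal{O}_{\proj^{n-1}})$ is supplied by Theorem \ref{open_mirror_thm_Y} of \cite{CLLT11,CLLT12}. Thus we already know
\begin{equation*}
W^{LF}_\mathcal{X}(q)=W^{HV}_\mathcal{X}(y(q)),\qquad W^{LF}_Y(Q)=W^{HV}_Y(U(Q)),
\end{equation*}
so it suffices to identify $W^{HV}_\mathcal{X}$ and $W^{HV}_Y$ over a common patch of the $B$-model moduli and then patch the mirror maps.

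First, I would glue the two $B$-model moduli spaces $\CM^\mathcal{X}_B$ and $\CM^Y_B$ using the secondary fan of the stacky data, which in this example is spanned by $D_1=(1,0)$, $D_{n+1}=(n,1)$ and $D_{n+2}=(0,1)$. The two cones $\langle D_1,D_{n+1}\rangle$ and $\langle D_{n+1},D_{n+2}\rangle$ correspond to the large complex structure cusps for $Y$ and $\mathcal{X}$ respectively, and the change of coordinates between the dual bases is the explicit monomial transformation
\begin{equation*}
y_1=U_1U_2^n,\quad y_2=U_2 \qquad (\text{equivalently } U_1=y_1y_2^{-n},\ U_2=y_2).
\end{equation*}
Because $W^{HV}_\mathcal{X}$ and $W^{HV}_Y$ are both determined by the same set of stacky vectors $\{\bb_1,\ldots,\bb_{n+2}\}$ through the constraints in Definitions \ref{W^HV} and \ref{W^{HV}_Y}, one checks directly from these constraints that $W^{HV}_\mathcal{X}(y)=W^{HV}_Y(U)$ under this substitution, i.e.\ the Hori--Vafa LG model extends to a single holomorphic family on a connected open set of $\mathbb{L}^\vee\otimes\C^*$ containing both cusps.

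Next, I would analytically continue the mirror map $U\mapsto Q(U)$ for $Y$. The coordinates of this mirror map, namely $\log Q_1(U)=\log U_1+nH(U_1)$ and $\log Q_2(U)=\log U_2-H(U_1)$ with $H(z)=\sum_{k\geq1}(-1)^{kn}\tfrac{(kn-1)!}{(k!)^n}z^k$, are hypergeometric and admit analytic continuation along paths into the cusp for $\mathcal{X}$ by the Mellin--Barnes integral method of Borisov--Horja \cite{BH06}. Explicitly, choosing appropriate branch cuts as outlined in the paragraphs preceding the theorem, one continues $\log Q_1(U_1)$ from $|U_1|$ small to $|U_1|$ large (i.e.\ $|y_1^{-1/n}y_2|$ small), expressing it as a convergent power series in $y_1^{-1/n}y_2$ (with a logarithmic part in $y_1$ for $\log Q_2$). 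Composing this analytic continuation with the inverse mirror map $y=y(q)$ for $\mathcal{X}$ and the coordinate change $(U_1,U_2)=(y_1y_2^{-n},y_2)$ produces the required holomorphic map $Q=Q(q)$ on $(\Delta(\epsilon)-\R_{\leq0})^l$.

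Finally, to conclude the theorem I invoke Theorem \ref{closedCRC_implies_openCRC} in the form needed here: granted both open mirror theorems, the equality $W^{HV}_\mathcal{X}(y)=W^{HV}_Y(U)$ under the coordinate change above, and the analytic continuation of the mirror map for $Y$ just constructed, the chain
\begin{equation*}
W^{LF}_Y(Q(q))=W^{HV}_Y(U(Q(q)))=W^{HV}_\mathcal{X}(y(q))=W^{LF}_\mathcal{X}(q)
\end{equation*}
gives the open CRC. The main obstacle in this plan is the analytic continuation step: one must verify that the Mellin--Barnes contour shifts converge in the relevant region, that the resulting multi-valued function admits a single-valued branch on $(\Delta(\epsilon)-\R_{\leq0})^l$ once the cuts are chosen, and that under this branch the coefficients of the Laurent series $W^{LF}_Y$ simultaneously converge so that $W^{LF}_Y(Q(q))$ makes sense as a holomorphic family near $q=0$. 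Convergence of $W^{LF}_Y$ near its own cusp comes free from \cite{CLLT12}, while convergence of $W^{LF}_\mathcal{X}$ near its cusp was already established via the $I$-function computation preceding Theorem \ref{open_mirror_thm_wPn}; the nontrivial point is that these two domains of convergence can be bridged by the Mellin--Barnes continuation, which is precisely what the Borisov--Horja analysis provides in the toric setting.
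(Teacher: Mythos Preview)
Your proposal is correct and follows essentially the same route as the paper: combine the open mirror theorem for $\mathcal{X}$ (Theorem \ref{open_mirror_thm_wPn}) with that for $Y$ (Theorem \ref{open_mirror_thm_Y}), glue the $B$-model moduli spaces via the secondary fan using the monomial change $y_1=U_1U_2^n$, $y_2=U_2$, analytically continue the mirror map for $Y$ by Mellin--Barnes integrals \`a la Borisov--Horja, and then invoke Theorem \ref{closedCRC_implies_openCRC}. The paper carries out exactly these steps, including the explicit continuation formulas for $\log Q_1$ in the even and odd $n$ cases that you allude to.
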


\bibliographystyle{amsplain}
\bibliography{geometry}

\end{document}